\newtheorem{theo}{Theorem}[section]
\newtheorem{prop}[theo]{Proposition}
\newtheorem{coro}[theo]{Corollary}
\newtheorem{lemm}[theo]{Lemma}
\theoremstyle{definition}
\newtheorem{def1}[theo]{Definition}
\theoremstyle{remark}
\newtheorem{rema}[theo]{Remark}
\newcommand{\nwc}{\newcommand}
\nwc{\eps}{\epsilon}
\nwc{\ep}{\epsilon}
\nwc{\vareps}{\varepsilon}
\nwc{\Oph}{\operatorname{Op}_\hbar}
\nwc{\la}{\langle}
\nwc{\ra}{\rangle}
\nwc{\mf}{\mathbf} 
\nwc{\blds}{\boldsymbol} 
\nwc{\ml}{\mathcal} 
\nwc{\defeq}{\stackrel{\rm{def}}{=}}
\nwc{\cE}{\ml{E}}
\nwc{\cN}{\ml{N}}
\nwc{\cO}{\ml{O}}
\nwc{\cP}{\ml{P}}
\nwc{\cU}{\ml{U}}
\nwc{\cV}{\ml{V}}
\nwc{\cW}{\ml{W}}
\nwc{\tU}{\widetilde{U}}
\nwc{\IN}{\mathbb{N}}
\nwc{\IR}{\mathbb{R}}
\nwc{\IZ}{\mathbb{Z}}
\nwc{\IC}{\mathbb{C}}
\nwc{\IT}{\mathbb{T}}
\nwc{\IS}{\mathbb{S}}
\nwc{\tP}{\widetilde{P}}
\nwc{\tPi}{\widetilde{\Pi}}
\nwc{\tV}{\widetilde{V}}
\nwc{\supp}{\operatorname{supp}}
\nwc{\rest}{\restriction}
\begin{document}

\title[Poincar\'e series and linking of Legendrian knots]{Poincar\'e series and linking of Legendrian knots}

\author[Nguyen Viet Dang]{Nguyen Viet Dang}

\address{Sorbonne Universite, IMJ-PRG, 75252 Paris Cedex 05, France}
\address{Institut Universitaire de France, Paris, France}

\email{dang@imj-prg.fr}

\author[Gabriel Rivi\`ere]{Gabriel Rivi\`ere}

\address{Laboratoire de math\'ematiques Jean Leray (UMR CNRS 6629), Universit\'e de Nantes, 2 rue de la Houssini\`ere, BP92208, 44322 Nantes C\'edex 3, France}

\address{Institut Universitaire de France, Paris, France}

\email{gabriel.riviere@univ-nantes.fr}

\begin{abstract} 
On a negatively curved surface,
we show that the Poincar\'e series counting geodesic arcs orthogonal to some pair of closed geodesic curves has a meromorphic continuation to the whole complex plane. When both curves are homologically trivial, we prove that the Poincar\'e series has an explicit rational value at $0$ interpreting it in terms of linking number of Legendrian knots. 
In particular, for any pair of points on the surface, the lengths of all geodesic arcs connecting the two points determine its genus, and, for any pair of homologically trivial closed geodesics, the lengths of all geodesic arcs orthogonal to both geodesics determine the linking number of the two geodesics. 
\end{abstract}

\maketitle


\section{Introduction}

Let $(X,g)$ be a smooth ($\mathcal{C}^{\infty}$), oriented, connected, closed Riemannian surface and which has \emph{negative curvature}. Given a nontrivial homotopy class $\mathbf{c}\in\pi_1(X)$, one can find a unique oriented geodesic $c$ (parametrized by arc length) in the conjugacy class of $\mathbf{c}$~\cite[\S 3.8]{Kl}. 
Similarly, any point $c\in X$ will be understood in the following as a closed geodesic representing the trivial homotopy class in $\pi_1(X)$. When the closed geodesic $c$ is embedded, we say that the geodesic is simple (including the case of a point).

A classical problem in Riemannian geometry consists in studying the lengths of the geodesic arcs joining two closed geodesics $c_1$ and $c_2$ which are primitive\footnote{It means that either the homotopy class $\mathbf{c}_i$ is trivial in $\pi_1(X)$, or the equation $\mathbf{c}^p=\mathbf{c}_i$ has no solution for every $p>1$. We will implicitely suppose this all along the article. The general case would follow from this case anyway.} in $X$. More precisely, for $T>0$, we denote by $\mathcal{N}_T(c_1,c_2)\in[0,+\infty]$ the number of geodesics $\gamma$ of length $0<\ell(\gamma)\leqslant T$ (parametrized by arc length) that join $c_1$ to $c_2$ and that are \emph{directly orthogonal} to $c_1$ and $c_2$. In that framework and when $c_1,c_2$ are points, Margulis proved, using purely dynamical methods~\cite{Mar69, Mar}, the existence of $A_{c_1,c_2}>0$ such that
\begin{equation}\label{e:margulis}
 \ml{N}_T(c_1,c_2)\sim A_{c_1,c_2}e^{Th_{\text{top}}},\quad \text{as}\quad T\rightarrow+\infty,
\end{equation}
where $h_{\text{top}}>0$ is the topological entropy of the geodesic flow. See also~\cite{Del42, Hub56, Hub} for earlier results of Delsarte and Huber in constant negative curvature using the spectral decomposition of the Laplace-Beltrami operator. This asymptotic formula was further generalized by Pollicott in the framework of Axiom A dynamical systems~\cite{Po}. Parkkonen and Paulin showed that~\eqref{e:margulis} remains true when $\mathbf{c}_1$ and $\mathbf{c}_2$ are any elements in $\pi_1(X)$ and when $X$ is not necessarily compact~\cite{ParPau17}. For smooth compact Riemannian manifolds without any assumption on their curvature and when $c_1$ and $c_2$ are points, Ma\~{n}\'e proved~\cite{Ma} that
$$\lim_{T\rightarrow+\infty}\frac{1}{T}\log\int_{X\times X}\ml{N}_T(c_1,c_2)d\text{vol}_g(c_1)d\text{vol}_g(c_2)=h_{\text{top}},$$
where $\text{vol}_g$ is the Riemannian volume induced by $g$ -- see also~\cite{Pa92, BuPa, PaPa, Pa00}. 

In this work, we shall focus on the case of negatively curved surfaces as in the works of Margulis. Recall that his strategy consisted in relating the study of these asymptotics with the mixing properties of the measure maximizing the Kolmogorov-Sinai entropy, the so-called Bowen-Margulis measure. We refer to the review of Parkkonen and Paulin for more details on these questions and for some recent developments~\cite{ParPau16}. In particular, estimates on the size of the remainder in~\eqref{e:margulis} can be derived~\cite[Th.~27]{ParPau17} from quantitative estimates on the rate of mixing of the Bowen-Margulis measure~\cite{Ra, Mo, Do, GLP, GoSto}. Such quantitative estimates can be obtained
for instance from the spectral analysis of transfer operators on appropriate Banach spaces of currents~\cite{Do, GLP, GoSto} which is also referred to as the study of Pollicott-Ruelle resonances. In the \emph{hyperbolic} case, estimates on the size of the remainder were previously obtained using the spectral decomposition of the Laplacian~\cite{Patt75, Gun80, Sel70, Goo}.

Instead of searching for improvements on the size of the remainder in~\eqref{e:margulis}, the aim of this work is to study more specifically the zeta renormalization of $\ml{N}_T(c_1,c_2)$:
$$\forall s\in\mathbb{C},\quad\ml{N}_T(c_1,c_2,s):=\sum_{\gamma\in\mathcal{P}_{c_1,c_2}:0<\ell(\gamma)\leqslant T}e^{-s\ell(\gamma)},$$
where $\mathcal{P}_{c_1,c_2}$ denotes the set of geodesic arcs $\gamma$ joining $c_1$ and $c_2$ and directly orthogonal\footnote{In other words, $\gamma^\prime(0)\perp T_{\gamma(0)}c_1$, $\gamma^\prime(\ell)\perp T_{\gamma(\ell)}c_2$ and $\gamma^\prime(0)\wedge  c_1'(\gamma(0))$, $\gamma^\prime(\ell)\wedge  c_2'(\gamma(\ell))$ have direct orientations.} to $c_1$ and $c_2$. Note that $\ml{N}_T(c_1,c_2,0)=\ml{N}_T(c_1,c_2)$. 

\subsection{Meromorphic continuation}

Thanks to~\eqref{e:margulis}, we can define, in the region $\text{Re}(s)>h_{\text{top}}$, the \emph{generalized} Poincar\'e series\footnote{This is just the Laplace transform of the measure $\sum_{\gamma\in\mathcal{P}_{c_1,c_2}:\ell(\gamma)>0}\delta_0(t-\ell(\gamma)).$}
\begin{equation}\label{e:poincare}
 \ml{N}_\infty(c_1,c_2,s):=\lim_{T\rightarrow+\infty}\ml{N}_T(c_1,c_2,s)=\sum_{\gamma\in\mathcal{P}_{c_1,c_2}:\ell(\gamma)>0}e^{-s\ell(\gamma)}.
\end{equation}
This defines a holomorphic function in the region $\text{Re}(s)>h_{\text{top}}$ and we will first prove the following result:
\begin{theo}\label{t:meromorphic} Let $(X,g)$ be a smooth ($\mathcal{C}^{\infty}$),
closed, oriented, connected, Riemannian surface which has negative curvature. Then, for every closed geodesics $(c_1,c_2)$ (including the case of points), the holomorphic function 
$$s\in\{w:\operatorname{Re}(w)>h_{\operatorname{top}}\}\mapsto \ml{N}_\infty(c_1,c_2,s)\in \mathbb{C}$$ 
has a meromorphic continuation to $\IC$.
\end{theo}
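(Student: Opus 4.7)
The plan is to realize the Poincaré series as a resolvent pairing for the geodesic flow on the unit tangent bundle $SX$ and then invoke the meromorphic continuation of Pollicott--Ruelle resonances on anisotropic Sobolev spaces (Faure--Sjöstrand, Dyatlov--Zworski).

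I first lift everything to $SX$, a $3$-dimensional contact manifold carrying the Anosov geodesic flow $\phi_t$ generated by the geodesic vector field $X$. To each geodesic representative $c_i$, associate the circle $N_+c_i \subset SX$ of unit vectors directly orthogonal to $c_i$ with the orientation convention fixed in the introduction; when $\mathbf{c}_i$ is trivial and $c_i$ is a point, this is the full fibre $S_{c_i}X$. In both cases $N_+c_i$ is an embedded Legendrian knot for the canonical contact form $\alpha$ on $SX$, and the arcs of $\mathcal{P}_{c_1,c_2}$ are in bijection with the pairs $(t,v) \in (0,\infty) \times N_+c_1$ such that $\phi_t(v) \in N_+c_2$, each intersection being transverse thanks to the Anosov property.

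Let $[N_+c_i]$ denote the current of integration along $N_+c_i$. The convergence guaranteed by \eqref{e:margulis} then yields, for $\operatorname{Re}(s) > h_{\operatorname{top}}$,
$$\ml{N}_\infty(c_1,c_2,s) = \int_0^\infty e^{-st} \langle [N_+c_1], \phi_t^*[N_+c_2] \rangle_\alpha\, dt = \langle [N_+c_1], R(s)[N_+c_2] \rangle_\alpha,$$
where $\langle\cdot,\cdot\rangle_\alpha$ is the geometric pairing obtained by wedging one of the two $1$-currents with $\alpha$ to produce a $2$-current and integrating against the other, and
$$R(s) := \int_0^\infty e^{-st}\,\phi_t^*\,dt = (s - \ml{L}_X)^{-1}$$
is the Laplace transform of the flow, i.e.\ a resolvent of the Lie derivative. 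Transverse intersection theory ensures that each arc contributes $+1$ to the pairing, recovering the series on the initial half-plane.

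It remains to continue $s \mapsto R(s)$ meromorphically and to check that the pairing survives. The first point is the theorem of Faure--Sjöstrand / Dyatlov--Zworski: $R(s)$ extends meromorphically from $\{\operatorname{Re}(s) > h_{\operatorname{top}}\}$ to all of $\IC$ as an operator between anisotropic Sobolev spaces adapted to the Anosov splitting $T(SX) = E_s \oplus \IR X \oplus E_u$, with poles at the Pollicott--Ruelle resonances. The main obstacle is the second point: showing that $\langle [N_+c_1], R(s)[N_+c_2]\rangle_\alpha$ still makes sense after $R(s)$ has been extended to act between such weighted spaces. This is a wavefront-set verification. One has $\operatorname{WF}([N_+c_i]) = N^*(N_+c_i) \setminus 0$, and the Legendrian condition $T(N_+c_i) \subset \ker \alpha = E_s \oplus E_u$, combined with the fact that the compact submanifold $N_+c_i$ cannot coincide with a (non-compact) stable or unstable leaf, guarantees that $N^*(N_+c_i)$ misses the conormal directions $E_s^*$ and $E_u^*$ governing the anisotropic regularity. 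With the escape function chosen so that $[N_+c_2]$ lies in the source space of the extended $R(s)$ and $[N_+c_1] \wedge \alpha$ defines a continuous linear form on the target space, the pairing inherits the meromorphic extension of $R(s)$, yielding the desired meromorphic continuation of $\ml{N}_\infty(c_1,c_2,s)$ on all of $\IC$.
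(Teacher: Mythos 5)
Your overall architecture is the one the paper follows (resolvent pairing plus Faure--Sj\"ostrand/Dyatlov--Zworski continuation), but the step you describe as ``a wavefront-set verification'' contains a genuine gap, and it is precisely the step where all the work lies. The condition you invoke --- that $N^*(N_+c_i)$ misses $E_s^*$ and $E_u^*$ --- is not the relevant one. The obstruction sits in the flow direction: for a fibre $S_q^*X$ the conormal is $\IR e_0^*\oplus\IR e_1^*$, which \emph{contains} $E_0^*=\IR\alpha$, and the same happens for the conormal of a closed geodesic. Since elements of the anisotropic space and of its dual both have negative regularity near $E_0^*$, the pairing $\la [N_+c_1],R(s)[N_+c_2]\ra$ does not a priori make sense even for $\operatorname{Re}(s)$ large, and no choice of escape function fixes this. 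What rescues the argument is finer structure that your proof never uses: (i) one must contract one of the currents with $\iota_V$ (note that wedging a Legendrian integration current with $\alpha$, as you propose, gives zero, and the degrees in your pairing do not match in any case); (ii) one must first propagate the two knots by the flow for a time $T_0$ so that their wavefront sets enter small cones around $E_s^*\oplus E_0^*$ and $E_u^*\oplus E_0^*$ respectively and, crucially, so that their \emph{supports become disjoint} --- otherwise the diagonal part $\Delta(T^*M)$ of $\operatorname{WF}'(R(s))$ is hit; (iii) one must use the precise description of $\operatorname{WF}'(R(s))$ as $\Delta(T^*M)\cup\Omega_+\cup E_u^*\times E_s^*$, where the propagating set $\Omega_+$ only carries covectors annihilating $V$, which is why the contraction $\iota_V$ in (i) is not cosmetic. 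Without these three ingredients the pairing is not defined.

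Two further points are asserted rather than proved. First, identifying the pairing with the series on $\{\operatorname{Re}(s)>h_{\operatorname{top}}\}$ is not just ``transverse intersection theory'': one needs a local counting lemma expressing $\sum_x\epsilon_t(x)\,\delta(t-\tau)$ as $\int[N_1]\wedge\iota_Y\varphi_Y^{-t*}[N_2]$, a verification (via the Ricatti/Jacobi description, which uses negative curvature) that the orientation signs $\epsilon_t(x)$ are constant, and a time-truncation plus mollification-in-time argument (the operator $A_\chi$, which gains regularity in the flow direction, combined with the Hille--Yosida bound on the anisotropic space) to legitimately pass from compactly supported test functions in $t$ to $e^{-st}\mathbf{1}_{\IR_+}$ and hence to the resolvent. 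Second, the transversality of $\varphi^{-t}(N_+c_1)$ with $N_+c_2$ is not a consequence of the Anosov property alone: for conormals of closed geodesics it requires $T(N_+c_i)$ to be transverse to the weak unstable/stable foliations, which holds here only because the unstable Ricatti solution is positive on a negatively curved surface. These are fixable, but as written the proof does not establish that the analytically continued object coincides with $\ml{N}_\infty(c_1,c_2,s)$, nor that it is defined at all.
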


Our proof will use the spectral properties of transfer operators for uniformly hyperbolic flows developed by many authors over the last fifteen years~\cite{BL07, BL13, Ts10, FaSj, Ts12, GLP, DFG, DyZw13, FaTs, DyGu, GBWe17, Je19}. More precisely, we will interpret this Poincar\'e series in terms of a certain spectral resolvent applied to the conormal cycle of $c_1$ and $c_2$. Then, we will derive this theorem from the meromorphic continuation of this spectral resolvent. Our proof allows to encompass the case of much more general Anosov flows and Poincar\'e series (see Appendix~\ref{a:anosov}) but we limit ourselves to this simplified version for the introduction. As a byproduct of this argument, we will verify that the poles of this meromorphic continuation are included in the set of Pollicott-Ruelle resonances for currents of degree $1$. This spectral approach is in some sense close in spirit to what is done when proving the meromorphic continuation of dynamical zeta functions~\cite{Ba}. For instance, Giulietti-Liverani-Pollicott~\cite{GLP} and Dyatlov-Zworski~\cite{DyZw13} showed by spectral methods the meromorphic continuation of the Ruelle zeta function
\begin{equation}\label{e:ruelle}\zeta_{\text{Ruelle}}(s)=\prod_{\gamma\in\mathcal{P}}\left(1-e^{-s\ell(\gamma)}\right)^{-1},\end{equation}
where $\mathcal{P}$ denotes the set of primitive \emph{closed} geodesics. See also~\cite{Rue, Ru, Fr} for earlier results of Ruelle, Rugh and Fried in the analytic case, \cite{Ba} for a detailed account of Baladi in the case of Axiom A diffeomorphisms or~\cite{FaTs, Go} for the semiclassical zeta function of Faure--Tsujii. For all these other zeta functions, the meromorphic continuation was also established by spectral methods and their zeroes and poles were related to the Pollicott-Ruelle resonances on anisotropic spaces of currents as it is the case here.

While there are many results on Ruelle zeta functions, there are not so many works on the meromorphic continuation of Poincar\'e series. The only results we are aware of concern \emph{hyperbolic} manifolds where one can connect Poincar\'e series to a certain spectral resolvent of the Laplacian -- see for instance~\cite[Satz A]{Hub56},~\cite[Satz 2]{Hub} or~\cite[Lemme~3.1]{Gu}. Yet, such a correspondence is not available for general negatively curved manifolds and one has to work directly with the dynamical problem as we shall do here. The only dynamical proof of a meromorphic continuation of Poincar\'e series we are aware of is due to Paternain~\cite[p.~138]{Pa00} in the case where $(X,g)$ is hyperbolic. Under these assumptions, he proved by purely geometrical arguments that
\begin{equation}\label{e:paternain}\lim_{T\rightarrow+\infty}\int_{X\times X}\ml{N}_T(c_1,c_2,s)d\text{vol}_g(c_1)d\text{vol}_g(c_2)=\frac{4\pi^2\chi(X)}{1-s^2},\end{equation}
where $c_1,c_2$ are points and $\chi(X)$ is the Euler characteristic of $X$. He obtained this formula by interpreting this integrated Poincar\'e series via a convenient coarea formula -- see also~\cite{Ma, PaPa} for earlier related results. In some sense, our proof of the meromorphic continuation will use similar ideas but with the addition of the spectral analysis of Anosov flows to compensate the absence of simplifications due to constant curvature and to the integration over $X\times X$. Note that as a corollary of this result, of Theorem~\ref{t:meromorphic} and of Proposition~\ref{p:value-at-0} below, we recover the Euler characteristic of an \emph{hyperbolic surface} as a special value of Poincar\'e series:
\begin{equation}\label{e:paternain2}\boxed{\int_{X\times X}\ml{N}_{\infty}(c_1,c_2,0)d\text{vol}_g(c_1)d\text{vol}_g(c_2)=4\pi^2\chi(X).}\end{equation}
We will now show how to generalize this formula via our spectral approach.

\subsection{Value at $0$}

In a series of recent works, it was observed by Dyatlov-Zworski~\cite{DyZw} and the authors~\cite{DaRi16, DaRi17c, DaRi17d} that among Pollicott-Ruelle resonances, the one at $0$ plays a special role as its resonant states encode the De Rham cohomology of the manifold 
where the dynamics takes place. See also~\cite{Ha18, KuWe, CePa} for related results of Hadfield, K\"uster-Weich and Ceki\'{c}-Paternain. In the case of geodesic flows on Riemannian surfaces, the resonant states were in some sense computed explicitly by Dyatlov and Zworski~\cite[\S 3]{DyZw}. As a consequence, they proved
$$\boxed{s^{\chi(X)}\zeta_{\text{Ruelle}}(s)|_{s=0}\neq 0,}$$
and thus they generalized earlier results due to Fried in the case of constant curvature~\cite{Fr86}. 

The behaviour at $0$ of Poincar\'e series will be of slightly different nature as we will consider situations where there will be no pole or zero at $s=0$ even if there is a Pollicott-Ruelle resonance at $0$. Despite this and working out on the ideas introduced in~\cite{DaRi16, DaRi17c, DaRi17d, DyZw}, we will verify that the value at $0$ still has a topological meaning and that it is a rational number. To that aim, we 
set
$$\varepsilon(\mathbf{c})=1\quad\text{if $\mathbf{c}$ is trivial in $\pi_1(X)$,\quad and \quad } \varepsilon(\mathbf{c})=-1\quad\text{otherwise,}$$
and  the main result of this article reads:
\begin{theo}\label{t:zero} Let $(X,g)$ be a smooth ($\mathcal{C}^{\infty}$), closed, oriented, connected, Riemannian surface which has negative curvature. Given a pair $c_1$ and $c_2$ of two simple closed geodesics in $X$ which are trivial in homology and such that 
\begin{itemize}
 \item either $\mathbf{c}_1$ and $\mathbf{c}_2$ are distinct nontrivial homotopy classes,
 \item or at least one $c_i$ is a point and $c_1\cap c_2=\emptyset$.
\end{itemize}

Then one has
$$\boxed{\chi(X)\ml{N}_\infty(c_1,c_2,0)\in\IZ.}$$
Moreover, if $X(c_i)$ denotes the compact surface\footnote{When $c_i$ is a point, we take the convention that $X(c_i)=c_i$. When $c_i$ is not a point, $X\setminus c_i$ has two connected components (as $c_i$ is homologically trivial and embedded) and $X(c_i)$ is the closure of the component whose oriented boundary is $c_i$.} whose oriented boundary is given by $c_i$, then one has
\begin{equation}\label{e:main-linking}
\boxed{\ml{N}_\infty(c_1,c_2,0)=\varepsilon(\mathbf{c}_1)\left(\frac{\chi(X(c_1))\chi(X(c_2))}{\chi(X)}-\chi(X(c_1)\cap X(c_2))+\frac{1}{2}\chi(c_1\cap c_2)\right).}
\end{equation}
\end{theo}

Note that as $c_1$ and $c_2$ are homologically trivial, they intersect an even number of times and the contribution $\frac{1}{2}\chi(c_1\cap c_2)$ yields an integer\footnote{When $c_1$ corresponds to $c_2$ with its reverse orientation, one has $X(c_2)=\overline{X(c_1)^c}$ and $X(c_1)\cap X(c_2)=c_1\cap c_2=c_1$ has $0$ Euler characteristic.}.

  When $c_1$ and $c_2$ are \emph{distinct points}, 
we get
  $$\ml{N}_\infty(c_1,c_2,0)=\frac{1}{\chi(X)}.$$
    In fact, as we shall explain it in Remark~\ref{r:sign-anosov} and as it was pointed to us by one of the referee, this last formula remains true in the case of Anosov surfaces without focal points. If we only make the Anosov assumption, then the value at $0$ may differ from an integer depending on the choice of points. Thus, as a corollary of this result, the Euler characteristic of a negatively curved surface $(X,g)$ can be recovered by the set of lengths of the geodesic arcs joining two points of $X$. Still in the case of points, we also observe that we recover Paternain's formula~\eqref{e:paternain2} without integrating over $X$ and that it can be extended to variable curvature as follows:
  $$\int_{X\times X}\ml{N}_{\infty}(c_1,c_2,0)d\text{vol}_g(c_1)d\text{vol}_g(c_2)=\frac{\text{vol}_g(X)^2}{\chi(X)}.$$
 For the sake of illustration, we can also write down the formula when $c_1$ is not reduced to a point but $c_2$ (with $c_2\cap c_1=\emptyset$) is:
 $$\ml{N}_\infty(c_1,c_2,0)=-\frac{\chi(X(c_1))}{\chi(X)}+|X(c_1)\cap c_2|.$$
 
 More generally, this Theorem shows that the value at $0$ of Poincar\'e series is rational under homological assumptions on the homotopy classes we consider. As we shall see, the integer $\chi(X)\ml{N}_\infty(c_1,c_2,0)$ has a clear geometric interpretation if we lift the problem to the unit cotangent bundle. It will \emph{correspond to the linking of two Legendrian knots} given by the unit conormal bundles of the geodesic representatives $c_1$ and $c_2$. See \S\ref{s:morse} for details. In particular, the reason why we need $c_1,c_2$ to be homologically trivial is that their conormal bundles should also be homologically trivial so that they have a well--defined linking number in $S^*X$. The two Theorems above could be extended to Anosov surfaces and to more general simple closed curves on $X$ (not necessarily geodesics) but this requires to make a certain transversality assumption that will be described in~\eqref{eq:transversalitymovingframe}. Note however that, in this generalized case, the set $\{\gamma\in\mathcal{P}_{c_1,c_2}:0<\ell(\gamma)\leq T\}$ may not be finite and one needs to consider Poincar\'e series starting with geodesics arcs having a large enough length. In particular, the value at $0$ may differ by an element in $\IZ$ depending on this choice of minimal length. In the case of geodesics, this assumption is satisfied in the case of Anosov surfaces verifying in some sense a ``strong'' nonfocal point property.
 
 Let us also mention that the reduction to simple geodesics is only required for simplicity of exposition but the case of general closed geodesics can be handled similarly once we have defined properly what we mean for $X(c_i)$ in that case. See Section~\ref{s:intersection} for more details.
  

 The main reason for restricting ourselves to dimension $2$ is due to our spectral interpretation of Poincar\'e series. In particular, their value at $0$ is related to the properties of the eigenspace associated with the Pollicott-Ruelle resonance at $0$. As already alluded to, a rather precise description of that eigenspace was recently given by Dyatlov and Zworski in dimension $2$~\cite{DyZw} and we will crucially use this result (together with some ideas from~\cite{DaRi17c}) in order to identify the value at $0$.
 
\subsection{Perspectives and related results} 
 
  In higher dimensions, very few things are known on the eigenspace at $0$~\cite{KuWe, DGRS18, CDDP22} but any progress in that direction should in principle give some insights on the behaviour at $0$ of Poincar\'e series in higher dimensions following the lines of our proof. For instance, one could try to implement the recent results from~\cite{CDDP22} valid for nearly hyperbolic $3$-manifolds. Recall from~\cite[Prop.~3.2]{Pa00} that, for a hyperbolic manifold $(X,g)$ of dimension $n_0\geqslant 2$ and for trivial homotopy classes, Paternain's formula~\eqref{e:paternain} becomes
  \begin{multline*}\lim_{T\rightarrow+\infty}\int_{X\times X}\ml{N}_T(c_1,c_2,s)d\text{vol}_g(c_1)d\text{vol}_g(c_2)\\=\frac{4\pi^{\frac{n_0}{2}}\text{vol}_g(X)}{2^{n_0}\Gamma\left(\frac{n_0}{2}\right)}\sum_{k=0}^{n_0-1}\frac{(-1)^k\left(\begin{array}{c} n_0-1\\ k\end{array}\right)}{s+2k+1-n_0}.
  \end{multline*}
   In particular, there is a pole at $0$ in odd dimensions. Coming back to dimension $2$, the results of Hadfield~\cite{Ha18} for geodesic flows on surfaces with boundary should allow to find a formula similar to the one from Theorem~\ref{t:zero} in that case. Yet, this would be much beyond the scope of the present article. Thus, we shall not discuss this here and this was in fact recently achieved in~\cite{Ch21} using the methods of the present article combined with~\cite{DyGu, Ha18}.

Studying the meromorphic continuation of Poincar\'e series and their special value at $0$ is reminiscent from classical questions in number theory. The most famous example is given by the Riemann zeta function which equals $-\frac{1}{2}$ at $0$. More generally, for totally real fields, the Siegel-Klingen Theorem~\cite{Sie37, Klin} shows that the corresponding zeta function takes a rational value at $0$ (in fact at each nonpositive integer). Bergeron, Charollois, Garcia and Venkatesh show that this special value at $0$ can be interpreted as a linking number between periodic orbits of the suspension of \emph{hyperbolic} toral automorphisms of the $2$-torus~\cite{Ber18}. As the geodesic flow on negatively curved surfaces, these are examples of Anosov flows in dimension $3$. Coming back to geodesic flows on surfaces, we also mention the works of Ghys. He showed that, on the unit tangent bundle\footnote{It is homeomorphic to the complement of the trefoil knot in the $3$-sphere.} $\text {PSL}(2,\IR)/\text{PSL}(2,\mathbb{Z})$ of the modular surface, the linking number of a closed geodesic with the trefoil knot can be identified with the value of the Rademacher function on the given geodesic~\cite[\S~3.3]{Gh07}. The Rademacher function is an integer valued function defined on the set of closed orbits of the geodesic flow. More recently, Duke, Imamo\={g}lu and T\'oth showed how to express the linking number of two given (homologically trivial) geodesics of the modular surface as the special value of a certain Dirichlet series~\cite[Th.~4]{DIT}.

Hence, once reinterpreted in terms of linking between Legendrian knots (see~\S\ref{s:morse}), Theorem~\ref{t:zero} can be viewed as another occurrence of these interactions between knots and dynamics but in a context where no arithmetical tool is available. 
In the main part of our work, our knots will be Legendrian, thus ``orthogonal'' to the closed orbits of the geodesic flow. Yet, in Theorem~\ref{t:ghyslike}, we will verify that for two homologically trivial closed geodesics curves $c_1,c_2$ in $X$, the number $\mathcal{N}_\infty(c_1,c_2,0)$ actually computes the linking number of the two geodesics $\gamma_1,\gamma_2$ lifting $c_1,c_2$ in $S^*X$, yielding a direct connection with the linking numbers appearing in the above works.

\subsection*{Organization of the article} 

In Section~\ref{s:keylemma}, we prove a simple geometric lemma that is instrumental in our argument. In Section~\ref{a:geometry}, we review a few standard facts on Riemannian geometry and dynamical systems that are used throughout the article and we also apply the Lemma of Section~\ref{s:keylemma} in the case of geodesic flows. Section~\ref{s:zeta} is the main analytical part where we prove Theorem~\ref{t:meromorphic}. This proof relies on the microlocal methods introduced by Faure-Sj\"ostrand in~\cite{FaSj} and subsequently developed by Dyatlov-Zworski in~\cite{DyZw13} to study the meromorphic continuation of Ruelle zeta functions. The results in that section could be as well obtained via the geometric approach of Pollicott-Ruelle spectra previously developed by Liverani et al.~\cite{BL07, BL13, GLP}. Yet, the microlocal point of view and more specifically the notion of wavefront sets is quite convenient for the study of Poincar\'e series and more specifically of their value at $0$. After that, in Section~\ref{s:zero-contact} and in view of proving Theorem~\ref{t:zero}, we briefly review the recent results of Dyatlov and Zworski on Pollicott-Ruelle resonant states at $0$ for contact Anosov flows in dimension $3$. Then, we apply them to compute the residue of Poincar\'e series at $0$ and we show that this residue can be expressed in terms of representatives of the De Rham cohomology. In Section~\ref{s:morse}, we show that this residue is equal to $0$ for homologically trivial geodesic curves and we express the value at $0$ as the linking between two Legendrian knots. We conclude the proof of Theorem~\ref{t:zero} by appealing to classical results from differential topology such as the Poincar\'e-Hopf formula for manifolds with boundary derived by Morse~\cite{Mor29}. In Section~\ref{s:intersection}, we introduce the notion of constructible functions and show how it allows to extend Theorem~\ref{t:zero} for geodesics that are not necessarily simple. Finally, the article contains 4 appendices. In Appendix~\ref{a:WF}, we review some facts on wavefront sets of distributions that are used in this article. In Appendix~\ref{a:anosov}, we explain how to extend Theorem~\ref{t:meromorphic} to general Anosov flows while Appendix~\ref{ss:ghys} shows how to relate the linking number appearing in Section~\ref{s:morse} to the linking number of two geodesics in $S^*X$. The last appendix is devoted to the proof of a technical lemma that is used in Section~\ref{s:intersection}.

\subsection*{Conventions} All along the article, $(X,g)$ is a smooth, closed, oriented, connected, Riemannian \emph{surface}. Recall that, by closed, one means compact and boundaryless. We denote by $M:=S^*X$ the unit cotangent bundle which is naturally endowed with the Sasaki metric $g_S$~\cite{Pa99, Rug07}. Recall also that the Riemannian metric $g$ on $X$ induces natural isometries between $TX$ and $T^*X$ (endowed with dual metric $g^*$) and the one from $TX$ to $T^*X$ is denoted by $\flat$.

For $0\leqslant k\leqslant 3$, we denote by $\Omega^k(M)$ the space of smooth differential (complex-valued) forms of degree $k$ on $M$. Equivalently, it is the space of smooth complex-valued sections $s:M\rightarrow\Lambda^k(T^*M)$. The topological dual to $\Omega^{3-k}(M)$ (with the topology induced by $\mathcal{C}^{\infty}$-topology) is denoted by $\mathcal{D}^{\prime  k}(M)$ and is called the space of currents of degree $k$ -- see~\cite[Ch.~5]{Schwartz-66} for an introduction to the theory of currents. In particular, we shall denote by $[\Sigma]$ the current of integration over an oriented and embedded closed curve $\Sigma$ of $M$. In that case, $[\Sigma]$ is a current of degree $2$.

\subsection*{Acknowledgements} This work highly benefited from useful discussions and comments of many colleagues that we would like to warmly thank: N.~Bergeron, G.~Carron, B.~Chantraine, V.~Colin, M.~Golla, S.~Gou\"ezel, Y.~Guedes Bonthonneau, C.~Guillarmou, L.~Guillop\'e, J.~March\'e, G.~Paternain, F.~Paulin, S.~Tapie, N.~Vichery, T.~Weich and J.Y.~Welschinger. A special thank to our topologist colleagues for their precious help and for several useful discussions regarding the topological issues from Sections~\ref{s:morse} and~\ref{s:intersection}. Finally, we thank the two anonymous referees for their careful reading and for the helpful criticisms and suggestions that allowed to improve the exposition of the article.

Both authors are supported by the Insitut Universitaire de France and GR also acknowledges the support of the Agence Nationale de la Recherche through the PRC grant ADYCT (ANR-
20-CE40-0017).

\section{A fundamental geometric lemma}
\label{s:keylemma}

We begin with a simple geometric lemma that will be at the heart of our proof of the meromorphic continuation of Poincar\'e series. The reader more familiar with the case of the Ruelle zeta functions can view this result as an analogue in our set-up of the Guillemin (flat) trace formulas used to relate closed geodesics and the distributional kernel of the geodesic flow~\cite{GLP, DyZw13}.
\begin{lemm}\label{l:geomlemm}
Let $N_1,N_2$ be smooth, compact, embedded and oriented submanifolds (without boundary) of a manifold $N$ and let $Y$ be a nonsingular vector field which generates a flow $\varphi^t_Y$ and which is transverse to $N_2$, i.e.
$$\forall x\in N_2,\ Y(x)\notin T_xN_2.$$
Assume that
\begin{itemize}
 \item $\operatorname{dim}(N_1)+\operatorname{dim}(N_2)+1=\operatorname{dim}( N)$;
 \item $N_1\cap N_2=\emptyset$;
 \item for all $T\geqslant 0$ such that $N_1\cap \varphi^T_Y(N_2)=\emptyset$, the submanifolds $N_1$ and
$\{\varphi_Y^t(x): t\in [0,T], x\in N_2\}$ intersect transversally.
\end{itemize}

Let $\mu\in \mathcal{D}^\prime(\mathbb{R}_{>0})$ be defined as
\begin{equation}
\mu(t)=\sum_{0\leqslant \tau\leqslant T: N_1\cap\varphi_Y^{\tau}(N_2)\neq\emptyset}\left(\sum_{x\in N_1\cap\varphi_Y^{\tau}(N_2)}\epsilon_\tau(x)\right)\delta(t-\tau),
\end{equation}
where $\epsilon_\tau(x)$ is equal to $1$ if 
 
$$T_xN_1\oplus \IR Y(x)\oplus  d_{\varphi_Y^{-\tau}(x)}\varphi_Y^{\tau}T_{\varphi^{-\tau}_Y(x)}N_2$$ 
has the same orientation as $ T_{x}N,$ and to $-1$ otherwise.
Then, we have
\begin{equation}\label{e:keyformula}
\boxed{ \mu(t)=(-1)^{\dim(N_1)}\int_M  [N_1]\wedge\left(\iota_Y \varphi_Y^{-t*}[N_2]\right) ,}
\end{equation}
where both sides should be understood as \textbf{distributions} of $t$ and where $[N_1]$ and $[N_2]$ are the currents of integration over $N_1$ and $N_2$ respectively.
\end{lemm}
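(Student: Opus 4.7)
The plan is to verify \eqref{e:keyformula} by testing both sides against an arbitrary $\phi\in\mathcal{C}^\infty_c(\IR)$: the left hand side is by construction a finite weighted sum of Dirac masses located at the isolated intersection times, so the task reduces to showing that the right hand side reproduces the same masses with the same signs. A preliminary observation is that the right hand side is a well defined distribution in $t$: the wavefront set of $\iota_Y\varphi_Y^{-t*}[N_2]$ lies in the conormal of $\varphi_Y^t(N_2)$, and the third transversality hypothesis ensures that, as $t$ ranges over $[0,T]$, these conormal directions never collide with the conormal of $N_1$. Consequently the wedge product defines a family of top-degree currents on $M$ whose integral varies distributionally in $t$, and by transversality this distribution is supported on the same discrete set of intersection times as $\mu$.

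The first substantive step is a localization. Using a partition of unity subordinate to small disjoint neighborhoods of the intersection points $x_0\in N_1\cap\varphi_Y^{\tau_0}(N_2)$, the right hand side of \eqref{e:keyformula} decomposes as a sum of purely local contributions, and it suffices to show that each such contribution equals $\epsilon_{\tau_0}(x_0)\delta(t-\tau_0)$.

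For the local computation near a fixed $x_0$, I would introduce flow-box coordinates $(s,a,b)\in\IR\times\IR^{n_2}\times\IR^{n_1}$ (with $n_i=\dim N_i$ and $y_0:=\varphi_Y^{-\tau_0}(x_0)$ placed at the origin) such that $Y=\partial_s$, the flowout of $N_2$ is $\{b=0\}$, the submanifold $N_2$ itself is $\{s=0,\,b=0\}$ oriented by the $a$-frame, and the ambient volume form is $ds\wedge da\wedge db$. The transversality assumption forces $T_{x_0}N_1$ to project isomorphically onto the $b$-directions, so $N_1$ is locally the graph $b\mapsto(\sigma(b),\alpha(b),b)$ with $\sigma(0)=\tau_0$ and $\alpha(0)=0$. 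A direct computation then gives
\[
[N_2]=(-1)^{n_1n_2}\delta(s)\delta(b)\,ds\wedge db,\qquad \iota_Y\varphi_Y^{-t*}[N_2]=(-1)^{n_1n_2}\delta(s-t)\delta(b)\,db,
\]
and pulling the latter back through the graph parameterization of $N_1$ reduces the local contribution to $\epsilon(N_1)(-1)^{n_1n_2}\delta(t-\tau_0)$, where $\epsilon(N_1)\in\{\pm1\}$ records whether the graph parameterization is orientation-preserving on $N_1$.

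The last step, which I expect to require the most bookkeeping, is to match this sign with $\epsilon_{\tau_0}(x_0)$ after multiplication by the prefactor $(-1)^{n_1}$ in \eqref{e:keyformula}. By definition $\epsilon_{\tau_0}(x_0)$ is the sign of the determinant, expressed in the ambient frame $(\partial_s,\partial_a,\partial_b)$, of the $(n_1{+}n_2{+}1)\times(n_1{+}n_2{+}1)$ matrix whose columns are a positive basis of $T_{x_0}N_1$, then $Y(x_0)=\partial_s$, then the positive basis $(\partial_{a_1},\ldots,\partial_{a_{n_2}})$ of $d\varphi_Y^{\tau_0}(T_{y_0}N_2)$. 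Using the graph tangent vectors $\xi_j=\sigma_{b_j}(0)\partial_s+\alpha_{b_j}(0)\cdot\partial_a+\partial_{b_j}$, this matrix has the block form $\begin{pmatrix}A & I_{n_2+1}\\ I_{n_1}&0\end{pmatrix}$, whose determinant equals $(-1)^{n_1(n_2+1)}$ after swapping the two column blocks; hence $\epsilon_{\tau_0}(x_0)=\epsilon(N_1)(-1)^{n_1(n_2+1)}=\epsilon(N_1)(-1)^{n_1}(-1)^{n_1n_2}$. Multiplying the local contribution $\epsilon(N_1)(-1)^{n_1n_2}\delta(t-\tau_0)$ by the prefactor $(-1)^{n_1}$ therefore yields exactly $\epsilon_{\tau_0}(x_0)\delta(t-\tau_0)$, and summing over the finitely many intersection points in $[0,T]$ gives \eqref{e:keyformula}.
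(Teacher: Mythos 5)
Your proposal is correct and follows essentially the same route as the paper's proof: localization by a partition of unity near each intersection point, flow-box coordinates in which $Y=\partial_s$ and the currents become explicit tensor products of Dirac masses, and a final orientation/determinant computation producing the sign $(-1)^{n_1(n_2+1)}=(-1)^{n_1}(-1)^{n_1n_2}$ that matches $\epsilon_{\tau_0}(x_0)$ (the paper's $(-1)^{(n-k)k}$ with $k=n_1$, $n-k=n_2+1$). The only cosmetic difference is that you realize $N_1$ as a graph over the $b$-variables while the paper flattens it to a coordinate subspace.
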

Note that our assumptions on $N_1$ and $N_2$ ensure that the intersection
of the two submanifolds $\{\varphi_Y^t(x): t\in [0,T], x\in N_2\}$ and $N_1$ consists 
of a finite number of points.
\begin{proof}
By a partition of unity argument, we only need to work locally near some point $x\in N_1$ such that $\varphi_Y^{-\tau}(x)\in N_2$ for some $\tau>0$. Up to replacing $N_2$ by $\varphi_Y^{\tau}(N_2)$, we may also assume that $\tau=0$ and that we work on a small interval of time centered around $0$ rather than in $\IR_{>0}$.
Using our transversality assumptions on $N_1$, $N_2$ and $Y$, we may assume without loss of generality that there are 
local coordinates $(x_1,\dots,x_k,x_{k+1},\dots,x_n)$ near $x$ such that
\begin{itemize}
 \item $N_1$ (resp. $N_2$) is given by the equations $x_{k+1}=\dots=x_n=0 $ (resp. $x_{1}=\ldots=x_{k+1}=0$);
 \item $x$ is given by $x_1=\dots=x_n=0$;
 \item the vector field $Y$ reads $\frac{\partial}{\partial x_{k+1}}$. 
\end{itemize}
Here, one has $\text{dim}(N_1)=k$ and $\text{dim}(N_2)=n-(k+1)$. In these local coordinates, the currents of integration on $N_1,N_2$ read:
$$[N_1]=\delta_0(x_{k+1},\dots,x_n)dx_{k+1}\wedge\dots\wedge dx_n,$$
and
$$\quad[N_2]=\delta_0(x_{1},\dots,x_{k+1})dx_1\wedge\ldots \wedge dx_{k+1}.$$
In this representation,
$N_1$ is oriented by $(-1)^{(n-k)k}dx_{1}\wedge\dots\wedge dx_k$ and $N_2$ by $dx_{k+2}\wedge\dots\wedge dx_n$, where we assume that $N$ is oriented\footnote{Recall that the integration current on any submanifold depends on some choice of orientation of the submanifold and the ambient manifold.} by $\text{Or}_N=dx_1\wedge \dots\wedge dx_n$. In particular, at $x=0$, the tangent space $T_xN_1\oplus \mathbb{R}Y(x)\oplus T_xN_2$ is oriented by the volume form $(-1)^{(n-k)k}\text{Or}_M$.
Let now $\chi_1$ be a smooth function compactly supported near $t=0$ and $x=0$. In order to conclude, we need to compute
$$\int_{\mathbb{R}\times M}\chi_1 [N_1]\wedge\iota_Y\varphi_Y^{-t*}([N_2])|dt|.$$
Using the above explicit formulas, this is in fact equal to
$$(-1)^{k(n-k+1)}\int_{\mathbb{R}\times M}\chi_1\delta_0(x_{k+1},\ldots x_n)\delta_0(x_1,\ldots,x_k,x_{k+1}-t) dx_1\wedge\ldots\wedge dx_n|dt|.$$
This can be rewritten as
\begin{multline*}
\int_{\mathbb{R}\times M}\chi_1 [N_1]\wedge\iota_Y\varphi_Y^{-t*}([N_2])|dt|
=(-1)^k(-1)^{(n-k)k}\chi_1(0,0)\\
=(-1)^{k+(n-k)k}\int_{M\times\mathbb{R}}\chi(t,x)\delta_0(x_1,\ldots,x_n,t)dx_1\wedge\ldots \wedge dx_n|dt|,
\end{multline*}
which implies the expected result (by partition of unity). Working in these local coordinates, one could also verify that $\int_{0}^{T}\iota_Y(\varphi_Y^{-t*}[N_2])|dt|$ is the current of integration on the submanifold $(\varphi_Y^{t}(N_2))_{0\leqslant t\leqslant T}$. In fact, with the above conventions for local coordinates, one has, locally near $x=0$ and for some small enough $t_0>0$, 
$$\int_{-t_0}^{t_0}\iota_Y(\varphi_Y^{-t*}[N_2])|dt|=(-1)^k\delta_0(x_{1},\ldots,x_k)dx_1\wedge\ldots\wedge dx_k.$$
In other words, $\int_{0}^{T}\iota_Y(\varphi_Y^{-t*}[N_2])|dt|$ is the current of integration on the submanifold $(\varphi_Y^{t}(N_2))_{0< t<T}$.

\end{proof}

\section{Background on Riemannian geometry}\label{a:geometry}

In this section, we collect some classical results on Riemannian and symplectic geometry -- see~\cite{Be78, Pa99, Rug07} for a more detailed account. Along the way, we recall classical notations that are used all along this article.

\subsection{Differential geometry of surfaces with negative Gauss curvature}\label{ss:preliminary} 

We recall some basic geometry of surfaces with special emphasis on the natural frame on the unit cotangent bundle following~\cite[7.2]{ST76} and \cite[section 3.5]{PaSaU}. Recall that we denoted by $M$ the unit cotangent bundle $S^*X=\{(q;p)\in T^*X; \vert p\vert_{g^*(q)}=1\}$ of our surface $X$, and that $\pi:M=S^*X\mapsto X$ is a $\mathbb{S}^1$--bundle. The geodesic vector field $V\in C^\infty(TM)$ on $M$ is the infinitesimal generator of the geodesic flow $\varphi^t:M\mapsto M$. The Liouville $1$--form $\alpha \in C^\infty(T^*M)$ is defined by 
$$\alpha(\eta)=\left\langle d_{(q,p)}\pi(\eta),p \right\rangle, \forall \eta\in T_{(q,p)}M. $$  
Then $\alpha$ is a contact form meaning that $\alpha\wedge d\alpha\in C^\infty\left(\Lambda^3T^*M \right)$ is a volume form on $M$ and the relation between $V$ and $\alpha$ reads
\begin{eqnarray}
\iota_Vd\alpha=0, \iota_V\alpha=1
\end{eqnarray} 
where $\iota$ denotes the interior product. We shall denote by $R\in C^\infty(TM)$ the infinitesimal generator of the $\mathbb{S}^1$ action on the fibers of $M$ which is also the vertical vector field. 
Let now $H$ be the vector field obtained from $V$ by a rotation of angle $\frac{\pi}{2}$ in the direct sense using the flow generated by $R$. The triple $\left(V,R,H \right)$ is a direct orthonormal frame for the orientation induced by $\alpha\wedge d\alpha$. It verifies the structure equations~\cite[Lemma 3.5.5 p.~79]{PaSaU}:
\begin{eqnarray}\label{e:commutation}
[V,R]=-H,\quad [H,R]=V,\quad [V,H]=\left(\pi^*K\right) R
\end{eqnarray}
where $\pi^*K$ is the pull--back of the Gaussian curvature $K$ on $X$.
In this picture, the tangent bundle $TM$ splits as the direct sum $TM=\mathcal{H}\oplus\text{ker} d\pi$ where $\mathcal{H}=\mathbb{R} V\oplus \mathbb{R} H $ and $\text{ker} d\pi=\mathbb{R}R$ is the line bundle spanned by $R$. Moreover, one has $\text{ker}\alpha=\IR H\oplus\IR R$. We also note that $d_{(q,p)}\pi(V)=p$ and that  $d_{(q,p)}\pi( H)=p^\perp$, where $p^\perp$ is the covector that is directly orthogonal to $p$ for the orientation that we have fixed on $X$.

From the dynamical point of view, recall that since $(X,g)$ has negative curvature, the flow $\varphi^t$ is Anosov~\cite[\S 1]{Mar}. Namely, there exists a continuous splitting:
\begin{equation}\label{e:Anosov}\forall x\in M,\quad T_x M=\IR V(x)\oplus E_u(x)\oplus E_s(x), \,\ \dim(E_u)=\dim(E_s)=1
\end{equation}
where $E_u(x)\neq\{0\}$ (resp. $E_s(x)\neq\{0\}$) is the unstable (resp. stable) direction. 
Moreover the unstable and stable directions are preserved by the tangent map $d_x\varphi^t$ and there exist some constants $C>0$ and $\lambda_0>0$ such that, for every $t\geqslant 0$,
$$\forall v\in E_u(x),\quad \|d_x\varphi^{-t}v\|_{g_S(\varphi^{-t}(x))}\leqslant Ce^{-\lambda_0 t}\|v\|_{g_S(x)},$$
and
$$\forall v\in E_s(x),\quad \|d_x\varphi^{t}v\|_{g_S(\varphi^{t}(x))}\leqslant Ce^{-\lambda_0 t}\|v\|_{g_S(x)}.$$
For every $x\in M$, one can define the weakly unstable (resp. stable) manifold $W^{u0}(x)$ (resp. $W^{s0}(x)$) which are smooth immersed submanifolds inside $M$ such that, for every $x\in M$,
\begin{equation}\label{e:un-stable-manifold}T_x W^{u0}(x)=E_u(x)\oplus\IR V(x),\quad\text{and}\quad T_x W^{s0}(x)=E_s(x)\oplus\IR V(x).
\end{equation}
For later purpose, we also define the dual spaces $E_u^*(x)$, $E_s^*(x)$ and $E_0^*(x)$ as the annihilators of $E_u(x)\oplus \IR V(x)$, $E_s(x)\oplus\IR V(x)$ and $E_u(x)\oplus E_s(x)$. We have in some sense an explicit description of the stable and unstable bundles in the vertical/horizontal decomposition of $TM$ via the stable and unstable Ricatti solutions~\cite[\S 3.1.2]{Rug07}. More precisely, there exist two continuous functions $r_\pm:M\rightarrow\mathbb{R}_\pm^* $ such that
\begin{eqnarray}\label{eq:EsEuversusframe}
E_s(x)= \mathbb{R}\left( H(x)+r_-R(x) \right), E_u(x)=\mathbb{R}\left( H(x)+r_+R (x)\right).
\end{eqnarray}
The above equations immediately imply that both $E_{s/u}$ are transverse to the vertical bundle $\mathcal{V}$ and to $\IR H$ inside $\text{ker}\ \alpha$. Recall that the functions $r_\pm$ are $C^1$ in the direction of the flow and that they are solutions to the Ricatti equations
\begin{eqnarray*}
\pm Vr_\pm+r_\pm^2+K\circ \pi=0.
\end{eqnarray*}

\begin{rema}\label{r:anosov-ricatti} More generally, unstable and stable Ricatti solutions are well defined for more general Anosov surfaces (in fact for surfaces without conjugate points). Our assumption on the curvature ensures that $r_\pm$ is nowhere vanishing. 
\end{rema}

Ricatti solutions are naturally related to perpendicular Jacobi fields $J$ along geodesics, through the relation $r=J'(t)J(t)^{-1}$ when it makes sense. Recall that, along a geodesic $t\in\mathbb{R}\mapsto\gamma(t)\in X$ verifying $(\gamma(0),\gamma'(0))=x$, a Jacobi field $J(t)$ is a solution of the equation
$$J''(t)+K(\gamma(t))J(t)=0.$$
Moreover, one has the natural identification
$$\forall t\in\IR,\ d_x\varphi^t(J(0)H(x)+J'(0)R(x))=J(t)H\circ\varphi^t(x)+J'(t)R\circ\varphi^t(x).$$
In the case of $r_\pm$, one has by construction that $J_\pm$ are nowhere vanishing~\cite[Ch.3]{Rug07}. 

Finally, a surface $(X,g)$ is said to have \emph{no focal point} if, for every Jacobi field, $J(0)=0$ implies that $J'(t)>0$ for every $t>0$. In particular, if we denote by $J_R(t)$ the Jacobi field with initial condition $(J(0),J'(0))=(0,1)$, then one has $J_R'(t)\neq 0$ for every $t>0$. We refer to~\cite[\S3]{Gul} for the construction of Anosov surfaces without focal points and with curvatures of both signs.

\subsection{Transversal submanifolds}
\label{ss:transversesubm}

We now fix two smooth embedded curves $\Sigma_1$ and $\Sigma_2$ in $M=S^*X$ that we suppose to be oriented  and boundaryless.
We make the following transversality assumptions which already appeared in the seminal work of Margulis~\cite[p.~49]{Mar}:
\begin{equation}\label{e:transversality-unstable}\forall x\in\Sigma_1,\quad T_x M=T_x\Sigma_1\oplus T_{x} W^{u0}(x),
\end{equation}
and
\begin{equation}\label{e:transversality-stable}\forall x\in\Sigma_2,\quad T_x M=T_x\Sigma_2\oplus T_{x} W^{s0}(x).
\end{equation}

The simplest example of curve $\Sigma$ verifying either~\eqref{e:transversality-unstable} or~\eqref{e:transversality-stable} is
just the fiber $\Sigma(q):=S_q^*X\subset M$. 
This follows from the fact that $T_x(S_q^*X)$ is the vertical space at $x$ which is transversal to the weakly unstable/stable manifold at $x$ thanks to~\eqref{eq:EsEuversusframe}.

\begin{rema} \label{r:fiber}
We note that each submanifold $S_q^*X$ is oriented
by the vertical vector field $R$.  
It means that we orient it in the trigonometric sense 
relative to the orientation on $X$.
\end{rema}

\subsubsection{Examples}\label{sss:tangent-space}

Besides the fiber $S_q^*X$ (and small perturbations of it), we can consider $c:t\in\IR/\ell
\IZ\mapsto q(t)\in X$, $\ell>0$ to be a \emph{smooth} curve such that $q'(t)\neq 0$ for every $t\in\IR/\ell
\IZ$. Up to reparametrization, we can choose $q'(t)$ to be of norm $1$ for every $t\in\IR/\ell
\IZ$. 
Then, we define the (unit) conormal bundle to $c$:
$$N_1^*(c):=\{(q(t),p)\in S^*X:\ t\in\IR/\ell
\IZ,\ p(q^\prime(t))=0\}.$$
This defines a smooth curve inside $S^*X$ and we can verify that, for every $x\in\Sigma$, $T_x\Sigma$ is contained inside the kernel of the canonical Liouville one form $\alpha$. Such a submanifold $N_1^*(c)$ is called \emph{Legendrian}.

Since $\text{dim}\ X=2$, $N_1^*(c)$ consists of two connected components. Given $t\in\IR/\ell
\IZ$, we denote these two covectors by $p^\perp(t)$ and $-p^{\perp}(t)$, with $p^\perp$ being the covector directly orthogonal to $q'(t)^\flat$, i.e. $p^\perp(t):=\star_gq'(t)^\flat$, where $\flat$ means that we take the covector associated with $q'(t)$ via the natural isometry induced by $g$ and where $\star_g$ is the Hodge star map with the convention that $g^*(p,\star_g p)>0$. This defines a natural parametrization of each component of $N_1^*(c)$ as:
$$\gamma_\pm(t)=(q(t), \pm (p^\perp)(t))\in M.$$

 There is a way to check the transversality assumption~\eqref{e:transversality-unstable} in the natural moving frame $(V,H,R)$ of $TM$. The velocity $\gamma_\pm^\prime(t)\in T_{\gamma(t)}M$ can be decomposed in the frame $(V,H,R)$ as 
\begin{eqnarray}
\gamma_\pm^\prime(t)=a_1(t)H(\gamma_\pm (t))+a_{ 2}(t)R(\gamma_\pm(t))
\end{eqnarray} 
where $a_i(t)_{i=1,2}$ are smooth in the variable $t$.
Then the transversality condition means that $(\gamma_\pm^\prime(t),V(\gamma_\pm(t)),U(\gamma(t)) )$ forms a basis of $T_{\gamma(t)}M$ where $U(x)=H(x)+r_+R(x)$ is the unstable vector field. This condition also reads
\begin{eqnarray}\label{eq:transversalitymovingframe}
\boxed{\forall t\in\IR/\ell
\IZ,\quad a_{1}(t)r_+(\gamma_\pm(t))-a_{2}(t)\neq 0.}
\end{eqnarray}
\begin{rema} One can verify that this can be expressed more concretely as follows
$$\forall t\in\IR/\ell
\IZ,\quad g^*_{q(t)}\left(\nabla_{q'(t)}p^\perp(t),p(t)\right)\neq r_+(q(t),p^\perp(t)),$$
where $\nabla$ is the covariant derivative induced by $g$. 
\end{rema}

\begin{rema}\label{r:geodesic} In Theorem~\ref{t:zero}, besides the case of points, we will consider the case where $c(t)$ is a geodesic representative of a nontrivial homotopy class $\mathbf{c}\in\pi_1(X)$. In that case, we will take $\Sigma=\Sigma(c)$ to be the connected component of $N_1^*(c)$ consisting of the covector directly orthogonal to $p(t)$. This satisfies the expected transversality assumption as $a_{ 2}=0$ in that case and as $r_+\neq 0$ thanks to negative curvature. For more general Anosov surfaces, this transversality assumption may not be satisfied for certain closed geodesics as one may have $r_\pm =0$ at some points.

We choose to orient $\Sigma(c)$ via the orientation of the geodesic, i.e. with the vector $-H$. 
\end{rema}

\subsubsection{Orientations in a toy model}\label{r:ex-orientation}

Let us illustrate our choices of orientation in a toy model on $\mathbb{R}^2\times \mathbb{S}^1$, oriented by $dq_1\wedge dq_2\wedge d\phi$, that will be useful for our computations in Section~\ref{s:morse}.
  \begin{itemize}
   \item  \textbf{Example 1}. We consider the horizontal line $c_1=\{(q_1,0), q_1\in \mathbb{R}\}$ oriented by $dq_1$ in $\mathbb{R}^2$. Recalling now that $\mathbf{1}_{[0,+\infty)}'(q)=\delta_0(q)$, therefore $\partial[\IR\times\IR_+]=\partial\mathbf{1}_{\IR_+}(q_2)= -d\mathbf{1}_{\IR_+}(q_2)=-\delta_0(q_2)dq_2=[c_1]$. In other words, $c_1$ is the oriented boundary of $\IR\times\IR_+$. Then, $\Sigma(c_1):=\{(q_1,0,\pi/2):q_1\in\IR\}$ and we oriented it using $dq_1$. This yields the following representation of its current of integration
$$[\Sigma(c_1)]=\delta_0(q_2)\delta_0\left(\phi-\frac{\pi}{2}\right)dq_2\wedge d\phi.$$
Introduce now the surface $S:=\{(q_1,q_2,\pi/2):q_2\geqslant 0\}$ whose (topological) boundary is $\Sigma(c_1)$. This surface is naturally oriented by $dq_1\wedge dq_2$ and thus it can be represented as
$$[S]=\mathbf{1}_{q_2\geqslant 0}(q_1,q_2)\delta_0\left(\phi-\frac{\pi}{2}\right)d\phi.$$
One finds that $[\Sigma(c_1)]$ is a coboundary:
$$d[S]=\delta_0(q_2)\delta_0\left(\phi-\frac{\pi}{2}\right)dq_2\wedge d\phi=[\Sigma(c_1)].$$
\item \textbf{Example 2}. Consider now the point $c_0=(0,0)$. One has
$$S_{c_0}^*\mathbb{R}^2=\{c_0\}\times\mathbb{S}^1:=\{(0,0,\phi):0\leqslant\phi\leqslant 2\pi\}.$$
Our choice of orientation on this curve is to take $d\phi$. Hence, the current of integration on the
fiber $S_{c_0}^*\mathbb{R}^2$ reads $[S_{c_0}^*\mathbb{R}^2]=\delta_0(q_1,q_2)dq_1\wedge dq_2$. As in the above example, introduce the following submanifold $S$ in $\mathbb{R}^2\times\mathbb{S}^1$:
$$S:=\left\{(q_1,q_2,\phi):\ (q_1,q_2)\in\IR^2\setminus\{c_0\},\ \cos\phi=\frac{q_1}{\sqrt{q_1^2+q_2^2}},\ \sin\phi=\frac{q_2}{\sqrt{q_1^2+q_2^2}}\right\},$$
whose topological boundary is $S_{c_0}^*\mathbb{R}^2$. 
Endowing $S$ with the orientation $dq_1\wedge dq_2$ yields the following representation of $[S]$ in $\mathbb{R}^2\setminus\{0\}\times(-\pi/2,\pi/2)$:
\begin{eqnarray*}[S]&=&\mathbf{1}_{\IR_+}(q_1)\delta_0\left(q_2-q_1\tan(\phi)\right)d(q_1\tan\phi-q_2)\\
&=&\mathbf{1}_{\IR_+}(q_1)\delta_0\left(q_2-q_1\tan(\phi)\right)\left(\frac{q_1d\phi}{1+\phi^2}+\tan(\phi) dq_1-dq_2\right).
\end{eqnarray*}
To see that this is a well defined current, one can take a smooth test form $\psi(q,\phi,dq,d\phi)\in \Omega^2(\IR^2\times(-\pi/2,\pi/2))$ and observe that $\int_{\IR^2\times(-\pi/2,\pi/2)} [S]\wedge \psi$ is well defined. 
This current can be extended into a well-defined current on $\IR^2\times(-\pi/2,\pi/2)$. Hence, by a partition of unity in the $\phi$ variable, $[S]$ defines a current on $\IR^2\times\IS^1$. Finally, in $\mathbb{R}^2\times(-\pi/2,\pi/2)$, one has
$$d[S]=-\delta_0(q_1)\delta_0\left(q_2-q_1\tan\phi\right)dq_1\wedge dq_2=-[S_{c_0}^*\mathbb{R}^2].$$
Performing the same argument in every half plane, one finds that $[S_{c_0}^*\mathbb{R}^2]$ is a coboundary: $d[S]=-[S_{c_0}^*\mathbb{R}^2]$. Using polar coordinates $(r,\theta,\phi)$, this manifold can also be viewed as the boundary of the manifold $\mathbb{R}_{>0}\times\mathbb{T}^2$
oriented by $rdr\wedge d\theta\wedge d\phi$. In these coordinates, the current of integration on the fiber $S_{c_0}^*\mathbb{R}^2$, oriented by $d\phi$, reads
$$[S_{c_0}^*\mathbb{R}^2]= \delta_0(r)\delta_0(\phi-\theta)dr\wedge (d\theta-d\phi).$$
If we now form the surface 
$$S:=\left\{\left(r,\theta,\theta\right):r> 0,\ 0\leqslant\theta\leqslant 2\pi\right\},$$
endowed with the orientation $rdr\wedge d\theta$, then
$$[S]=\mathbf{1}_{\mathbb{R}_{>0}}(r)\delta_0(\phi-\theta)(d\phi-d\theta).$$
In particular,
$$d[S]=\delta_0(r)\delta_0(\phi-\theta)dr\wedge (d\phi-d\theta)=-[S_{c_0}^*\mathbb{R}^2].$$
  \end{itemize}

\subsection{First properties}\label{ss:first-prop}

Using the Anosov property and our transversality assumptions, one can show the following:
\begin{lemm}[Transversality for long enough times]\label{l:transversality} 
Suppose that~\eqref{e:transversality-unstable} and~\eqref{e:transversality-stable} hold.
Then, there exists some $T_0 >0$ such that, for every $t\geqslant T_0$, $\varphi^{-t}(\Sigma_1)$ and $\Sigma_2$ intersect transversally with respect to the flow in the sense that, for every $x\in \varphi^{-t}(\Sigma_1)\cap\Sigma_2$,
$$T_x M=T_x \varphi^{-t}(\Sigma_1)\oplus T_x \Sigma_2\oplus \IR V(x).$$
In particular, for every fixed $t\geqslant T_0$, the set of points lying in $\varphi^{-t}(\Sigma_1)\cap\Sigma_2$ is finite. Moreover, the times $t\geqslant T_0$ for which this intersection is not empty are discrete and do not accumulate. 
\end{lemm}
In particular, our preliminary geometric lemma~\ref{l:geomlemm} applies with $N_1=\varphi^{-t}(\Sigma_1)$ and $N_2=\Sigma_2$ for every $t\geq T_0$.
\begin{proof}
We fix $v(x,t)$ generating $T_{\varphi^{t}(x)}\Sigma_1$. Thanks to~\eqref{e:transversality-unstable}, one has then
$$T_{\varphi^t(x)}M=\mathbb{R}(v(x,t))\oplus E_u(\varphi^ t(x))\oplus \IR V(\varphi^t(x)).$$
Using the Anosov property and the fact that $\Sigma_1$ is a closed embedded curve satisfying~\eqref{e:transversality-unstable}, one can find some $T_0$ such that, for $t\geqslant T_0$ and for every $x\in\varphi^{-t}(\Sigma_1)$, $d_{\varphi^{t}(x)}\varphi^{-t} (v(x,t))$ lies in a fixed (small) conical neighborhood of $E_s(x)$. Moreover, one has
$$T_{x}M=d_{\varphi^{t}(x)}\varphi^{-t} \mathbb{R}(v(x,t))\oplus E_u(x)\oplus \IR V(x).
$$

Hence, we have shown that the family $(V(x),(d_{\varphi^{t}(x)}\varphi^{-t} (v(x,t))))$ generates a vector space of dimension $2$ lying in a conical neighborhood of $E_s(x)\oplus \IR V(x)$ that does not intersect $T_x\Sigma_2$ thanks to~\eqref{e:transversality-stable}. This concludes the first part of the lemma 

Now, we fix $t_1<t_2$ both greater than $T_0$ and we consider the two submanifolds $\Sigma_2$ and $(\varphi^{-t}(\Sigma_1))_{t_1<t<t_2}$ of $M$. They intersect transversally thanks to the first part of the lemma. Hence, as they are respectively of dimension $1$ and $2$, one finds that the intersection between these two submanifolds consists in a finite number of points which concludes the proof. 
\end{proof}

Recall that, in Theorems~\ref{t:meromorphic} and~\ref{t:zero}, we are primarly interested with the case where $\Sigma(c)$ is the direct conormal to an oriented closed geodesic (including the case where $c$ is reduced to a point). See remarks~\ref{r:fiber} and~\ref{r:geodesic} for definitions of $\Sigma(c)$. In that case, we can be slightly more precise about transversality:

\begin{lemm}[Immediate transversality]\label{l:transversality-geodesic} Suppose that $M=S^*X$ where $(X,g)$ is a Riemannian surface and $V$ is the geodesic vector field. Let $c_1$ and $c_2$ be two closed geodesics (including the case of points). Then,
the transversality conditions~(\ref{e:transversality-stable}), (\ref{e:transversality-unstable}) and the conclusion of Lemma~\ref{l:transversality} hold for $\Sigma(c_1)$ and $\Sigma(c_2)$ with \begin{itemize}                                                                                                                                                                                                                                                    \item either $T_0=0$ if $\Sigma(c_1)\cap\Sigma(c_2)=\emptyset$,
\item or any $T_0>0$ otherwise.                                                                                                                                                                                                                                                                                                                                                                                         \end{itemize}

\end{lemm}

The point of taking $T_0=0$ here is that the set of geodesic arcs $\mathcal{P}_{c_1,c_2}$ directly orthogonal to $c_1$ and $c_2$ verifies $\sharp\{\gamma\in\mathcal{P}_{c_1,c_2}:0<\ell(\gamma)\leq T\}<\infty$ for every $T> 0$. For more general curves $c_1$ and $c_2$ that only satisfy~\eqref{eq:transversalitymovingframe}, this may not be the case. Our main results would remain valid but we would start our Poincar\'e series from a certain $T_0$ given by Lemma~\ref{l:transversality}. Moreover, the value at $0$ (when it makes sense) computed in Theorem~\ref{t:zero} will depend on this choice of $T_0$ up to an integer term.

\begin{proof}
We already said that, in the case of geodesic curves, the transversality conditions~\eqref{e:transversality-unstable} and \eqref{e:transversality-stable} hold true for both Legendrians $\Sigma(c_1)$ and $\Sigma(c_2)$. 
Hence it remains to explain why we can choose $T_0=0$ when $\Sigma(c_1)\cap\Sigma(c_2)=\emptyset$ or any $T_0>0$ otherwise.

To that aim, we need to compute the differential of the geodesic flow acting on a vector field $W$ which is either $H$ (when $c_1$ is not a point) or $R$ (when $c_1$ is a point) and to show that, for every $t>0$, $(\tilde{W},d\varphi^{-t}(W))$ forms a moving frame of the plane bundle $\ker(\alpha)\subset TM$ with $\tilde{W}=R$ (resp. $H$) if $c_2$ is (resp. not) a point. This calculation can be easily handled using the formalism of Jacobi fields~\cite[p.18]{Rug07} that was briefly recalled at the end of \S\ref{ss:preliminary}. We start with the case where $W=H$. One has
$$d_x\varphi^{-t}(H)=J_H(-t) H(\varphi^{-t}(x))+J_H'(-t)R(\varphi^{-t}(x)),$$
where $J_H(t)$ solves the Jacobi equation $J''(t)+K\circ\pi(\varphi^t(x))J(t)=0$ with initial conditions $J(0)=1$ and $J'(0)=0$. In particular, as $K<0$ everywhere on $X$, one finds that $J_H'(-t)<0$ and $J_H(-t)\geq 1$ for every $t>0$. Hence, $d_x\varphi^{-t}(H)$ has nontrivial components along $H$ and $R$. It implies that $(H,d\varphi^{-t}H)$ and $(H,d\varphi^{-t}R)$ are basis of $\text{ker}(\alpha)$ for every $t>0$ as expected. 

In the case where $W=R$, one has
$$d_x\varphi^{-t}(R)=J_R(-t) H(\varphi^{-t}(x))+J_R'(-t)R(\varphi^{-t}(x)),$$
where $J_R(t)$ solves the Jacobi equation $J''(t)+K\circ\pi(\varphi^t(x))J(t)=0$ with initial conditions $J(0)=0$ and $J'(0)=1$. We can then conclude similarly. In fact, one can also pick $T_0=0$ when $\textbf{c}_1$ is trivial and $\textbf{c}_2$ is not (even if $c_1$ lies on $c_2$).
\end{proof}

\begin{rema}\label{r:sign-anosov}For surfaces with an Anosov geodesic flow (but not necessarily negatively curved), there is no conjugate points so that $J_R(t)\neq 0$ as soon as $t\neq 0$. If we make the extra assumption that $(X,g)$ has no focal points (see~\S\ref{ss:preliminary}), then one has $J_R'(-t)\neq 0$ for every $t>0$ so that the conclusion of the lemma remains true in the case where $c_1$ and $c_2$ are points. In particular, in the case of two points, our main Theorem remains valid for Anosov surfaces without focal points.
\end{rema}

\subsection{Applying Lemma~\ref{l:geomlemm}}
We are now in position to apply our preliminary lemma to the geodesic vector field on $M$. More precisely, we introduce a weighted version of Poincar\'e series in such a way that the sum runs over a finite number of elements:

\begin{prop}\label{p:WF-finite-time-geodesic} Let $c_1,c_2$ be two closed geodesics (including the case of points) and denote by $\Sigma_1=\Sigma(c_1)$ and $\Sigma_2=\Sigma(c_2)$ their corresponding Legendrian lifts.  
 
 Then, for every $T_0\geqslant0$ satisfying
 $$\varphi^{-T_0}(\Sigma_1)\cap \varphi^{T_0}(\Sigma_2)=\emptyset,$$
 one can find some $t_0>0$ such that, for every $\chi\in\mathcal{C}^{\infty}_c((2T_0-t_0,+\infty))$, and for every $\tau\geqslant 0$:
$$I_\tau(\chi):=-\int_{M}\varphi^{T_0*}([\Sigma_1])(x,dx)\wedge \left(\int_{\IR}\chi(t-2T_0)\varphi^{-(t+T_0+\tau)*}\iota_V[\Sigma_2](x,dx)  \vert dt\vert \right)$$
is well defined and it is equal to
$$\varepsilon(\mathbf{c}_2)\sum_{t\geqslant 2T_0-t_0: \varphi^{-T_0}(\Sigma_1)\cap\varphi^{t+\tau+T_0}(\Sigma_2)\neq\emptyset}\left(\sum_{x\in  \varphi^{-T_0}(\Sigma_1)\cap\varphi^{t+\tau+T_0}(\Sigma_2)}\chi( t -2T_0)\right),$$
where
$$\varepsilon(\mathbf{c}_2):= 1,\ \text{if $\mathbf{c}_2$ is trivial},$$
and
$$\varepsilon(\mathbf{c}_2):= -1,\ \text{otherwise}.$$
\end{prop}

\begin{proof}[Proof of Proposition \ref{p:WF-finite-time-geodesic}] Thanks to Lemma~\ref{l:transversality-geodesic}, we are in position to apply the geometric lemma~\ref{l:geomlemm} with $N_1=\Sigma_1$, $N_2=\Sigma_2$, $N=M$ and $Y=V$. The conclusion of this Lemma exactly tells us the expected equality up to the computation of the orientation parameter $\epsilon_\tau(x)$. To compute this parameter, we proceed as in the proof of Lemma~\ref{l:transversality-geodesic} when we computed the image of $H$ and $R$ under the differential of the geodesic flow. More precisely, the orientation on $$T_x\Sigma(c_1)\oplus\IR V(x)\oplus d_{\varphi^{-t}(x)}\varphi^{t}\left(T_{\varphi^{-t}(x)}\Sigma(c_2)\right)$$
 induced by the orientation on each subspace is given by
 $$\tilde{W}(x)\wedge V(x)\wedge\left(d_{\varphi^{-t}(x)}\varphi^{t}(W(\varphi^{-t}(x)))\right),$$ 
 where $W=R$ (resp. $-H$) if $c_2$ is (resp. not) reduced to a point and the same for $\tilde{W}$. This is proportional to the polyvector $V\wedge H\wedge R$ with a coefficient in front that can be expressed in terms of $J_{R}(t)$, $J_R'(t)$, $J_H(t)$ and $J_H'(t)$ with the conventions of the proof of Lemma~\ref{l:transversality-geodesic}. The same sign discussion as the one discussed there allows to conclude on the value of $\epsilon_\tau(x)$.

\end{proof}

\subsection{A priori bounds on the growth of intersection points}

We would now like to replace the smooth compactly supported function $\chi$ in Proposition~\ref{p:WF-finite-time-geodesic} by $e^{-st}\mathbf{1}_{\IR_+}(t)$. This is where we will extensively use the theory of Pollicott-Ruelle resonnances in the next paragraph. Before doing that, we need some a priori estimate on the growth of points in $\mathcal{P}_{c_1,c_2}$.

According to Lemma~\ref{l:transversality-geodesic}, one knows that 
$$\tilde{\mathcal{P}}_{c_1,c_2}:=\left\{t>0: \varphi^{-t}(\Sigma(c_1))\cap\Sigma(c_2)\neq\emptyset\right\}$$
defines a discrete subset of $\IR_+^*$ with no accumulation points. Moreover, for every $t>0$, we can define
\begin{equation}\label{e:multiplicity}m_{c_1,c_2}(t):=\left|\left\{x\in \varphi^{-t}(\Sigma(c_1))\cap\Sigma(c_2)\right\}\right|<+\infty,\end{equation}
which is thus equal to $0$ outside a discrete subset of $(0,+\infty)$. We begin with the following a priori upper bound on these quantities:
\begin{lemm}\label{l:exp-growth} Let $c_1$ and $c_2$ be two closed geodesics. Then, for every $h>h_{\operatorname{top}}$, one can find some constant $C_h>0$ such that, for every $T>0$,
$$\sum_{t\in[T,T+1)}m_{c_1,c_2}(t)\leqslant C_h e^{hT}.$$
\end{lemm}

The proof of this Lemma could be extracted from Margulis' arguments in~\cite[\S 7]{Mar}. Yet, for the sake of completeness, we give a short proof of it.

\begin{proof} We fix some $h>h_{\text{top}}$. For every $x\in M$ and for every $\epsilon, T>0$, we define the Bowen ball centered at $x$:
$$B(x,\epsilon,T):=\{y\in M:\forall 0\leqslant t\leqslant T,\ d_{g_S}(\varphi^t(x),\varphi^t(y))<\epsilon\},$$
where $d_{g_S}$ is the distance induced by the Riemannian metric. From the definition of the topological entropy, one can find some $\epsilon_0>0$ such that, for every $0<\epsilon<\epsilon_0$, one can find some constant $C_\epsilon>0$ so that 
$$\forall T>0,\quad\inf\left\{|F|:F\subset M\ \text{and}\ \bigcup_{x\in F}B(x,\epsilon,T)=M\right\}\leqslant C_\epsilon e^{hT}.$$
Fix now some $T>0$ and some $0<\epsilon<\epsilon_0$. We let $F\subset M$ so that the infimum is attained in the previous inequality. We decompose $\Sigma(c_2)$ as follows
$$\Sigma(c_2)=\bigcup_{x\in F}\Sigma_2(x,\epsilon,T),$$
where
$$\Sigma_2(x,\epsilon,T):=\Sigma(c_2)\cap B(x,\epsilon,T).$$
We fix some conic neighborhood $\mathbf{C}_u$ of $E_u\setminus \underline{0}$ so that it does not intersect $T\Sigma(c_1)$. From the Anosov assumption and from the transversality assumption~\eqref{e:transversality-stable}, we know that there exists some $T_1>0$ such that for every $t\geqslant T_1$, $d\varphi^t(T\Sigma(c_2))\subset\mathbf{C}_u$. Observe that for $\varepsilon$ small enough, for every \emph{small} piece of curve $\tilde{\Sigma}$ so that $T(\tilde{\Sigma})$ is contained in $\mathbf{C}_u$ and $\tilde{\Sigma}$ is contained in a ball of radius $\varepsilon$, then $\tilde{\Sigma}$ intersects $\Sigma(c_1)$ at most at one point thanks to~\eqref{e:transversality-unstable}. Still thanks to our transversality assumptions, one can verify that there exists some integer $p_0$ (depending only on the cone and on $\Sigma(c_1)$) so that, for every $\tilde{\Sigma}$ such that $T(\tilde{\Sigma})$ is contained in $\mathbf{C}_u$
\begin{equation}\label{e:return}
\tilde{\Sigma}\cap\Sigma(c_1)\neq\emptyset\Longrightarrow \forall 0<t\leqslant p_0^{-1},\ \varphi^t(\tilde{\Sigma})\cap\Sigma(c_1)=\emptyset.
\end{equation}
In particular, if $\Sigma_2(x,\epsilon,T)\cap\varphi^{-t}(\Sigma(c_1))\neq\emptyset$ for some $t\geqslant T_1$, then
$$\left|\left(\Sigma_2(x,\epsilon,T)\cap\varphi^{-t}(\Sigma(c_1))\right)_{T\leqslant t<T+1}\right|\leqslant p_0.$$ As the cardinal of $F$ is $\leqslant C_{\epsilon} e^{hT}$, we finally find that, for every $T\geqslant T_1$,
$$\sum_{t\in[T,T+1)}m_{c_1,c_2}(t)\leqslant C_{\epsilon}p_0 e^{hT},$$
which concludes the proof of the Lemma.
\end{proof}

\section{Proof of Theorem~\ref{t:meromorphic}}\label{s:zeta}

Let $c_1$ and $c_2$ be two closed geodesics (including the case of points) and denote by $\Sigma_1:=\Sigma(c_1)$ and $\Sigma_2:=\Sigma(c_2)$ their Legendrian lifts as in Section~\ref{a:geometry}. See Remarks~\ref{r:fiber} and~\ref{r:geodesic}. With the convention of the previous section, we set, for $T_0\geq 0$,
$$\zeta_{\Sigma_1,\Sigma_2}(z):=\varepsilon(\mathbf{c}_2)\sum_{t>0: \varphi^{-T_0}(\Sigma_1)\cap\varphi^{t}(\Sigma_2)\neq\emptyset}\sharp\left\{x\in \varphi^{-T_0}(\Sigma_1)\cap\varphi^{t}(\Sigma_2)\right\}e^{-zt}.$$
If $T_0\geq 0$ is chosen small enough, then the relation with the function from the introduction is
$\mathcal{N}_\infty(c_2,c_1,z)=\varepsilon(\mathbf{c}_2)e^{zT_0}\zeta_{\Sigma_1,\Sigma_2}(z)$. More precisely, if $\Sigma_1\cap\Sigma_2=\emptyset$, we can take $T_0=0$. Otherwise, we take $T_0>0$ some small enough to ensure that $\varphi^{-T_0}(\Sigma_1)\cap \Sigma_2$ is empty. Recall that this sum is holomorphic for $\text{Re}(z)>h_{\text{top}}$ thanks to Lemma~\ref{l:exp-growth}.

We are now ready to prove Theorem~\ref{t:meromorphic} and the only missing ingredient is the input given by the theory of Pollicott-Ruelle resonances that we will  briefly review. More precisely, we will rely on the microlocal methods that were initiated in~\cite{FRS, FaSj, DyZw13}. Yet, it is plausible that a similar result could be derived using the geometric methods from~\cite{BL07, GLP} or the coherent states approach of~\cite{Ts12, FaTs}. We also note that Chaubet recently adapted our argument to surfaces with boundary and that he provided a proof of this meromorphic extension relying only on wavefront sets arguments~\cite{Ch21} without introducing the formalism of anisotropic Sobolev spaces as we are doing here.

\begin{rema}\label{r:margulis} As we shall see in our proof, the poles of this meromorphic function are included in the set of Pollicott-Ruelle resonances for currents of degree $2$~\cite{BL07, BL13, FaSj, GLP, DyZw13}. Recall from~\cite[Prop.~4.9]{GLP} that the real parts of the resonances are in that case $\leqslant h_{\text{top}}$. Moreover, as the geodesic flow on a negatively curved surface is topologically mixing, then $h_{\text{top}}$ is a simple eigenvalue and it is the only resonance on the axis $h_{\text{top}}+i\IR$. Using the inverse Laplace transform, this would allow to recover Margulis' asymptotic formula~\eqref{e:margulis} in that framework.
\end{rema}

\begin{rema} As was pointed to us by Guedes-Bonthonneau, Theorem~\ref{t:meromorphic} can be thought of as an analogue in the context of Pollicott-Ruelle resonances of the Kuznetsov trace formulas for the Laplace-Beltrami operator~\cite{Ze92}. In that context, Zelditch considered the spectral projector of the Laplacian on the eigenvalues $\leqslant\lambda$ and he integrated the kernel of the operator against singular distributions carried by smooth submanifolds. Here, we will do the exact same thing with the resolvent of our operator. Yet, compared with that reference, we need to restrict ourselves to certain families of submanifolds verifying our transversality assumptions~\eqref{e:transversality-unstable} and~\eqref{e:transversality-stable}, to ensure that they satisfy the appropriate wavefront set conditions so that they can be integrated against the Schwartz kernel of our resolvent. 
\end{rema}

\subsection{Anisotropic Sobolev spaces}

 Let us denote by $\mathcal{L}_V=d\iota_V+\iota_Vd$ the Lie derivative along the geodesic vector field $V$. For every $0\leqslant k\leqslant  3$, the map
$$R_k(z):= (\mathcal{L}_{V}+z)^{-1}=\int_0^{+\infty}e^{-tz}e^{-t(\mathcal{L}_{V})}|dt|:\Omega^k(M)\rightarrow\mathcal{D}^{\prime k}(M)$$
is well defined and holomorphic in some region $\text{Re}(z)\geqslant C_0$ for some $C_0>0$ depending on $M$ and $V$. Here $|dt|$ is understood as the Lebesgue measure on $\IR$ in order to distinguish with currents of integration. It follows from the works of Butterley-Liverani~\cite{BL07, BL13}, Faure-Sj\"ostrand~\cite{FaSj}, Giulietti-Liverani-Pollicott~\cite{GLP} and Dyatlov-Zworski~\cite{DyZw13} that this resolvent admits a meromorphic extension to the whole complex plane. The poles are the so-called Pollicott-Ruelle resonances and the residues are given by spectral projectors. Such a property was obtained by defining appropriate Banach spaces with anisotropic regularity properties and we briefly describe in this paragraph the anisotropic Sobolev spaces introduced by Faure--Sj\"ostrand~\cite{FaSj} via microlocal methods -- see also~\cite{FRS} for an earlier construction of Faure-Roy-Sj\"ostrand in the case of diffeomorphisms and~\cite{DyZw13} for the extension to the case of currents by Dyatlov-Zworski as we need here.

\subsubsection{Anisotropic Sobolev spaces} Let $0\leqslant k\leqslant n$. Recall that we have a scalar product on $\Omega^k(M)$ by setting, for every $(\psi_1,\psi_2)\in \Omega^k(\ml{M})$,
\[\langle \psi_1,\psi_2\rangle_{L^2}:=\int_{M}\langle \psi_1,\overline{\psi_2}\rangle_{g^*}{\rm dvol}_g,\]
where $g^*$ is the metric induced by $g$ on $k$-forms. We set $L^2(M,\Lambda^k(T^*M))$ (or $L^2(M)$ if there is no ambiguity) to be the completion of the (complex-valued) $k$-forms $\Omega^k(M)$ for this scalar product. Recall that the set of De Rham currents of degree $k$ (the topological dual to $\Omega^{n-k}(M)$) is denoted by $\ml{D}^{\prime k}(M)$. It was shown in~\cite{BL07, BL13, FaSj, GLP, DyZw13} that $\mathcal{L}_V$ has a discrete spectrum when acting on convenient Banach spaces of currents of degree $k$. 

Let us recall the definition of these spaces in the microlocal framework of Faure and Sj\"ostrand.
Following these authors, we can define anisotropic Sobolev spaces $\mathcal{H}_k^{m_{N_0,N_1}}$ of degree $k$ currents, $N_1\gg N_0>0$ large enough, which satisfy the continuous inclusion properties 
$$\Omega^k(M)\subset \mathcal{H}_k^{m_{N_0,N_1}}\subset \mathcal{D}^{\prime,k}(M) ,$$ 
where elements in $\mathcal{H}_k^{m_{N_0,N_1}}$ are microlocally of Sobolev regularity $\leqslant-N_0$ in some conical neighborhood of $E_u^*$, of Sobolev regularity $\geqslant \frac{N_1}{8}$ outside some larger neighborhood of $E_u^*$ and of positive Sobolev regularity $\geqslant N_1$ in some small conical neighborhood of $E_s^*$. We refer to~\cite[\S3]{DGRS18} for a more detailed exposition with conventions similar to ours. In fact, a key point for our analysis is that the currents of interest for our analysis belongs to these anisotropic spaces or to its dual. See~\eqref{e:stable-current-anisotropic} and~\eqref{e:unstable-current-anisotropic} below for precise statements.

\begin{rema}\label{r:semigroup} 
When proving Theorem~\ref{t:meromorphic}, we will need the following a priori estimate: there exists $C_0>0$ such that, for every $t\geqslant 0$,
\begin{equation}\label{e:semigroup}\left\|\varphi^{-t*}\right\|_{\ml{H}^{m_{N_0,N_1}}_k\rightarrow \ml{H}^{m_{N_0,N_1}}_k}\leqslant e^{tC_0}.
\end{equation}
Using classical results from semigroup theory on Banach spaces~\cite[Cor.~3.6, p.~76]{EN}, such a bound can be obtained from the fact that
$$-\ml{L}_V:\ml{D}(\ml{L}_V)\subset\ml{H}^{m_{N_0,N_1}}_k(M)\rightarrow\ml{H}^{m_{N_0,N_1}}_k(M)$$
is closed, densely defined and verifies the resolvent estimate: for every $\text{Re}(z)>C_0$, one has
$$\left\|(\ml{L}_V+z)^{-1}\right\|_{\ml{H}^{m_{N_0,N_1}}_k\rightarrow \ml{H}^{m_{N_0,N_1}}_k}\leqslant\frac{1}{\text{Re}(z)-C_0}.$$
The closedness property follows from~\cite[Lemma A.1]{FaSj} while the density property follows from the density of $\Omega^k(M)$ in our anisotropic spaces and the resolvent estimate was proved in~\cite[Proof of Lemma~3.3]{FaSj} (adapted to the case of currents).

\end{rema}

 In~\cite[Th.~1.4]{FaSj} (see~\cite[\S 3.2]{DyZw13} for the case of currents), it is shown that $(\ml{L}_V+z):\ml{D}(\ml{L}_V)\rightarrow \ml{H}^{m_{N_0,N_1}}_k(M)$ is a family of Fredholm operators of index $0$ depending analytically on $z$ in the region $\{\text{Re}(z)>C_0-c_0N_0\}$ for some constants $C_0,c_0>0$ that are independent of $N_0$ and $N_1$. Then, the poles of the meromorphic extension are the eigenvalues of $-\ml{L}_V$ on $\ml{H}_k^{m_{N_0,N_1}}(M)$, the so-called \emph{Pollicott-Ruelle resonances}. The residues at each pole are the corresponding spectral projectors, and the range of each spectral projector generates the \emph{Pollicott-Ruelle} resonant states.

\subsubsection{Dual spaces} The dual space $(\ml{H}^{m_{N_0,N_1}}_k(M))^\prime$ to $\ml{H}^{m_{N_0,N_1}}_k(M)$ is denoted by $\ml{H}^{-m_{N_0,N_1}}_{3-k}(M)$
via some slight abuse of notations. It is a Sobolev space of currents of degree $3-k$ since we use the wedge product to pair it with elements from $\ml{H}^{m_{N_0,N_1}}_k(M)$. 
Elements in the dual have positive Sobolev regularity in a small conic neighborhood of $E_u^*$ and negative Sobolev regularity outside a slightly bigger neighborhood. The duality pairing is then given, for every $(\psi_1,\psi_2)\in\ml{H}^{-m_{N_0,N_1}}_{3-k}(M)\times\ml{H}^{m_{N_0,N_1}}_{k}(M),$
$$\langle\psi_1,\psi_2\rangle_{\ml{H}^{-m_{N_0,N_1}}_{3-k}(M)\times\ml{H}^{m_{N_0,N_1}}_{k}(M)}=\int_M\psi_1\wedge\overline{\psi_2}.$$
The operator dual to $-\ml{L}_V$ is given by $-\mathcal{L}_{-V}.$ Finally, let, for $T\geq 0$, 
$$\Sigma^T_1:=\varphi^{-T-T_0}(\Sigma_1),\quad\text{and}\quad \Sigma^{-T}_2:=\varphi^{T}(\Sigma_2),$$
where $T_0$ was fixed in the definition of the zeta function.
For $T>0$ large enough, the wave front set of $[\Sigma^T_1]=\varphi^{(T+T_0)*}([\Sigma_1])$ is contained in $\Gamma_s\subset T^*M$ which is a small conical neighborhood of $E_s^*\oplus E_0^*$. See Appendix~\ref{a:WF} for a brief reminder on wavefront sets.
We also note that the curve $\varphi^{-T-T_0}\left( \Sigma_1\right)$ in $M$ has codimension $2$. Therefore the corresponding current of integration $[\Sigma^T_1]$ has negative Sobolev regularity of order $<-1 $. More precisely, it has microlocal Sobolev regularity $<-1$ in $\Gamma_s$.     
If we choose $N_1$ large enough, elements of $\mathcal{H}_2^{-m_{N_0,N_1}}(M)$ will have Sobolev regularity of order $\leqslant-\frac{N_1}{8}<-1$ (by duality with $\mathcal{H}_1^{m_{N_0,N_1}}(M)$) outside some neighborhood of $E_u^*$ that contains $\Gamma_s$. Therefore, there exists $T_1\geq 0$ (depending on the choice of the anisotropic Sobolev spaces) such that, 
\begin{equation}\label{e:stable-current-anisotropic}
 \forall T\geqslant T_1,\quad [\Sigma^T_1]\in\ml{H}^{-m_{N_0,N_1}}_{ 2}(M)=(\ml{H}^{m_{N_0,N_1}}_1(M))^\prime.
\end{equation}

\subsection{Proof of Theorem~\ref{t:meromorphic}}\label{ss:proof-theo1}

First by transversality properties of $\Sigma_1,\Sigma_2$ proved in Lemma~\ref{l:transversality-geodesic}, the set of times $\{t>0, \Sigma_1\cap \varphi^{t}\left(\Sigma_2\right)\neq \emptyset\}$ is isolated without accumulation points. Hence
it is sufficient to prove the analytic continuation of 
\begin{multline*}\zeta^T_{\Sigma_1,\Sigma_2}(z):=\varepsilon(\mathbf{c}_2)\sum_{t\geq 2T:\Sigma_1\cap \varphi^{t}\left(\Sigma_2\right)\neq \emptyset} \sum_{x\in \Sigma_1\cap \varphi^{t}\left(\Sigma_2\right)}e^{-tz}\\
=
\varepsilon(\mathbf{c}_2)\sum_{t\geq 0:\Sigma_1^T\cap \varphi^{t}\left(\Sigma_2^{-T}\right)\neq \emptyset} \sum_{x\in \Sigma_1^T\cap \varphi^{t}\left(\Sigma_2^{-T}\right)}e^{-tz}
\end{multline*}
for any $T>0$ large enough. Assume now that $T>0$ is chosen in such a way that $\Sigma^T_1\cap \Sigma^{-T}_2=\emptyset$ and choose some $\delta>0$ such that, for all $t\in [0,2\delta]$, the intersection $\Sigma_1^T\cap \varphi^{t}\left(\Sigma_2^{-T}\right)$ remains empty. Let $\chi\in\ml{C}^{\infty}(\IR,[0,1])$ be a nondecreasing function which is equal to $1$ on $[2\delta,\infty)$ and to $0$ on $(-\infty,\delta]$. In particular, writing $\chi(t)=\int_{0}^\infty\chi'(t-\tau)|d\tau|,$ one has obviously
$$\zeta^T_{\Sigma_1,\Sigma_2}(z)=\varepsilon(\mathbf{c}_2)
\sum_{t\geq 0:\Sigma_1^T\cap \varphi^{t}\left(\Sigma_2^{-T}\right)\neq \emptyset} \sum_{x\in \Sigma_1^T\cap \varphi^{t}\left(\Sigma_2^{-T}\right)}\left(\int_{0}^\infty\chi'(t-\tau)|d\tau|\right)e^{-tz}.$$
Thanks to Lemma~\ref{l:exp-growth}, we know that, for $\text{Re}(z)$ large enough, we can intertwine the sum and the integral over time. Hence,
$$\zeta^T_{\Sigma_1,\Sigma_2}(z)=\varepsilon(\mathbf{c}_2)\int_{0}^\infty\left(
\sum_{t\geq 0:\Sigma_1^T\cap \varphi^{t}\left(\Sigma_2^{-T}\right)\neq \emptyset} \sum_{x\in \Sigma_1^T\cap \varphi^{t}\left(\Sigma_2^{-T}\right)}\chi'(t-\tau) e^{-tz}\right)|d\tau|,$$
or equivalently
$$\zeta^T_{\Sigma_1,\Sigma_2}(z)=\varepsilon(\mathbf{c}_2)\int_{0}^\infty\left(
\sum_{t\geq 0:\Sigma_1^T\cap \varphi^{t+\tau}\left(\Sigma_2^{-T}\right)\neq \emptyset} \sum_{x\in \Sigma_1^T\cap \varphi^{t+\tau}\left(\Sigma_2^{-T}\right)}\chi'(t) e^{-tz}\right)e^{-\tau z}|d\tau|,$$
where we used that $\chi'$ is compactly supported in $(0,\infty)$. We recognize the quantity from Proposition~\ref{p:WF-finite-time-geodesic} so that
$$\zeta^T_{\Sigma_1,\Sigma_2}(z)=-\int_{0}^\infty e^{-\tau z}\left(\int_M[\Sigma_1^T]\wedge \varphi^{-\tau*}\left(\int_{\IR}e^{-tz}\chi'(t)\varphi^{-t*}\iota_V[\Sigma_2^{-T}]|dt|\right)\right)|d\tau|.$$
For a smooth compactly supported function $\chi_1$ on $\IR$, we set 
$$A_{\chi_1}(z):=\int_{0}^\infty\chi_1'(t)e^{-tz}\varphi^{-t*}|dt|.$$
Recall that the integration current on the submanifold $(\varphi^{T+t}(\Sigma_2))_{0<t<2\delta}$ reads $\int_{0}^{2\delta}\varphi^{-t*}\iota_V([\Sigma_2^{-T}])|dt|$ (up to a sign) as we explained at the end of the proof of Lemma~\ref{l:geomlemm}. One can thus remark that $A_{\chi'}(z)\iota_V([\Sigma_2^{-T}])$ is just a truncated (and weighted) version of this current of integration. In particular, it is a current of order $0$ (in the sense that its action on continuous form is bounded) whose wavefront is carried by the conormal to this submanifold. In particular, if we fix $N_0,N_1$ large enough and if we take $T>0$ large enough to ensure that the wavefront set of $(\varphi^{T+t}(\Sigma_2))_{0<t<2\delta}$ lies inside\footnote{This follows from the hyperbolicity of the flow and from the transversality property~\eqref{e:transversality-unstable}.} a small neighborhood of $E_u^*$, then \begin{equation}\label{e:unstable-current-anisotropic}z\in\IC\mapsto A_{\chi'}(z)\iota_V([\Sigma_2^{-T}])\in\mathcal{H}_1^{m_{N_0,N_1}}(M) 
\end{equation}
 defines an holomorphic function. Hence, by remark~\ref{r:semigroup}, we can rewrite for $\text{Re}(z)$ large enough,
\begin{equation}\label{e:referee}\zeta^T_{\Sigma_1,\Sigma_2}(z)=-\left\langle[\Sigma_1^T],(\ml{L}_V+z)^{-1}A_{\chi'}(z)\iota_V[\Sigma_2^{-T}]\right\rangle_{\left(\mathcal{H}_1^{m_{N_0,N_1}}\right)'\times\mathcal{H}_1^{m_{N_0,N_1}}} .
 \end{equation}
We can then conclude the meromorphic continuation of our Poincar\'e series in the region $\{\text{Re}(z)\geq C_0-c_0N_0\}$ thanks to the meromorphic properties of the resolvent that were recalled above. As this is valid for any $N_0$ large enough, this yields the expected meromorphic continuation to the whole complex plane and ends the proof of Theorem~\ref{t:meromorphic}. 

Relation~\eqref{e:referee} is the main formula that allows to show the meromorphic continuation of Poincar\'e series as it expresses it in terms of a resolvent. However, in view of analyzing the value at $0$, it has the drawback that the geodesics of length $\leq 2T$ are missing in the sum. To handle this problem, let us record the following useful rewriting of our initial zeta function. Using one more time Proposition~\ref{p:WF-finite-time-geodesic} to handle the length between $0$ and $2T$ and recalling that $\varphi^{-T_0}(\Sigma_1)\cap\Sigma_2=\emptyset$, we find that, up to decreasing a little bit the value of $\delta$ from the beginning, there exists $\tilde{\chi}\in\ml{C}^\infty_c((-\delta,2T+\delta),[0,1])$ such that $\tilde{\chi}(t)+\chi(t-2T)=1$ on $\IR_+$ and such that 
\begin{multline}\label{e:zeta-equal-resolvent}\zeta_{\Sigma_1,\Sigma_2}(z)=-\int_M\varphi^{T_0*}[\Sigma_1]\wedge A_{\tilde{\chi}}(z)\iota_V[\Sigma_2]\\
-\int_M[\Sigma_1^T]\wedge (\ml{L}_V+z)^{-1}A_{\chi'}(z)\iota_V[\Sigma_2^{-T}].\end{multline}
In this expression, the first term makes sense as a product between currents on submanifolds that are transversal (in particular their wavefront sets are disjoint) while the second term is a pairing in our anisotropic spaces.

\section{ Behaviour of $\zeta_{\Sigma_1,\Sigma_2}$ at $0$}\label{s:zero-contact}
In this section, we use the same conventions as in Section~\ref{s:zeta}, i.e. we are in the set-up of Theorem~\ref{t:meromorphic}. Recall that the currents $[\Sigma_1]=[\Sigma(c_1)]$ and $[\Sigma_2]=[\Sigma(c_2)]$ are elements of $\mathcal{D}^{\prime 2}(M)$. Given the fact that they are currents of integration over a smooth closed curve, we also note that, for $i=1,2$, 
\begin{equation}\label{e:closed}d[\Sigma_i]=0.\end{equation}

\subsection{Description of the spectral projector at $0$} In order to describe the behaviour of our zeta function at $0$, we need to describe the spectral projector $\pi_0^{(1)}$ at $z=0$. This can be achieved following the recent results of Dyatlov and Zworski~\cite{DyZw} on the behaviour of the Ruelle zeta function at $0$. In particular, this will crucially use the contact structure -- see~\cite{Ha18} for extensions to manifolds with boundary and~\cite{CePa} for extensions to the volume preserving case. For $0\leqslant k\leqslant 3$, we set
$$C^{k}:=\text{Ran}(\pi_0^{(k)})\quad\text{and}\quad C_0^k:=C^k\cap\text{Ker}(\iota_V).$$
According to~\cite[Lemma~7.1]{DGRS18}, one has
$$\forall 0\leqslant k\leqslant3,\quad C^k=C_0^k\oplus (\alpha\wedge C_0^{k-1}),$$
with the convention that $C_0^{-1}=\{0\}$. In~\cite[Lemma~3.2]{DyZw}, it is shown that
$$C^{0}:=\mathbb{C} 1,\quad\text{and}\quad C^2_0:=\IC d\alpha.$$
Still in~\cite[Lemma~3.4]{DyZw}, the authors proved that
\begin{equation}\label{e:cohomology}C_0^1=C^1\cap\text{Ker}(d)\simeq H^1(M,\IC).\end{equation}
In particular, the spectral projector can be written as
$$\forall\psi\in\Omega^1(M),\quad \pi_0^{(1)}(\psi)=\left(\int_M\tilde{\mathbf{S}}_0\wedge\psi\right)\alpha+\sum_{j=1}^{b_1(M)}\left(\int_M \tilde{\mathbf{S}}_j\wedge \psi\right) \mathbf{U}_j,$$
with $d\mathbf{U}_j=\iota_V(\mathbf{U}_j)=0$ for every $1\leqslant j\leqslant b_1(M):=\text{dim}\ H^1(M,\IC)$. Note from~\cite[Thm 1.7 p.~334]{FaSj}
that $\mathbf{U}_j$ (resp. $\tilde{\mathbf{S}}_j$) belongs to $\mathcal{D}^{\prime 1}_{\mathring{E}_u^*}(M)$ (resp. $\mathcal{D}^{\prime 2}_{\mathring{E}_s^*}(M)$) for every $1\leqslant j\leqslant b_1(M)$.

By Poincar\'e duality~\cite[\S 4.6]{DaRi17c}, we can observe that $(\tilde{\mathbf{S}}_0, \tilde{\mathbf{S}}_1,\ldots \tilde{\mathbf{S}}_{b_1(M)})$ forms a basis for the adjoint     operator to $-\ml{L}_{V}$ which is $-\ml{L}_{-V}$ acting on some dual anisotropic space of $2$-forms. Thanks to the fact $\int_M\alpha\wedge d\alpha\neq 0$ and to the fact that $\mathbf{U}_j$ is closed, one already knows that $\tilde{\mathbf{S}}_0=\beta d\alpha$ for some $\beta\neq 0$. Observing now that
$$C^2=C^2_0\oplus \left(\oplus_{j=1}^{b_1(M)}\IC(\alpha\wedge \mathbf{U}_j)\right),$$
and applying~\eqref{e:cohomology} to $-V$ instead of $V$, we find that there exists a family\footnote{It is given by the family of ``dual'' eigenvectors.} $(\mathbf{S}_j)_{j=1,\ldots b_1(M)}$ in $\mathcal{D}^{\prime 1}_{\mathring{E}_s^*}(M)$ such that, for every $1\leqslant j\leqslant b_1(M)$, $\iota_V(\mathbf{S}_j)=0$, $d\mathbf{S}_j=0$ and $\tilde{\mathbf{S}}_j=\alpha\wedge \mathbf{S}_j$. Hence, to summarize, one has
\begin{lemm} For every $\psi\in\Omega^1(M)$,
\begin{equation}\label{e:spectral-projector}
 \pi_0^{(1)}(\psi)=\left(\frac{\int_Md\alpha\wedge\psi}{\int_Md\alpha\wedge\alpha}\right)\alpha+\sum_{j=1}^{b_1(M)}\left(\int_M \alpha \wedge \mathbf{S}_j\wedge \psi\right) \mathbf{U}_j,
\end{equation}
where, for every $1\leqslant j\leqslant b_1(M)$,
\begin{enumerate}
 \item $\mathbf{U}_j\in\mathcal{D}^{\prime 1}_{\mathring{E}_u^*}(M)$, $\mathbf{S}_j\in\mathcal{D}^{\prime 1}_{\mathring{E}_s^*}(M)$,
 \item $d\mathbf{U}_j=d\mathbf{S}_j=0$,
 \item $\iota_V(\mathbf{U}_j)=\iota_V(\mathbf{S}_j)=0$.
\end{enumerate}
\end{lemm}

\subsection{Behaviour at $0$ of the Poincar\'e series} We are now in position to study the behaviour at $0$ of the zeta function $\zeta_{\Sigma_1,\Sigma_2}$ from Section~\ref{s:zeta}. Recall from~\cite[Lemma~3.5]{DyZw} that there is no Jordan blocks in the kernel of the operator. In particular, according to~\eqref{e:zeta-equal-resolvent}, we find that
$$\zeta_{\Sigma_1,\Sigma_2}(z)=-\frac{\left\langle\varphi^{(T+T_0)*}[\Sigma_1],\pi_0^{(1)}A_{\chi'}(0)\iota_V\varphi^{-T*}[\Sigma_2]\right\rangle}{z}+h(z),$$
where $h$ is some holomorphic function near $0$. Recall that one can choose $T_0=0$ as soon as $\Sigma_1\cap\Sigma_2=\emptyset$. Note that, from the definition of $A_{\chi'}$, it commutes with $\varphi^{-T*}$ and $\iota_V$ so that

$$\zeta_{\Sigma_1,\Sigma_2}(z)=-\frac{\left\langle\varphi^{T*}([\Sigma_1]),\pi_0^{(1)}\left(\iota_V \varphi^{-T*}A_{\chi'}(0)([\Sigma_2])\right)\right\rangle}{z}+h(z),$$
where $h(z)$ is a holomorphic function. 
Using now the explicit expression given by~\eqref{e:spectral-projector},~\eqref{e:closed} and the fact that $d\alpha\wedge\iota_V(u)=0$ (for every $2$-form $u$), one finds
\begin{eqnarray*}
 z(h(z)-\zeta_{\Sigma_1,\Sigma_2}(z))&=&\left(\int_Md\alpha\wedge \iota_V \varphi^{-T*}(A_{\chi'}(0)[\Sigma_2])\right)\left(\int_M \varphi^{(T+T_0)*}([\Sigma_1])\wedge\alpha\right)\\
 &+ &\sum_{j=1}^{b_1(M)}\left(\int_M \alpha \wedge \mathbf{S}_j\wedge \iota_V \varphi^{-T*}(A_{\chi'}(0)[\Sigma_2])\right) \left(\int_M\varphi^{(T+T_0)*}([\Sigma_1])\wedge \mathbf{U}_j\right)\\
 &= &-\sum_{j=1}^{b_1(M)}\left(\int_M \mathbf{S}_j\wedge \varphi^{-T*}(A_{\chi'}(0)[\Sigma_2])\right) \left(\int_M\varphi^{(T+T_0)*}([\Sigma_1])\wedge \mathbf{U}_j\right)\\
 &= &-\sum_{j=1}^{b_1(M)}\left(\int_M \mathbf{S}_j\wedge A_{\chi'}(0)[\Sigma_2]\right) \left(\int_M[\Sigma_1]\wedge \mathbf{U}_j\right)\\
 &= &-\sum_{j=1}^{b_1(M)}\left(\int_M \mathbf{S}_j\wedge[\Sigma_2]\right) \left(\int_M[\Sigma_1]\wedge \mathbf{U}_j\right),
\end{eqnarray*}
where we used that $\int_{\IR}\chi'(t)dt=1$ and $\varphi^{t*}\mathbf{S}_j=\mathbf{S}_j$ to derive the last line. To summarize, we have shown:
\begin{prop}\label{p:residue} 
There exist a holomorphic function $h$ (in a neighborhood of $0$) and two families of linearly independent closed currents $(\mathbf{U}_j)_{j=1,\ldots, b_1(M)}$ in $\ml{D}^{\prime 1}_{\mathring{E}_u^*}(M)$ and $(\mathbf{S}_j)_{j=1,\ldots, b_1(M)}$ in $\ml{D}^{\prime 1}_{\mathring{E}_s^*}(M)$ such that
$$\forall 1\leqslant i,j\leqslant b_1(M),\ \int_M\alpha\wedge \mathbf{S}_i\wedge \mathbf{U}_j=\delta_{ij},$$
 and, near $z=0$,
 $$\zeta_{\Sigma_1,\Sigma_2}(z)=\frac{1}{z}\sum_{j=1}^{b_1(M)}\left(\int_M \mathbf{S}_j\wedge [\Sigma_2]\right) \left(\int_M[\Sigma_1]\wedge \mathbf{U}_j\right)+h(z).$$
\end{prop}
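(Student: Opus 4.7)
The plan is to combine the integral formula of Theorem~\ref{t:meromorphic-continuation2} (specialised to $n=3$, $n_s=1$, $W=0$) with the Laurent expansion of $R_1(z)$ near $z=0$ provided by Proposition~\ref{p:DZ}. Theorem~\ref{t:meromorphic-continuation2} gives, for $\operatorname{Re}(z)$ large and $T\geqslant T_0$,
$$\zeta_{\Sigma_1,\Sigma_2}(z) = -\langle\varphi^{T*}[\Sigma_1],\, R_1(z)(\iota_V\varphi^{-T*}[\Sigma_2])\rangle.$$
The key supplementary input is that in the contact setting the eigenvalue $0$ of $-\mathcal{L}_V$ on currents of degree $1$ is semisimple, which is~\cite[Lemma~3.5]{DyZw}. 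Consequently the only singularity of $\zeta_{\Sigma_1,\Sigma_2}$ near $0$ is a simple pole with residue $-\langle\varphi^{T*}[\Sigma_1],\, \pi_0^{(1)}(\iota_V\varphi^{-T*}[\Sigma_2])\rangle$, and the proof then reduces to a residue computation using the explicit form~\eqref{e:spectral-projector} of $\pi_0^{(1)}$.

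Next, I substitute~\eqref{e:spectral-projector} into the residue and show that the term proportional to $\alpha$ drops out. Its contribution involves
$$\int_M d\alpha\wedge \iota_V\varphi^{-T*}[\Sigma_2],$$
which vanishes by a degree count: the $4$-form $d\alpha\wedge\varphi^{-T*}[\Sigma_2]$ is identically zero on the $3$-manifold $M$, so Cartan's formula together with $\iota_V d\alpha = 0$ forces the displayed integrand to vanish. For each $j$, I apply the same argument to the $4$-form $\alpha\wedge\mathbf{S}_j\wedge\varphi^{-T*}[\Sigma_2]$: using $\iota_V\mathbf{S}_j=0$ and the fact that $V$ is the Reeb vector field of $\alpha$ (so that $\iota_V\alpha = 1$, a consequence of $\mathcal{L}_V\alpha = 0$ together with $V$ being nonvanishing and lying in $\ker d\alpha$), Cartan's formula yields
$$\alpha\wedge\mathbf{S}_j\wedge\iota_V\varphi^{-T*}[\Sigma_2] = -\mathbf{S}_j\wedge\varphi^{-T*}[\Sigma_2].$$

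The final step removes the flow pullbacks. Since $\mathcal{L}_V\mathbf{S}_j = \mathcal{L}_V\mathbf{U}_j = 0$ one has $\varphi^{t*}\mathbf{S}_j=\mathbf{S}_j$ and $\varphi^{t*}\mathbf{U}_j=\mathbf{U}_j$, and the flow preserves the volume form $\alpha\wedge d\alpha$; a change of variables therefore gives
$$\int_M\mathbf{S}_j\wedge\varphi^{-T*}[\Sigma_2] = \int_M\mathbf{S}_j\wedge[\Sigma_2],\qquad \int_M\varphi^{T*}[\Sigma_1]\wedge\mathbf{U}_j = \int_M[\Sigma_1]\wedge\mathbf{U}_j.$$
Assembling the signs produces the claimed Laurent expansion. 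The biorthogonality $\int_M\alpha\wedge\mathbf{S}_i\wedge\mathbf{U}_j = \delta_{ij}$ is built in by construction, as $\widetilde{\mathbf{S}}_j = \alpha\wedge\mathbf{S}_j$ forms the dual basis to $(\mathbf{U}_j)$ via Poincar\'e duality~\cite[\S 4.6]{DaRi17c}. The principal nontrivial ingredient is the absence of Jordan blocks at $0$ from~\cite{DyZw}; all the remaining manipulations are algebraic identities on currents whose legitimacy as products of distributions is guaranteed by the wavefront containments recorded in Proposition~\ref{p:DZ}.
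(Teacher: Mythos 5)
Your proposal is correct and follows essentially the same route as the paper: identify the simple pole at $0$ via Theorem~\ref{t:meromorphic-continuation2}, Proposition~\ref{p:DZ} and the absence of Jordan blocks from~\cite{DyZw}, insert the explicit projector~\eqref{e:spectral-projector}, kill the $\alpha\otimes d\alpha$ term and contract with $\iota_V$ using $\iota_V\alpha=1$, $\iota_V d\alpha=0$, $\iota_V\mathbf{S}_j=0$, and finally remove the pullbacks by flow-invariance of $\mathbf{S}_j$ and $\mathbf{U}_j$. The only differences are cosmetic (you spell out the degree-count and antiderivation identities that the paper leaves implicit, and the appeal to volume preservation is unnecessary since pullback by an orientation-preserving diffeomorphism already leaves the integral of a top-degree current unchanged).
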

Recall that Hodge-De Rham theory shows that the ellipticity of $d$ implies that the cohomology is independent of the choice of the spaces we are working with -- see e.g.~\cite[Lemma~2.1]{DyZw}. In particular, the currents $(\mathbf{U}_j)_{j=1,\ldots, b_1(M)}$ in $\ml{D}^{\prime 1}_{E_u^*}(M)$ and $(\mathbf{S}_j)_{j=1,\ldots, b_1(M)}$ form a basis of $H^1(M,\IC)$. We would like to note that, so far, the only property that was used is the fact that $[\Sigma_1]$ and $[\Sigma_2]$ lie in the appropriate functional spaces. We did not even use that $d[\Sigma_i]$ was equal to $0$. We emphasize that this formula is only valid in dimension $2$ and that its extension to higher dimensions would require a good knowledge of the spectral projector at $0$ in higher dimensions. See e.g.~\cite{CDDP22} for recent progress in that direction.

As a direct Corollary of this Proposition, we find that
\begin{coro}\label{c:exact}  
If either $[\Sigma_1]$ or $[\Sigma_2]$ is exact, then $\zeta_{\Sigma_1,\Sigma_2}(z)$ is holomorphic in a neighborhood of $z=0$. 
\end{coro}

We note that we just needed one of the two currents $[\Sigma_i]$ to be exact, the other current does not even need to be closed.

\section{$\zeta_{\Sigma_1,\Sigma_2}(0)$ and linking numbers} 

\label{s:morse}

Our goal is now to prove Theorem~\ref{t:zero}. The set-up is the same as in Section~\ref{s:zeta} except that we now suppose that the geodesic curves $c_1$ and $c_2$ defining $\Sigma_1$ and $\Sigma_2$ are \emph{homologically trivial}. We begin by collecting a few facts from topology that will be used in our proof. Then, in~\S\ref{ss:proofvalue0}, we relate the value at $0$ with the linking number of $\Sigma_1$ and $\Sigma_2$. Finally, in~\S\ref{ss:proof-euler-zero}, we gather these facts to compute the value at $0$ in terms of Euler characteristics.

\subsection{Three topological ingredients}

\subsubsection{Writing $\Sigma_i$ as the boundary of an explicit surface} 

We assume that $c_i$ is homologically trivial in $X$. Therefore an application of a Gysin type argument~\cite[Prop.~14.33]{BoTu82} implies that the multiple $\chi(X)\Sigma_i$ of the lifted curve $\Sigma_i$ is homologically trivial in $M$. Since the value of $\zeta_{\Sigma_1,\Sigma_2}(0)$ will be interpreted in terms of linking (see~\S\ref{ss:proofvalue0}) number between $\Sigma_1,\Sigma_2$, this implies that $\chi(X)\zeta_{\Sigma_1,\Sigma_2}(0)$ is an integer. 
But our goal is to compute more ``concretely'' the linking between $\Sigma_1,\Sigma_2$. This involves constructing an explicit de Rham primitive $R_2$ of $\Sigma_2$ and then computation the exact intersection number of $R_2$ with $\Sigma_1$.

 In the sequel, we will say that
a smooth vector field $Y\in \Gamma(TX)$ has real hyperbolic zeroes if for every $q$ such that $Y(q)=0$, the linearization of the vector field $Y$ at $q$ is a matrix with \emph{real, non vanishing eigenvalues}. The index $\text{Ind}(q)$ of a critical point is the number of negative eigenvalues.
This hyperbolicity condition immediately implies that the set $\text{Crit}(Y)$ of critical points is isolated and finite. The main example of such vector fields is given by the gradient vector field of a Morse function which is the case we will mostly use in the following. 
 
In the case where $c_i$ is reduced to a point, this reads as follows:
\begin{lemm}\label{l:boundary} Let $Y$ be a smooth vector field on $X$ with real hyperbolic zeroes.
Set 
$$S:=\left\{\left(q,\frac{Y(q)^{\flat}}{\|Y(q)^\flat\|}\right): q\in X\setminus\operatorname{Crit}(Y)\right\}.$$
Then the current of integration $[S]$ on $S^*(X\setminus\operatorname{Crit}(Y))$ extends as a current on $S^*X$ and it satisfies the equation:
\begin{equation}
d [S]=-\sum_{a\in \operatorname{Crit}(Y)}(-1)^{\operatorname{Ind}(a)}[S^*_aX]. 
\end{equation}
\end{lemm}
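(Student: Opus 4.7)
The plan is to first check that $[S]$ extends as a current of order zero on $S^*X$, then to localize $d[S]$ on the fibers over $\operatorname{Crit}(Y)$ via a pushforward argument, and finally to identify the multiplicity on each fiber as a Gauss map degree.

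For the extension, $[S]$ is manifestly a well-defined integration current on the open set $S^*X\setminus \pi^{-1}(\operatorname{Crit}(Y))$, where $\pi:S^*X\to X$ is the canonical projection. Near a real hyperbolic zero $a\in\operatorname{Crit}(Y)$, the section $q\mapsto Y(q)^\flat/\|Y(q)^\flat\|$ is smooth with gradient of size $O(1/d(q,a))$, so the $2$-dimensional area of $S\cap \pi^{-1}(B_\epsilon(a))$ stays uniformly bounded as $\epsilon\to 0$. Hence $[S]$ extends to a current of order zero on $S^*X$, just as in the prototypical example $Y(q)=q$ treated explicitly in Example~2 of paragraph~\ref{r:ex-orientation}.

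For the localization step, since $\operatorname{Crit}(Y)$ is finite, one has $\pi_*[S]=[X\setminus\operatorname{Crit}(Y)]=[X]$. As $X$ is closed, $d[X]=0$, so $\pi_*(d[S])=d\,\pi_*[S]=0$. Consequently $d[S]$ is a $2$-current supported in $\bigcup_{a\in\operatorname{Crit}(Y)}S_a^*X$. Each $S_a^*X$ is a smooth closed $1$-submanifold of the $3$-dimensional manifold $S^*X$, so the only $2$-currents supported there are multiples of $[S_a^*X]$. We may thus write $d[S]=\sum_{a\in\operatorname{Crit}(Y)}k_a[S_a^*X]$ for some $k_a\in\IR$.

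To determine $k_a$, fix $\omega\in\Omega^1(S^*X)$ supported in a small tubular neighborhood of $S_a^*X$ disjoint from the other critical fibers, normalized by $\int_{S_a^*X}\omega=1$. Writing $\Psi(q)=(q,Y(q)^\flat/\|Y(q)^\flat\|)$ and $S_\epsilon=\Psi(X\setminus \cup_{a'}B_\epsilon(a'))$, Stokes' theorem combined with the orientation $\partial(X\setminus B_\epsilon(a))=-\partial B_\epsilon(a)$ yields
\[
k_a\;=\;\langle d[S],\omega\rangle\;=\;\lim_{\epsilon\to 0^+}\int_{S_\epsilon}d\omega\;=\;-\lim_{\epsilon\to 0^+}\int_{\Psi(\partial B_\epsilon(a))}\omega.
\]
As $\epsilon\to 0$, the loop $\Psi(\partial B_\epsilon(a))$ projects to $a$ while its fiber coordinate wraps around $S_a^*X$ with winding number equal to the degree of the Gauss map $q\mapsto Y(q)^\flat/\|Y(q)^\flat\|$, i.e.\ the Poincar\'e--Hopf index of the zero. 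For a real hyperbolic zero on an oriented surface this index is $(-1)^{\operatorname{Ind}(a)}$, so $\int_{\Psi(\partial B_\epsilon(a))}\omega\to (-1)^{\operatorname{Ind}(a)}$, giving $k_a=-(-1)^{\operatorname{Ind}(a)}$ and thereby the announced identity.

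The delicate point of the argument is the last limit, namely the weak convergence $[\Psi(\partial B_\epsilon(a))]\to (-1)^{\operatorname{Ind}(a)}[S_a^*X]$ as $1$-currents together with the correct global sign. After a local trivialization of $S^*X$ near $S_a^*X$ and a linearization of $Y$ near $a$ (Hartman--Grobman is enough since only the degree is needed), this reduces to the classical Gauss map degree calculation; the orientations are compatible with the ambient orientation on $S^*X$ and the trigonometric orientation chosen on the fibers $S_a^*X$ in paragraph~\ref{aa:dim2}, exactly as one can read off from the worked model computation of Example~2 in paragraph~\ref{r:ex-orientation} (source case, $\operatorname{Ind}(a)=0$, $d[S]=-[S_a^*X]$).
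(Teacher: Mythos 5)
Your route is genuinely different from the paper's. The paper makes no appeal to support theorems or degree theory: near each zero it chooses coordinates adapted to the real hyperbolic normal form $Y=(\chi_1\tilde q_1+h_1)\partial_{\tilde q_1}+(\chi_2\tilde q_2+h_2)\partial_{\tilde q_2}$, writes the extension of $[S]$ \emph{explicitly} as $-\frac{\chi_1}{|\chi_1|}\mathbf{1}_{\mathbb{R}_+}(\tilde q_1)\delta_0(F)\,dF$ for an explicit defining function $F(\tilde q,\phi)$, and differentiates that formula to obtain $d[S]=-\operatorname{sign}(\chi_1\chi_2)\,[S_a^*X]$ directly; the extension and the multiplicity come out of one and the same computation. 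Your argument is softer (finite-mass extension, localization of $d[S]$, Stokes plus a Gauss-map degree) and would in principle apply to any isolated zero of finite index, not only real hyperbolic ones; the price is that you must be careful with two points that the explicit computation sidesteps.

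First, the deduction ``$\pi_*(d[S])=d\,\pi_*[S]=0$, consequently $d[S]$ is supported in $\bigcup_a S_a^*X$'' is a non sequitur: a current can push forward to zero without being supported on the fibers. The correct (and easier) reason is that $S$ is a closed, boundaryless, embedded submanifold of the open set $S^*X\setminus\bigcup_a S_a^*X$, so $d[S]$ vanishes there by Stokes. Second, and more seriously, the claim that the only degree-$2$ currents supported on the circle $S_a^*X$ are multiples of $[S_a^*X]$ is false as stated: in local coordinates $(x_1,x_2,\theta)$ with $S_a^*X=\{x_1=x_2=0\}$, the current $\delta_0'(x_1)\delta_0(x_2)\,dx_1\wedge dx_2$ is closed, supported on the circle, and is not a multiple of the integration current. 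To rescue this step one must use that $d[S]$ is the boundary of a current of locally finite mass and invoke the flat support and constancy theorems of geometric measure theory --- or, better, drop the step entirely: once the weak convergence $[\Psi(\partial B_\epsilon(a))]\to(-1)^{\operatorname{Ind}(a)}[S_a^*X]$ is established against \emph{all} test $1$-forms (which your last paragraph asserts, and which does follow from the $C^1$ convergence of the rescaled Gauss maps to the linear model), Stokes applied to $S_\epsilon$ already identifies $\langle d[S],\omega\rangle$ for every $\omega$ supported near the fiber, and no structure theorem is needed. With that repair, and with the sign pinned against Example 2 of \S\ref{r:ex-orientation} as you do, the argument is complete.
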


Before getting to the proof of the Lemma, let us quickly show how to deduce an explicit de Rham primitive of the current of integration on a cotangent fiber $[S^*_aX]$. As $X$ is path-connected, for any pair of points $(a,b)$, consider some smooth oriented path $\gamma_{ab}$ from $a$ to $b$ and denote by $\theta_{ab}=\pi^*([\gamma_{ab}])\in \mathcal{D}^{\prime,1}(S^*X)$. One can verify that $[S_a^*X]=[S_b^*X]+d\theta_{ab}$ which just says cotangent fibers are homotopic. By the Poincar\'e--Hopf Theorem, one has
$\sum_{a\in\text{Crit}(f)}(-1)^{\text{ind}(a)}=\chi(X)\neq 0$ so that
\begin{coro}\label{r:trivial-homology2}  
For every $a\in \operatorname{Crit}(Y)$:
\begin{eqnarray*}
[S^*_aX]=-\frac{1}{\chi(X)}d\left([S]+ \sum_{b\in\operatorname{Crit}(Y)}(-1)^{\operatorname{ind}(b)}\theta_{ab}\right).
\end{eqnarray*}              
\end{coro}

\begin{rema}\label{r:tangent-surface} Given a smooth vector field $q\mapsto Y(q)$ as in the lemma, we can compute the tangent space to the corresponding surface $S$
in the horizontal/vertical bundles of Section~\ref{a:geometry}. Given a curve $x:t\mapsto (q(t),p(t))\in S$ such that $x'(0)\neq 0$, we find that $d_{q(0),p(0)}\Pi(x'(0))=q'(0)\neq 0.$ As this is valid for any curve, this shows that the tangent space to $S$ is transversal to the vertical bundle. 
\end{rema}

\begin{proof} We only need to prove this formula near a fixed critical point $a$ of $Y$. The argument is just a variation of the proof of Stokes formula except that we do not know a priori that $\overline{S}$ is a \emph{smooth} manifold with boundary. We let $\kappa: U\rightarrow \mathbb{R}^2$ be a local chart centered at $a$ (i.e. $\kappa(a)=0$). Using the symplectic lift of $\kappa$, this chart lifts into a chart $\tilde{\kappa}:S^*U\rightarrow \mathbb{R}^2\times\mathbb{S}^1$. In this chart, the boundary $\partial S$ of $S$ is exactly given by $\{0\}\times\mathbb{S}^1$. Similarly, $S$ reads $\{(\tilde{q},\phi(\tilde{q})):\tilde{q}\neq 0\}$ where $\phi:\mathbb{R}^2\setminus \{0\}\rightarrow \mathbb{S}^1$ is a smooth map obtained via the local chart and the initial vector field $Y$. Without loss of generality, we may assume that the image of the submanifold $S$ in $\mathbb{R}^2$ is oriented by the canonical orientation of $\mathbb{R}^2$. As the critical points are of real hyperbolic type, we can always choose the local chart centered at $a$ in such a way that, in the (induced) local coordinates $$Y(\tilde{q}_1,\tilde{q}_2)=\left(\chi_1 \tilde{q}_1+h_1(\tilde{q})\right)\partial_{\tilde{q}_1}+\left(\chi_2 \tilde{q}_2+h_2(\tilde{q})\right)\partial_{\tilde{q}_2},$$ 
with $\chi_1\chi_2\neq 0$ and with $h_1,h_2=\mathcal{O}(\Vert\tilde{q}\Vert^2)$ which are smooth functions defined near $0$. As in the examples of paragraph~\ref{r:ex-orientation}, we define the submanifold of $\mathbb{R}^2\times\mathbb{S}^1$:
$$S:=\left\{(\tilde{q},\phi):\tilde{q}\neq 0,\ \cos\phi=\frac{\chi_1 \tilde{q}_1+h_1(\tilde{q})}{\|Y(\tilde{q})\|},\ \sin\phi=\frac{\chi_2 \tilde{q}_2+h_2(\tilde{q})}{\|Y(\tilde{q})\|}\right\},$$
which is oriented with $d\tilde{q}_1\wedge d\tilde{q}_2$. We set
$$F(\tilde{q},\phi):=\chi_2 \tilde{q}_2+h_2(\tilde{q})-\tan\phi\left(\chi_1 \tilde{q}_1+h_1(\tilde{q})\right).$$
Then, as in this example, one can define, in $\mathbb{R}^2\times(-\pi/2,\pi/2)$
$$[S]=-\frac{\chi_1}{|\chi_1|}\mathbf{1}_{\mathbb{R}_+}(\tilde{q}_1)\delta_0\left(F(\tilde{q},\phi)\right)dF,$$
which extends the current of integration $[S]$ defined on $\mathbb{R}^2\setminus\{0\}\times(-\pi/2,\pi/2)$. Then, taking a partition of unity on $\mathbb{S}^1$ (associated with each half plane of $\IR^2$), one can verify that $[S]$ is well defined on $\IR^2\times\IS^1$. Finally, if we differentiate this expression, we find 
$$d[S]=-\frac{\chi_1\chi_2}{|\chi_1\chi_2|}\delta_0(\tilde{q}_1,\tilde{q}_2)d\tilde{q}_1\wedge d\tilde{q}_2.$$
Recalling that $[S_a^*X]=\delta_0(\tilde{q}_1,\tilde{q}_2)d\tilde{q}_1\wedge d\tilde{q}_2$ was oriented by $d\phi$, we obtain the expected result.
\end{proof}

For general simple closed curves (with maybe several connected components), we get a similar construction

\begin{lemm}\label{l:boundary-conormal} Let $c$ be a simple closed curve in $X$ which is trivial in homology. Denote by $X(c)$ the surface whose oriented boundary is equal to $c$. 

Then, for any vector field $Y$ with real hyperbolic zeroes outside of the curve $c$ that is pointing normally inward $X(c)$, the surface
 $$S:=\left\{\left(q,\frac{Y(q)^{\flat}}{\|Y(q)^\flat\|}\right): q\in X(c)\setminus\operatorname{Crit}(Y)\right\}.$$
defines a current of integration $[S]$ on $S^*(X\setminus\operatorname{Crit}(Y))$ that extends as a current on $S^*X$ satyisfying the equation:
\begin{equation}\label{e:boundary-formula}
d [S]=[\Sigma(c)]-\sum_{a\in \operatorname{Crit}(Y)\cap X(c)}(-1)^{\operatorname{Ind}(a)}[S^*_aX].
\end{equation}
Moreover, such a vector field exists.
\end{lemm}
Again, we recover from this Lemma (and from Lemma~\ref{l:boundary}) that $[\Sigma(c)]$ is trivial in De Rham cohomology with somehow a ``concrete'' expression of a surface whose boundary is $\Sigma(c)$. We emphasize that $c$ does not need to be a closed geodesic here, that $c$ may have several connected components and that the assumption on the curvature of $X$ is useless in that statement. We record the following important geometric fact. By construction, the vector field $Y$ is normal to $c$ and points \emph{inward} $X(c)$ as in the examples of paragraph~\ref{r:ex-orientation}.
\begin{rema} In the following, we always take the conventions that the surfaces $X(c)$ are closed, i.e. they contain their topological boundary. 
\end{rema}

\begin{proof}
Take $\tilde{f}$ to be a smooth function which is constant on $c$ and whose gradient vector field is positively colinear to $c'(t)^{\perp}$. This implies that $\nabla_g \tilde{f}$ has no critical points in some neighborhood of $c$.   
By density of Morse functions in the $\mathcal{C}^\infty$-topology, we can find arbitrarily close to $\tilde{f}$ a smooth Morse function $f$. In particular, its gradient vector field has now finitely many critical points which are all of real-hyperbolic type and which are away from $c$. The gradient vector field $\nabla_gf$ may not be normal to $c$ anymore. 
Take some $\ml{C}^\infty$ cut--off function such that $\chi=1$ near $ c$ and such that $\chi$ is supported in some small tubular neighborhood of $c$. Then $h=\chi \tilde{f}+(1-\chi)f$ is arbitrarily $\ml{C}^1$ close to both $f$ and $\tilde{f}$. The function $h$ is $\mathcal{C}^1$ close to $\tilde{f}$ hence we can choose $f$ and $\chi$ in such a way that $h$ has no critical points in the support of $\chi$. So all the critical points of $h$ are in the region where $\chi=0$ which is the region where $h=f$. Hence $h$ is a Morse function. Arguing as in the proof of Lemma~\ref{l:boundary} (see also~\S\ref{r:ex-orientation}), we find that $Y=\nabla_g h$ has the expected properties. Finally, the proof of Lemma~\ref{l:boundary} did not explicitely used the fact that the vector field was a gradient one and the boundary formula remains true for any vector field verifying the assumption of the Lemma.
\end{proof}

As a consequence of the reminder of the appendix on the wavefront set of the current of integration on a submanifold, we record a complementary Lemma concerning the wave front sets of the primitives produced by both Lemmas~\ref{l:boundary} and~\ref{l:boundary-conormal}:

\begin{lemm}\label{r:WF-bounding-surface} In Lemmas~\ref{l:boundary} and~\ref{l:boundary-conormal}, the resulting current of integration $[S]$ has wavefront set contained in $N^*(S)\cap \cap_{a\in \text{Crit}(Y)}T^*(S_a^*X)\setminus\underline{0}$.
\end{lemm}

\subsubsection{Poincar\'e-Hopf formula for surfaces with boundary} A second key ingredient in our proof is the Poincar\'e-Hopf formula as it was derived by Morse in the case of manifolds with boundary~\cite[Th.~A0]{Mor29}. For simplicity, we only state the formula for surfaces:
\begin{theo}[Poincar\'e-Hopf formula for surfaces with boundary]\label{t:morse} 
Let $c$ be a simple closed and nontrivial curve in $X$ (possibly with several connected components) which is homologically trivial. Denote by $X(c)$ the surface whose oriented boundary is equal to $c$. 

Let $Y$ be a smooth vector field on $X$ with real hyperbolic zeroes and such that $Y$ meets the outgoing normal to $c$ at finitely many points. Denote by $\tilde{Y}$ the vector field induced\footnote{It is obtained by projecting $Y$ on the tangent space to $c$.} by $Y$ on $c$ and suppose that the zeroes $\operatorname{Crit}_{\operatorname{out}}(\tilde{Y})$ of $\tilde{Y}$ where $Y$ is outgoing are also of hyperbolic type. Then, one has
$$\chi(X(c)):=\sum_{a\in\operatorname{Crit}(Y)\cap X(c)}(-1)^{\operatorname{ind}(a)}-\sum_{a\in\operatorname{Crit}_{\operatorname{out}}(\tilde{Y})}(-1)^{\operatorname{ind}(a)}.$$
\end{theo}

Here, we say that $\tilde{Y}$ has hyperbolic zeroes if these are nondegenerate. At some point, we will also need a $1$-dimensional version of this result~\cite[Th.~1]{Mor29}. This reads:

\begin{lemm}\label{r:morse-1d} 
Let $\tilde{Y}$ be a vector field on the interval $[0,1]$ all of whose zeroes are of real hyperbolic type and that is pointing inward $[0,1]$ at $0$ and $1$. Then, one has
$$-1=\sum_{a\in\text{Crit}(\tilde{Y})}(-1)^{\operatorname{ind}(a)}.$$
\end{lemm}

\subsubsection{Hodge-De Rham Theorem} Our final ingredient is the following version of the Hodge-De Rham Theorem in the spaces $\ml{D}^\prime_{\Gamma}(M)$ from Appendix~\ref{a:WF}.
\begin{theo}\label{t:hodge-derham} Let $\Gamma$ be a closed cone, let $0\leq k\leq 3$ and let $u$ be an element in $\ml{D}^{\prime k}_\Gamma(M)$ verifying $du=0$. Then the following holds
\begin{itemize}
 \item there exists $\omega\in\Omega^k(M)$ and $v\in\mathcal{D}^{\prime k-1}_{\Gamma}(M)$ such that $u=\omega+dv$,
 \item if $u=dv$ for some $v\in\mathcal{D}^{\prime k-1}(M)$, then there exists $\omega\in\mathcal{D}^{\prime k-1}_{\Gamma}(M)$ such that $u=d\omega$.
\end{itemize}
\end{theo}

\begin{proof}
A proof of the first point can be found in~\cite[\S2.2]{DyZw}. The proof of the second point is also a consequence of this result. In fact, it tells us that $u=\omega+dW$ for some $\omega\in\Omega^k(M)$ and some $W\in \ml{D}^{\prime k-1}_\Gamma(M)$. Now, from the classical De Rham Theorem~\cite[p.355]{Schwartz-66} on $\mathcal{D}^\prime(M)$, one knows that $\omega=dw_1$ for some $w_1\in\Omega^k(M)$.
\end{proof}

\subsection{The value at $0$ as a linking number}
\label{ss:proofvalue0} 
Using the conventions of~\S\ref{s:zeta} and recalling that we denote by $[\Sigma_i]$ the current of integration over $\Sigma_i=\Sigma(c_i)$, one has
\begin{prop}\label{p:value-at-0} Suppose that $c_1$ and $c_2$ are closed geodesics and that $[\Sigma_1]$ and $[\Sigma_2]$ are exact. Then, for every $T_0>0$ small enough,
 $$\zeta_{\Sigma_1,\Sigma_2}(0)=
 - \int_M \varphi^{T_0*} [\Sigma_1]\wedge R_{2},$$
where $[\Sigma_2]=dR_{2}$ and $R_2\in\mathcal{D}^{\prime 1}_{N^*(\Sigma_2)}(M)$. Moreover, $T_0$ can be chosen equal to $0$ if $\Sigma_1\cap \Sigma_2=\emptyset$.
\end{prop}
Note that 
$$\mathbf{L}(c_1,c_2):=\int_M \varphi^{T_0*} [\Sigma_1]\wedge R_{2}$$
can be understood as the linking number between the two Legendrian knots $\varphi^{-T_0} (\Sigma_1)$ and $\Sigma_2$ and that this choice is independent of the choice of $R_2$ as soon as $[\Sigma_1]$ is also exact. The exact value of this quantity will be computed below in terms of Euler characteristics. We also remark that we do not require the curves $(c_1,c_2)$ to be simple in the above Proposition, they can self--intersect. We do not even really need $[\Sigma_1]$ or $[\Sigma_2]$ to be Legendrian and the only needed properties are the Margulis transversality assumptions~\eqref{e:transversality-unstable} and~\eqref{e:transversality-stable} in view of making sense of the various pairings. In fact, as we shall see below, for any pair of (homologically trivial) knots $\Sigma_1$ and $\Sigma_2$ verifying these transversality assumptions, the value at $0$ of the corresponding Poincar\'e series is, \emph{up to some natural correction term}, a linking number (thus a rational number). Recall from Lemma~\ref{l:transversality} that, for general knots, one may have to consider geodesic trajectories of large enough length (i.e. $T_0>0$ large enough). Thus, the linking number and the value at $0$ may differ by an integer depending the choice of $T_0$ we pick in the definition of the Poincar\'e series.
\begin{proof} To prove this formula, we will start from the integral formula given by~\eqref{e:zeta-equal-resolvent}:
 $$\zeta_{\Sigma_1,\Sigma_2}(z)=-\int_M\varphi^{T_0*}[\Sigma_1]\wedge A_{\tilde{\chi}}(z)\iota_V[\Sigma_2]-\int_M[\Sigma_1^T]\wedge \iota_V (\ml{L}_V+z)^{-1}A_{\chi'}(z)[\Sigma_2^{-T}],$$
 We now decompose $[\Sigma_2]$ using the spectral projector at $0$ which, by duality with~\eqref{e:spectral-projector}, can be written as
 $$\pi_0^{(2)}([\Sigma_2])=\left(\frac{\int_M\alpha\wedge[\Sigma_2]}{\int_M\alpha\wedge d\alpha}\right)d\alpha+\sum_{j=1}^{b_1(M)}\left(\int_M\mathbf{S}_j\wedge [\Sigma_2]\right)\alpha\wedge \mathbf{U}_j.$$
 In particular, using that $[\Sigma_2]$ is exact and Legendrian, the limit as $z\rightarrow 0$ makes sense due to the fact that $\pi_0^{(2)}(A_{\chi'}(z)[\Sigma_2^{-T}])=0$ (as $\pi_0$ commutes with the pullback by the flow). Hence, the formula for the value at $0$ can be expanded as
 \begin{multline*}\zeta_{\Sigma_1,\Sigma_2}(0)=-\int_M\varphi^{T_0*}[\Sigma_1]\wedge A_{\tilde{\chi}}(0)\iota_V[\Sigma_2]
 \\
 -\int_M\varphi^{T_0*}[\Sigma_1]\wedge \varphi^{-T*}\iota_V\ml{L}_V^{-1}A_{\chi'}(0)\varphi^{-T*}[\Sigma_2].
 \end{multline*}

 Thus the natural candidate would be to take
 $$\tilde{R}_2:= A_{\tilde{\chi}}(0)\iota_V[\Sigma_2]+\varphi^{-T*}\iota_V\ml{L}_V^{-1}A_{\chi'}(0)\varphi^{-T*}[\Sigma_2].$$
 As $d[\Sigma_2]=0$ and as $d$ commutes with $\varphi^{-T*}$ and the operators $A_{\psi}$ for all $\psi$, one has in fact
 $$d\tilde{R}_2= A_{\tilde{\chi}}(0)\mathcal{L}_V[\Sigma_2]+\varphi^{-2T*}A_{\chi'}(0)[\Sigma_2].$$
 From the definition of $A_{\tilde{\chi}}$, one has
 $$d\tilde{R}_2= [\Sigma_2]+ A_{\tilde{\chi}'}(0)[\Sigma_2]+A_{\chi'(.-2T)}(0)[\Sigma_2]=[\Sigma_2],$$
 where we used that $\tilde{\chi}(t)+\chi(t-2T)=1$ on $\IR_+$. By taking $N_1$ arbitrarily large in the definition of the anisotropic Sobolev space, one can ensure that the term $\varphi^{-T*}\iota_V\ml{L}_V^{-1}A_{\chi'}(0)\varphi^{-T*}[\Sigma_2]$ has arbitrarily large Sobolev regularity outside a small neighborhood of $E_u^*$. See~\cite{DGRS18} for an explicit description of the relation between the Sobolev exponents and the size of the conical neighborhoods in the definition of the anisotropic Sobolev space. By construction, the term $A_{\tilde{\chi}}(0)\iota_V[\Sigma_2]$ belongs to $\mathcal{D}^{\prime 1}_{\Gamma}(M)$ where the cone $\Gamma$ does not intersect $N^*(\varphi^{-T_0}(\Sigma_1))$. Hence, up to enlarging the cone $\Gamma$ to include the small conical neighborhood of $E_u^*$, $\tilde{R}_2$ belongs to the space $\mathcal{D}^{\prime 1}_{\Gamma}(M)$. Unfortunately, this current does not have the expected wavefront set properties. Yet this problem can be handled as follows. By Theorem~\ref{t:hodge-derham} and as $[\Sigma_2]$ is exact, there exists $R_2\in\mathcal{D}^{\prime 1}_{N^*(\Sigma_2)}(M)$ such that $[\Sigma_2]= dR_2$. Moreover, as $d(R_2-\tilde{R}_2)=0$, one can find some smooth closed one-form $\omega$ and some $\theta\in\ml{D}^{\prime 0}_{\Gamma\cup N^*(\Sigma_2)} (M)$ such that $R_2=\tilde{R}_2+\omega+d\theta$. By Stokes formula, by the fact that $[\Sigma_1]$ is exact, and by noting that $\Gamma\cup N^*(\Sigma_2)$ does not intersect $N^*(\varphi^{-T_0}(\Sigma_1))$, we get the expected result.
\end{proof}

\begin{rema}
In our analysis, the exactness of $[\Sigma_1]$ was only necessary in the final step in view of replacing $\tilde{R}_2$ by the current $R_2$ with appropriate wavefront properties. We did not even use the closedness of $[\Sigma_1]$ before this step. 
\end{rema}

\begin{rema} As it was indicated to us by one of the referee, the above argument did not really need the fact that the curve $\Sigma_2$ is Legendrian if we make the appropriate correction for the value at $s=0$. Indeed, without the Legendrian assumption, one can write 
$$[\Sigma_2]=\left(\frac{\int_{\Sigma_2}\alpha}{\int_M\alpha\wedge d\alpha}\right)d\alpha+(\text{Id}-\pi_0^{(2)})([\Sigma_2]).$$
In particular, using that $\iota_Vd\alpha=0$, one has
\begin{multline*}\int_M\varphi^{T_0*}[\Sigma_1]\wedge A_{\tilde{\chi}}(z)\iota_Vd\alpha\\
+\int_M\varphi^{-T*}[\Sigma_1]\wedge \iota_V (\ml{L}_V+z)^{-1}A_{\chi'}(z)\varphi^{-T*}d\alpha =0.
\end{multline*}
Hence, we can replace $[\Sigma_2]$ by $(\text{Id}-\pi_0^{(2)})([\Sigma_2])$ in the above argument and the exact same proof works except that $d\tilde{R}_2$ will now be equal to $(\text{Id}-\pi_0^{(2)})([\Sigma_2])$ or equivalently
$$d\left(\tilde{R}_2+\left(\frac{\int_{\Sigma_2}\alpha}{\int_M\alpha\wedge d\alpha}\right)\alpha\right)=[\Sigma_2].$$
Hence, arguing as before, we can pick $R_2$ such that $dR_2=[\Sigma_2]$, $R_2\in\mathcal{D}^{\prime 1}_{N^*(\Sigma_2)}(M)$ and
\begin{equation}\boxed{\zeta_{\Sigma_1,\Sigma_2}(0)=-\int_M\varphi^{T_0*}([\Sigma_1])\wedge R_2+\frac{\int_{\Sigma_1}\alpha\int_{\Sigma_2}\alpha}{\int_M\alpha\wedge d\alpha}.}
\end{equation}
This gives the value at $0$ for general curves in $M=SX$. 
\end{rema}

\subsection{Proof of Theorem~\ref{t:zero}}\label{ss:proof-euler-zero} We are now ready to prove our Theorem on the value at $0$ which amounts to compute the value of $\mathbf{L}(c_1,c_2)$. In particular, we now suppose that $c_1$ and $c_2$ are homologically trivial and simple. The proof proceeds in three stages of increasing generality:
1) $(c_1,c_2)$ are points,
2) $c_1$ is a curve and $c_2$ a point, 3) $(c_1,c_2)$ are curves.

 We insist on the structure of our arguments: we always rely on the simpler case to deduce a more general case. Also $c_1$ and $c_2$ are not needed to be geodesic curves. The only thing that matters in the argument below is that they intersect transversally in $X$.

\begin{rema} Thanks to Remarks~\ref{r:tangent-surface} and~\ref{r:WF-bounding-surface} and to Proposition~\ref{p:value-at-0}, we will consider in this proof currents $T_1$ and $T_2$ having disjoint wavefront sets. In particular, according to Appendix~\ref{aa:product}, they will have well defined wedge product and we can write $d(T_1\wedge T_2)=dT_1\wedge T_2-T_1\wedge dT_2$ in view of applying Stokes formula. Thus all the manipulations we will do will be valid thanks to these observations even if we do not repeat them at every stages of the proof. 
\end{rema}

\subsubsection{Proof of Theorem~\ref{t:zero}: the case of points}\label{ss:trivial}

In the case where $c_1$ and $c_2$ are points, the proof of Theorem~\ref{t:zero} follows from the combination of Proposition~\ref{p:value-at-0} with

\begin{lemm}\label{l:constant} Suppose that $c_1$ and $c_2$ are points. Then one has
$$\chi(X)\mathbf{L}(c_1,c_2)=-1\ \text{if}\ c_1\neq c_2,$$
and
$$\chi(X)\mathbf{L}(c_1,c_2)=\chi(X)-1\ \text{otherwise}.$$
\end{lemm}

\begin{rema}Chantraine explained to us that the linking between $S_{c_1}^*X$ and $S_{c_2}^*X$ was equal to the inverse of the Euler characteristic and the Morse theoretic proof given in this paragraph was shown to us by Welschinger.

\end{rema}

\begin{proof} The geodesic curve $c_i$ is reduced to a point $q_i\in X$ for $i=1,2$. Hence, one has $\Sigma(c_i)=S_{q_i}^*X$. Recall from Lemma~\ref{l:boundary} that $[\Sigma(c_i)]$ is exact in that case and that thanks to Proposition~\ref{p:value-at-0}, $[\Sigma(c_i)]=dR_i$ for some $R_i\in\mathcal{D}^{\prime 1}_{N^*(\Sigma(c_i))}(S^*X)$. We shall write $R_i=R_{q_i}$ all along this proof in order to emphasize the dependence on the point $q_i\in X$. We now fix some $0\leqslant T_0<T_1$ such that $S_{q_2}^*X\cap\varphi^{-T_0}(S_{q_1}^*X)=\emptyset$, and we want to compute
$$\mathbf{L}(q_1,q_2)=\int_{S^*X} \varphi^{T_0*}([S_{q_1}^*X])\wedge R_{q_2}.$$

We begin with the case where $q_1\neq q_2$ for which one can take $T_0=0$ in the previous integral. We take $f$ to be a smooth Morse function which has no critical point at $q_1$. We denote its set of critical points by $\text{Crit}(f)$, and we define
$$S:=\left\{\left(q,\frac{d_qf}{\|d_qf\|}\right): q\notin\text{Crit}(f)\right\},$$
which is oriented by the orientation on $X$.

By lemma~\ref{l:boundary}, 
one finds that
$$d[S]=-\sum_{a\in\text{Crit}(f)}(-1)^{\text{ind}(a)}[S_a^*X]=-\sum_{a\in\text{Crit}(f)}(-1)^{\text{ind}(a)}dR_a.$$

As the intersection of $S_{q_1}^*X$ and $S$ is reduced to one point and taking into account the orientation of $S$ and $[S_{q_1}^*X]$ (see also Remark~\ref{r:tangent-surface}), we find that
\begin{multline*}1=\int_{S^*X} [S_{q_1}^*X]\wedge [S]=-\sum_{a\in\text{Crit}(f)}(-1)^{\text{ind}(a)}\int_{S^*X} R_{q_1}\wedge [S_a^*X]\\=-\sum_{a\in\text{Crit}(f)}(-1)^{\text{ind}(a)}\mathbf{L}(q_1,a).
\end{multline*}
Now, if we fix $a\in\text{Crit}(f)$ and if we modify the Morse function $f$ inside a small neighborhood of $a$, we can observe that the map $q\mapsto\mathbf{L}(q_1,q)$ is locally constant on $X\setminus\{q_1\}$ which is connected. Hence, $\mathbf{L}(q_1,a)$ is independent of the choice of $a$. Thanks to the 
Poincar\'e--Hopf formula, this yields
$$-1=\sum_{a\in\text{Crit}(f)}(-1)^{\text{ind}(a)}\mathbf{L}(q_1,a)=\chi(X)\mathbf{L}(q_1,q_2),$$
which concludes the proof when $q_1\neq q_2$.

Suppose now that $q_1=q_2$. In that case, we fix $f$ which has a local minimum at $q_1$ and no other critical points inside the disk bounded by $\Pi(\varphi^{-T_0}(S_{q_1}^*X))$. We also suppose that $f$ is constant on $\Pi(\varphi^{-T_0}(S_{q_1}^*X))$, say $f(q)=d_g(q,q_1)^2$. Then, we define
$$S:=\left\{\left(q,\frac{df(q)}{\|df(q)\|}\right): q\notin\text{Crit}(f)\right\}$$
which does not intersect $\varphi^{-T_0}(S_{q_1}^*X)$. Reproducing the above arguments, this yields
\begin{multline*}0=\int_{S^*X} \varphi^{T_0*}([S_{q_1}^*X])\wedge [S]=-\sum_{a\in\text{Crit}(f)}(-1)^{\text{ind}(a)}\int_{S^*X} \varphi^{T_0*}(R_{q_1})\wedge [S_a^*X]\\
=-\sum_{a\in\text{Crit}(f)}(-1)^{\text{ind}(a)}\mathbf{L}(q_1,a).
 \end{multline*}
Thanks to the case where $q_1\neq q_2$ and to the case of equality in Morse inequalities, we finally obtain 
$$0=-\mathbf{L}(q_1,q_1)+\frac{1}{\chi(X)}\sum_{a\in\text{Crit}(f)\setminus q_1}(-1)^{\text{ind}(a)}=-\mathbf{L}(q_1,q_1)+1-\frac{1}{\chi(X)},$$
which concludes the proof.

\end{proof}

\subsubsection{Proof of Theorem~\ref{t:zero}: 
$c_1$ is a curve, $c_2$ a point}\label{sss:trivial-geod} 
Using this first case, we will now be able to deal with the case where $c_1$ is a simple (non trivial) closed geodesic which is homologically trivial and where $c_2$ is reduced to a point $q_2$. For the sake of simplicity, we suppose that $c_1\cap c_2=\emptyset$ so that we can take $T_0=0$ in the definition of $\mathbf{L}(c_1,c_2)$.

We let $Y$ be the vector field given by Lemma~\ref{l:boundary-conormal} and take $S$ to be the surface defined in that Lemma. One has
$$d[S]=[\Sigma(c_1)]-\sum_{a\in\text{Crit}(Y)\cap X(c_1)}(-1)^{\text{ind}(a)}[S_a^*X].$$
If $q_2$ belongs to $X(c_1)$, one finds that
\begin{multline*}1=\int_{S^*X}[S]\wedge [S_{q_2}^*X]=\int_{S^*X}[\Sigma(c_1)]\wedge R_{q_2}\\
 -\sum_{a\in\text{Crit}(Y)\cap X(c_1)}(-1)^{\text{ind}(a)}\int_{S^*X}[S_a^*X]\wedge R_{q_2}.
\end{multline*}
From Lemma~\ref{l:constant}, we get
$$\int_{S^*X}[\Sigma(c_1)]\wedge R_{q_2}=1-\frac{1}{\chi(X)}\sum_{a\in\text{Crit}(Y)\cap X(c_1)}(-1)^{\text{ind}(a)}.$$
From Theorem~\ref{t:morse}, we find that, for $q_2\in X(c_1)$
$$\mathbf{L}(c_1,c_2)=\int_{S^*X}[\Sigma(c_1)]\wedge R_{q_2}=1-\frac{\chi(X(c_1))}{\chi(X)}.$$
If $q_2\notin X(c_1)$, $\int_{S^*X}[S]\wedge [S_{q_2}^*X]=0$ and the same argument gives
$$\mathbf{L}(c_1,c_2)=\int_{S^*X}[\Sigma(c_1)]\wedge R_{q_2}=-\frac{\chi(X(c_1))}{\chi(X)}.$$

\subsubsection{Proof of Theorem~\ref{t:zero}: both $c_1,c_2$ are curves}
We are now left with the case where both $c_1$ and $c_2$ are nontrivial simple closed geodesics which are homologically trivial. In that case, the difficulty comes from the fact that $c_1$ and $c_2$ may intersect each other and this is where we will use the full strength of the Poincar\'e-Hopf formula given in Theorem~\ref{t:morse}. For simplicity, we also suppose that\footnote{One may have $\sharp c_1\cap c_2=\infty$ is $c_1$ is equal to $c_2$ oriented in the converse sense. Recall that we supposed $\mathbf{c}_1$ and $\mathbf{c}_2$ to be distinct.} $\sharp c_1\cap c_2<\infty$ so that the intersection points are transverse by the geodesic equation and so that we can one more time take $T_0=0$ in the definition of $\mathbf{L}(c_1,c_2)$. For simplicity, we set $X_1:=X(c_1)$ and $X_2:=X(c_2)$ in this paragraph.

\begin{rema}At the end of this paragraph, we will briefly explain how to adapt the argument when $\mathbf{c}_1=\mathbf{c}_2^{-1}$ so that $c_1$ and $c_2$ are twice the same curve but with different orientations.
\end{rema}

We let $Y$ be the vector field given by Lemma~\ref{l:boundary-conormal} with $c=c_1$.

The goal of the next Lemma
will be to deform the vector field $Y$ from Lemma~\ref{l:boundary-conormal}
in such a way that the resulting surface introduced in Lemma~\ref{l:boundary-conormal},
\begin{equation}\label{eq:surface}
S:=\left\{\left(q,\frac{Y(q)^{\flat}}{\|Y(q)^\flat\|}\right): q\in X(c_1)\setminus\operatorname{Crit}(Y)\right\},
\end{equation}
intersects $\Sigma_2$ nicely:

\begin{lemm}\label{l:deformY}  
One can smoothly deform $Y$ in the interior of $X_1$ so that 
\begin{enumerate}
 \item the new vector field has the same number of critical points in $X_{1}$ which are still of real hyperbolic type with preserved index, but they do not lie in $X_1\cap c_2$,
  \item the surface $S$ associated with the resulting vector field intersects $S^*X_1\cap \Sigma_2$ at finitely many points and the intersection is transversal at these points,
 \item the vector field $\tilde{Y}$ induced on $c_2\cap X_1$ (as in Theorem~\ref{t:morse}) has hyperbolic zeroes (both for inward and outward zeroes),
 \item the boundary formula~\eqref{e:boundary-formula}
\begin{eqnarray*}
d[S]=\Sigma_1-\sum_{a\in \operatorname{Crit}(Y)\cap X_1}(-1)^{\operatorname{Ind}(a)}[S^*_aX],
\end{eqnarray*} 
remains true with this new vector field,
 \item one has
 \begin{equation}\label{e:complicated-intersection}\int_{S^*X} [S]\wedge [\Sigma_2]=-\sum_{a\in\operatorname{Crit}_{\operatorname{out}}(\tilde{Y})\cap (X_1\cap c_2)}(-1)^{\operatorname{Ind}(a)}-\frac{1}{2}\chi(c_1\cap c_2),
  \end{equation}
 where $\operatorname{Crit}_{\operatorname{out}}(\tilde{Y})$ denotes the zeroes of $\tilde{Y}$ such that $Y$ points outside $X_2$ at these points.
\end{enumerate}
\end{lemm}

\begin{proof}
 Let us modify the initial vector field $Y$ from Lemma~\ref{l:boundary-conormal} into a new vector field so that the hyperbolic zeroes become disjoint of $c_2$ but their index is preserved. 
We only need to discuss the critical points that potentially belong to $c_2$. For any such point $a$, one can associate $b(a)\in X_1\setminus c_2$. We make the assumption that for every critical points $a\neq a'$ lying on $c_2$, we choose $b(a)\neq b(a')$.
 
 To every pair of points $(a,b(a))$, we associate a smooth curve (with no selfintersection points) $\gamma_a \subset X_1$ joining $a$ to $b(a)$ and such that $\gamma_a$ and $\gamma_{a'}$ do not intersect each other if $a\neq a'$ and never intersect the boundary $c_1$. One can then find tubular neighborhoods $O_a$ of these curves which are diffeomorphic to $\IR\times(0,1)$, which lie inside the interior of $X_1$ and which do not intersect each other. On each of these neighborhoods, one can build a diffeomorphism $\kappa_a$ which sends $a$ to $b(a)$ which is equal to the identity near the boundary of $O_a$. Gluing these ``local'' diffeomorphisms together by taking the identity outside the $O_a$ yields a global diffeomorphism $\kappa:X\mapsto X$. Taking the pullback of the initial vector field $Y$ under $\kappa$, we find a new vector field with the same number of hyperbolic zeroes in $X_1$ and the index of each zero is preserved by the diffeomorphism $\kappa$. The boundary formula~\eqref{e:boundary-formula} is still satisfied since the diffeomorphism $\kappa$ is the identity near $c_1$ so our modification does not affect the validity of equation~\ref{e:boundary-formula}. For simplicity, we still denote by $Y$ the modified vector field.

  In view of verifying (2) and (3), we first note that, as $c_1$ and $c_2$ intersect transversally, the surface $S$ associated with $Y$ does not intersect $\Sigma_2$ above points in $c_1\cap c_2$ (recall that $S$ consists of vectors pointing normally inside $X_1$ above points in $c_1$. Hence, the intersection between $S$ and $S^*X_1\cap\Sigma_2$ occurs necessarily away from $S^*X|_{c_1}$. By a small perturbation\footnote{This amounts to perturb the vector field on a finite number of compact intervals.} of the vector field $Y$ near $c_2$ (and in the interior of $X_1$), we can ensure that the number of intersection points in $S\cap \Sigma_2$ is finite as stated in point (2). We can in fact also ensure that there are only finitely many points where the resulting vector field points normally either inside or outside $X_2$. 

In view of verifying the transversality assumption and the hyperbolicity property, we fix a point $x_0=(q_0,p_0)$ where the new surface $S$ intersects $\Sigma_2$ or where $S$ is pointing normally outside $X_2$. Let $(U_0\subset X,\kappa_0)$ be a small chart near $q_0$. We can choose local coordinates $(\tilde{q}_1,\tilde{q}_2)$ such that $X_2$ is given in this local chart by the local coordinates of \S~\ref{r:ex-orientation}, i.e.
$X_{2}:=\{(\tilde{q}_1,\tilde{q}_2):\tilde{q}_2\geqslant 0\}.$ In this local chart, we know that $Y(0,0)$ is proportional to $\partial_{\tilde{q}_2}$. Hence locally, up to multiplying the vector field by a positive constant near $0$, it reads $Y(\tilde{q})=\pm\partial_{\tilde{q}_2}+f_1(\tilde{q})\partial_{\tilde{q}_1}+f_2(\tilde{q})\partial_{\tilde{q}_2}$ with $f_1(0)=f_2(0)=0$. The $+$ case corresponds to vectors pointing inside $X_2$ (thus intersection points with $\Sigma_2$) while the $-$ case corresponds to outward pointing vectors. If we consider the case of inward pointing vectors, recall using the conventions of~\S~\ref{r:ex-orientation} that we can write locally
$$\Sigma_2:=\left\{(\tilde{q}_1,\tilde{q}_2,\phi):\  \tilde{q}_2=0,\ \phi=\pi/2\right\}$$
while one has locally
$$S=\left\{\left(\tilde{q}_1,\tilde{q}_2,\phi\right):\phi=\phi(\tilde{q}):=\text{arccos}\left(\frac{f_1(\tilde{q})}{\sqrt{f_1(\tilde{q})^2+\left(1+f_2(\tilde{q})\right)^2}}\right)\right\}.$$
In particular, the conormal to $\Sigma_2$ is proportional to $d\tilde{q}_2\wedge d\phi$ while the one to $S$ is given by $d\phi-\partial_{\tilde{q}_1}\phi(\tilde{q})d\tilde{q}_1-\partial_{\tilde{q}_2}\phi(\tilde{q})d\tilde{q}_2$. Hence, the intersection would be transversal if $\partial_{\tilde{q}_1}\phi(0)\neq 0$. If we perturb the vector field in such a way that $\partial_{\tilde{q}_1}f_1(0)\neq 0$ and $\partial_{\tilde{q}_2}f_2(0)\neq 0$, then we get a transversal intersection at $0$. Note that this may result in adding locally extra zeroes to the vector field $\tilde{Y}$ along the curve $c_2\cap X_1$ (if the tangency is of higher order) but we can choose them in such a way that all the intersections with $\Sigma_2$ are transverse and generate hyperbolic zeroes on $c_2\cap X_1$. This procedure works as well for zeroes of $\tilde{Y}$ corresponding to points where $Y$ points normally outside $X_2$.

By partition of unity, we can use this procedure to make the intersection $S\cap \Sigma_2$ transversal at every intersection point. This concludes the proof of point (2) and we note that in the process, we also ensured that point (3) is satisfied. Note also that all along this construction, we can make perturbations that are away from the critical points of $Y$ so that point (1) is still satisfied. As we did not modify the vector field $Y$ near the boundary of $S$, the equation~(\ref{e:boundary-formula}) is also satisfied. 

Hence, we are left with the proof of point (5). We start by working locally near an intersection point and use the above expressions for $\Sigma_2$ and $S$ to write down locally
$$[\Sigma_2]=\delta_0(\tilde{q}_2)\delta_0\left(\phi-\frac{\pi}{2}\right)d\tilde{q}_2\wedge d\phi,$$
and 
$$[S]=\delta_0(\phi-\phi(\tilde{q}))(d\phi-\partial_{\tilde{q}_1}\phi(\tilde{q})d\tilde{q}_1-\partial_{\tilde{q}_2}\phi(\tilde{q})d\tilde{q}_2).$$
Hence, the intersection
\begin{multline*}\int_{S^*U_0}[S]\wedge[\Sigma_{2}]\\
=-\int_{\mathbb{R}^2\times\mathbb{S}^1}\delta_0(\tilde{q}_2)\delta_0\left(\phi-\frac{\pi}{2}\right)\delta_0(\phi-\phi(\tilde{q}_1))\partial_{\tilde{q}_1}\phi(\tilde{q})d\tilde{q}_1d\tilde{q}_2d\phi\\
=-\frac{\partial_{\tilde{q}_1}\phi(0)}{|\partial_{\tilde{q}_1}\phi(0)|}.
\end{multline*} 
Hence, it is equal to $1$ if $\partial_{\tilde{q}_1}f_1(0)>0$ and to $-1$ otherwise. We can now turn things globally and compute
$$\int_M[S]\wedge[\Sigma_2]$$
which is an alternate sum of $+1$ and $-1$ thanks to the above discussion. Each of these contributions correspond to a point of the curve $c_2\cap X_1$ where the new vector field $Y$ points normally inside $X_2$ (here we are using the fact that $\mathbf{c}_2$ is nontrivial). Recall that the vector field $Y$ induces a vector field $\tilde{Y}$ tangent to the curve $c_2\cap X_1$ and each contribution to the integral will come from the points where the induced vector field
$\tilde{Y}$ vanishes and where the vector field $Y$ is pointing inside $X_2$. When the point is attracting (resp. repulsing), it will give a contribution $-1$ (resp. $1$) to the integral thanks to the local calculation above. Hence, one finds
\begin{equation}\label{e:last-intersection}\int_{S^*X}  [S]\wedge [\Sigma_2]=\sum_{a\in\text{Crit}_{\text{in}}\left(\tilde{Y}\right)\cap \left( X_1\cap c_2\right)}(-1)^{\text{ind}(a)},\end{equation}
where $\text{Crit}_{\text{in}}(\tilde{Y})$ is the set of critical points of $\tilde{Y}$ where $Y$ points inside $X_2$. We will now apply the Poincar\'e-Hopf formula of Lemma~\ref{r:morse-1d} to the compact one-dimensional submanifold $X_1\cap c_2$ (this is diffeomorphic to a finite union of compact intervals). The boundary $c_1\cap c_2$ may be non empty and $Y$ is pointing inward on $c_1\cap c_2$ since the initial vector field $Y$ coincides with the inward normal of $X_1$ on $c_1$ and the intersection $c_1\cap c_2$ is transverse. Hence, according to Lemma~\ref{r:morse-1d}, one has
\begin{equation}\label{e:hopf-segment}
\sum_{a\in\text{Crit}\left(\tilde{Y}\right)\cap \left( X_1\cap c_2\right)}
(-1)^{\text{ind}(a)}
=-\frac{1}{2}\chi\left(c_1\cap c_2\right).
\end{equation}
Note that the right-hand side is an integer. Indeed, $c_1$ and $c_2$ are both homologically trivial by assumption. In particular, as the curves are transverse to each other, $\int_X[c_1]\wedge[c_2]=0$ and the two curves intersect each other an even number of times. Equation~\eqref{e:hopf-segment} applied to~\eqref{e:last-intersection} yields
\begin{eqnarray*}\int_{S^*X}  [S]\wedge [\Sigma_2] &=&-\sum_{a\in\text{Crit}_{\text{out}}\left(\tilde{Y}\right)\cap \left( X_1\cap c_2\right)}(-1)^{\text{ind}(a)}-\frac{1}{2}\chi\left(c_1\cap c_2\right).
\end{eqnarray*}
\end{proof}

With the vector field $Y$ given by Lemma~\ref{l:deformY} at hand and the resulting surface $S$ defined in Lemma~\ref{l:boundary-conormal}, we can now compute $\mathbf{L}(c_1,c_2)$ when $c_1$ and $c_2$ are nontrivial simple geodesics and thus conclude the proof of Theorem~\ref{t:zero}. Recalling that $[\Sigma_2]=dR_2$ from Proposition~\ref{p:value-at-0}, one can rewrite the term $\int_{S^*X}  [S]\wedge [\Sigma_2] $ on the left-hand side of~\eqref{e:complicated-intersection} as
\begin{multline*}\int_{S^*X}  [S]\wedge [\Sigma_2] =\int_{S^*X}d[S]\wedge R_2 \text{ by Stokes,}\\
=\underset{\mathbf{L}(c_1,c_2)}{\underbrace{\int_{S^*X}[\Sigma_1]\wedge R_2}}-\sum_{a\in\text{Crit}(Y)\cap X_1}(-1)^{\text{ind}(a)}\int_{S^*X}[S_a^*X]\wedge R_2,
\end{multline*}
where we used equation~(\ref{e:boundary-formula}) given by item (4) from Lemma~\ref{l:deformY}, to write down the second equality. Hence by equation ~\eqref{e:complicated-intersection}, one has
\begin{multline*}
 \mathbf{L}(c_1,c_2)=-\frac{1}{2}\chi\left(c_1\cap c_2\right)-\sum_{a\in\text{Crit}_{\text{out}}\left(\tilde{Y}\right)\cap \left( X_1\cap c_2\right)}(-1)^{\text{ind}(a)}\\+\sum_{a\in\text{Crit}(Y)\cap X_1}(-1)^{\text{ind}(a)}\int_{S^*X}[S_a^*X]\wedge R_2.
\end{multline*}
Equivalently, this can be rewritten using Stokes formula as
\begin{multline*}
 \mathbf{L}(c_1,c_2)=-\frac{1}{2}\chi\left(c_1\cap c_2\right)-\sum_{a\in\text{Crit}_{\text{out}}\left(\tilde{Y}\right)\cap \left( X_1\cap c_2\right)}(-1)^{\text{ind}(a)}\\+\sum_{a\in\text{Crit}(Y)\cap X_1}(-1)^{\text{ind}(a)}\int_{S^*X} [\Sigma_2]\wedge R_a.
\end{multline*}
We can now replace each term $\int_{S^*X} [\Sigma_2]\wedge R_a$ on the right hand side by the value we found for the linking number where one geodesic is reduced to a point. This yields
\begin{multline*}
 \mathbf{L}(c_1,c_2)=-\frac{1}{2}\chi\left(c_1\cap c_2\right)-\frac{\chi(X_2)}{\chi(X)}\sum_{a\in\text{Crit}(Y)\cap X_1}(-1)^{\text{ind}(a)}\\
 -\sum_{a\in\text{Crit}_{\text{out}}\left(\tilde{Y}\right)\cap \left( X_1\cap c_2\right)}(-1)^{\text{ind}(a)}+\sum_{a\in\text{Crit}(Y)\cap X_1\cap X_2}(-1)^{\text{ind}(a)}.
\end{multline*}
The second term $\sum_{a\in\text{Crit}(Y)\cap X_1}(-1)^{\text{ind}(a)}$ on the righthand side can be simplified using Poincar\'e-Hopf formula for surfaces with boundary (using that $Y$ points inside $X_1$):
\begin{multline*}
 \mathbf{L}(c_1,c_2)=-\frac{1}{2}\chi\left(c_1\cap c_2\right)-\frac{\chi(X_2)\chi(X_1)}{\chi(X)}\\
 -\sum_{a\in\text{Crit}_{\text{out}}\left(\tilde{Y}\right)\cap \left( X_1\cap c_2\right)}(-1)^{\text{ind}(a)}+\sum_{a\in\text{Crit}(Y)\cap X_1\cap X_2}(-1)^{\text{ind}(a)}.
\end{multline*}
We would now like to use Theorem~\ref{t:morse} one more time to simplify the last two sums as $Y$ is pointing normally inside $X_1\cap X_2$ on $c_1$. The only problem is that $X_1\cap X_2$ does not have a smooth boundary in general: there are corners at the points in $c_1\cap c_2$. This can be solved by smoothing these corners as illustrated in Figure~\ref{f:intersection}. 
\begin{figure}[ht]
\includegraphics[scale=0.24]{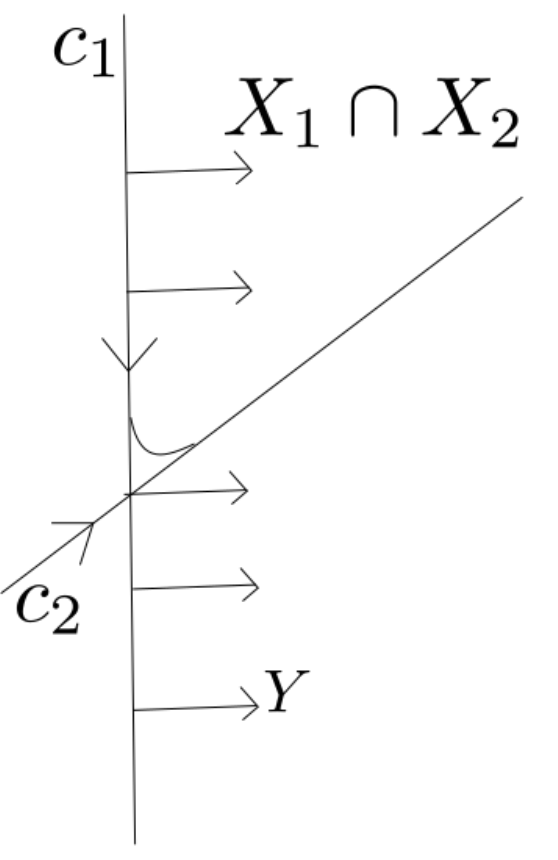}
\centering
\caption{\label{f:intersection}Smoothing corners.}
\end{figure}
As $c_1$ and $c_2$ intersect transversally, $Y$ will not point normally outward the resulting surfaces so that we can apply Poincar\'e-Hopf Theorem. The resulting manifold has the same Euler characteristic as $X_1\cap X_2$ and we can conclude using Theorem~\ref{t:morse} with the smoothed version of $X_1\cap X_2$. This leads to the expected formula:
$$\mathbf{L}(c_1,c_2)=\chi(X_1\cap X_2)-\frac{1}{2}\chi\left(c_1\cap c_2\right)-\frac{\chi(X_2)\chi(X_1)}{\chi(X)}.$$
\begin{rema} The case where $c_1$ and $c_2$ are two simple homologically trivial curves with the same support but not the same orientation, i.e. $X_2=\overline{X\setminus X_1}.$ In that case, the full strength of Lemma~\ref{l:deformY} is not necessary as $S$ do not intersect $\Sigma_2$ so that we can directly write $\int_{S^*X}[S]\wedge[\Sigma_2]$ and derive the result in that case using Stokes formula.
\end{rema}

\section{The case of non simple geodesics}\label{s:intersection}

We note that up to Proposition~\ref{p:value-at-0} the fact that $c_1$ and $c_2$ are simple curves is not used. The relevance of this assumption only appeared at the end of Section~\ref{s:morse} where we needed to work with the surfaces $X(c_i)$ whose oriented boundary is given by $c_i$. We will now discuss the case where $c_1$ and $c_2$ are not anymore simple geodesic curves but still homologically trivial and we will explain how to compute $\mathbf{L}(c_1,c_2)$ in that case using the formalism of constructible functions appearing in symplectic topology. The main statement towards this is Theorem~\ref{t:linking-integer-microlocal} below.

Using the conventions of Proposition~\ref{p:value-at-0}, we want to compute, for $T_0+T_0'>0$ small enough and for two homologically trivial closed geodesics $c_1$ and $c_2$,
$$\mathbf{L}(c_1,c_2)=\int_M\varphi^{(T_0+T_0')*}([\Sigma(c_1)])\wedge R_2,$$
where $[\Sigma(c_2)]=dR_2$. Equivalently, one has
$$\mathbf{L}(c_1,c_2)=\int_M\varphi^{T_0*}([\Sigma(c_1)])\wedge R_2^{-T_0'},$$
where $\varphi^{-T_0'*}([\Sigma(c_2)])=dR_2^{-T_0'}.$ In the following, we shall write things a little bit more compactly by setting $[\Sigma_1^{T_0}]=\varphi^{T_0*}([\Sigma(c_1)])$ which is the current of integration over the smooth submanifold $\varphi^{-T_0}(\Sigma(c_1))$. Similarly, $[\Sigma_2^{-T_0'}]=dR_2^{-T_0'}$ will denote the current of integration over the submanifold $\varphi^{T_0'}(\Sigma(c_2))$. In both cases, we denote by $\tilde{c}_i$, the projection (via the canonical projection) on $X$ of these two curves of $S^*X$. 

\subsection{First properties of the perturbed curves $\tilde{c}_1$ and $\tilde{c}_2$}\label{sss:goodcurves}

The first difficulty is that the new curves may have complicated intersections and selfintersections. This is solved by the following statement:

\begin{prop}\label{p:findingrightcurves}
There exist $T_0>0$ and $T_0'>0$ small enough (with $T_0'$ depending on $T_0$), such that the following properties hold:
\begin{itemize}
 \item for $i=1,2$, one can find some smooth map $\tilde{c}_i:\IR/\ell_i\IZ\rightarrow X$ (with $\ell_i>0$) representing the projected curve $\tilde{c}_i$ and such that $\tilde{c}_i'(t)\neq 0$ for every $t\in  \IR/\ell_i\IZ$;
 \item for $i=1,2$, for every $t\in[0,\ell_i)$,
 \begin{equation}\label{e:selfintersection}
  \sharp\{s\in[0,\ell_i):\ s\neq t\ \text{and}\ \tilde{c}_i(t)=\tilde{c}_i(s)\}\leqslant 1. 
 \end{equation}
In other words, the selfintersections of each curve $\tilde{c}_i$ is made of double points\footnote{This will be referred as a simple selfintersection point.};
 \item if $q_0=\tilde{c}_1(t)=\tilde{c}_2(s)$ for some $(t,s)\in[0,\ell_1)\times[0,\ell_2)$, then $q_0$ is neither a double point of $\tilde{c}_1$, nor of $\tilde{c}_2$.
\end{itemize}
Finally, if $\mathbf{c}_2$ (resp. $\mathbf{c}_1$) is trivial in $\pi_1(X)$, $T_0'$ (resp. $T_0$) can be taken equal to $0$ and\footnote{When $c_1$ and $c_2$ are distinct points, we can take $T_0=T_0'=0$ but we already treated this case in Lemma~\ref{l:constant}.} $T_0>0$ (resp. $T_0'> 0$) such that $\tilde{c}_1\cap c_2=\emptyset$ (resp. $\tilde{c}_2\cap c_1=\emptyset$).
\end{prop}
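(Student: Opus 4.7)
The plan is to realize the curves $\tilde c_i$ as the projections to $X$ of small Hamiltonian pushes of the Legendrians $\Sigma(c_i)$ along the geodesic flow, specifically
$$\tilde c_1=\Pi\circ\varphi^{-T_0}(\Sigma(c_1)),\qquad \tilde c_2=\Pi\circ\varphi^{T_0'}(\Sigma(c_2)),$$
which is consistent with the currents $[\Sigma_1^{T_0}]$ and $[\Sigma_2^{-T_0'}]$ entering $\mathbf{L}(c_1,c_2)$, and then to verify each of the four conclusions as a genericity statement in the parameters $(T_0,T_0')$. For the smoothness and nonvanishing derivative I would use the horizontal/vertical basis $(e_0,e_1,e_3)$ of $TS^*X$ from \S\ref{aa:dim2} together with the explicit form of $d\varphi^t$ in Remark~\ref{r:explicit-expression}. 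If $\mathbf{c}_i$ is nontrivial then $T\Sigma(c_i)=\mathbb{R}e_1$ is horizontal, hence transverse to the vertical kernel of $d\Pi$, and the projection remains an immersion for all $t$ close to $0$. If $\mathbf{c}_i$ is trivial then $T\Sigma(c_i)=\mathbb{R}e_3$ is vertical, but the antidiagonal entries of the matrix in Remark~\ref{r:explicit-expression} are strictly positive for $t>0$ thanks to negative curvature, so $d\varphi^t(e_3)$ acquires a nonzero horizontal component as soon as $t>0$. This explains why one must insist $T_0>0$ (resp.\ $T_0'>0$) when the corresponding class is trivial.

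For the self-intersection condition~\eqref{e:selfintersection} I would invoke a one-parameter transversality argument on the evaluation map
$$F_i^{(\tau)}:(s,t)\in(\mathbb{R}/\ell_i\mathbb{Z})^2\setminus\Delta\longmapsto(\tilde c_i^{(\tau)}(s),\tilde c_i^{(\tau)}(t))\in X\times X,$$
whose preimage of the diagonal enumerates the self-intersections at parameter $\tau$. For $\tau=0$ the geodesic $c_i$ has only finitely many self-intersections, each transverse because two distinct branches of $c_i$ meet with distinct tangent directions in $T_qX$. A triple or higher self-intersection of $\tilde c_i^{(\tau)}$ would correspond to three orthogonal geodesic arcs issued from $\Sigma(c_i)$ meeting at a common point in $X$, a non-persistent configuration under variation of $\tau$ thanks to the absence of conjugate points in negative curvature; hence the set of bad $\tau$ is discrete and we pick $T_0$, $T_0'$ in its complement. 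When $\mathbf{c}_i$ is trivial, $\tilde c_i^{(\tau)}$ is a small geodesic circle of radius $\tau$ around the point $c_i$, which has no self-intersection for $|\tau|$ small enough since small geodesic disks in $X$ are topological disks.

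For the compatibility condition that $\tilde c_1\cap\tilde c_2$ avoids the self-intersection points of each curve, I would argue similarly: the intersection $\tilde c_1\cap\tilde c_2$ projects from the transverse intersection in $S^*X$ of $\varphi^{-T_0}(\Sigma(c_1))$ with $\varphi^{T_0'}(\Sigma(c_2))$ guaranteed by Lemma~\ref{l:transversality-geodesic}, and both this finite set and the finite sets of self-intersections of $\tilde c_1,\tilde c_2$ depend smoothly on $(T_0,T_0')$. Since the self-intersections of $\tilde c_i$ depend only on the parameter of that curve while intersections with the other curve depend on both parameters, we can fix $T_0$ first and then adjust $T_0'$ in the complement of a discrete set so as to make the three finite sets pairwise disjoint. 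In the trivial case $\mathbf{c}_2$ (resp.\ $\mathbf{c}_1$) one has $\tilde c_2=\{c_2\}$ (a point), and it suffices to choose $T_0$ so that $\tilde c_1$ avoids this point; this holds for $T_0$ outside a discrete subset of $(0,\varepsilon)$, because $c_1$ meets $c_2$ at most finitely often and a parallel curve to $c_1$ at a generic small distance $T_0$ avoids the fixed point $c_2$.

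The main technical obstacle is the second step, because the perturbation is constrained to the one-parameter Hamiltonian family induced by the geodesic flow rather than a fully generic perturbation of $c_i$, so the usual Sard--Smale apparatus does not apply directly. The rescue is the geometry of negatively curved surfaces: in the universal cover two distinct geodesics meet at most once, which rules out persistent triple intersections of parallel curves and guarantees that the bad parameter set is at most discrete.
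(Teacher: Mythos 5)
Your overall architecture is the same as the paper's: realize $\tilde{c}_i$ as the projection of the flowed Legendrian, read off smoothness and the immersion property from the horizontal/vertical splitting and Remark~\ref{r:explicit-expression}, and then tune $T_0$ first and $T_0'$ second so that the finitely many self-intersections and mutual intersections are pairwise separated. Your handling of the first and third bullets and of the trivial classes is essentially the paper's argument (one small slip: Lemma~\ref{l:transversality-geodesic} concerns the intersection of the two Legendrians in $S^*X$, i.e.\ the feet of connecting geodesic arcs, not the crossings of the projected curves $\tilde{c}_1\cap\tilde{c}_2$ in $X$, so it is not the right reference there; those crossings are finite and transverse simply because distinct geodesic branches of $c_1$ and $c_2$ cross transversally and this persists under the small push).

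The gap is in the second bullet, which is the crux. You assert that a triple point of $\tilde{c}_i^{(\tau)}$ is ``a non-persistent configuration\dots thanks to the absence of conjugate points'' and that ``two distinct geodesics meet at most once in the universal cover'' makes the bad parameter set discrete. Neither assertion is a proof: absence of conjugate/focal points gives nonsingularity of the normal exponential map of each individual branch, but says nothing by itself about the concurrency of three parallels, and the statement that two geodesics meet at most once concerns the geodesics, not their equidistant curves. The missing argument (which is what the paper supplies) is local and elementary: if $q_0$ is a self-intersection point of $c_i$ of multiplicity $\geqslant 3$ with branches $\gamma_1,\gamma_2,\gamma_3$, a common point $\tilde{q}$ of the three pushed branches at parameter $\tau$ would be at distance $\tau$ from each $\gamma_j$, hence would lie both on the locus equidistant from $\gamma_1,\gamma_2$ and on the locus equidistant from $\gamma_2,\gamma_3$; near $q_0$ these are two distinct curves through $q_0$ (the signed distance functions $\rho_j$ to the branches have pairwise independent differences of differentials at $q_0$ because the three unit conormals there are distinct), so they meet only at $q_0$, contradicting $\rho_j(\tilde{q})=\tau>0$. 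This in fact yields the stronger conclusion that \emph{every} sufficiently small $\tau>0$ works, not merely a generic one; without some such computation your claim that the bad set is discrete is unsubstantiated, since the perturbation is confined to the one-parameter geodesic-flow family and no Sard-type argument is available, as you yourself note.
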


\begin{proof} Let us explain how to find $T_0$ and $T_0'$ with the above properties. The first point is clear and one only needs to discuss the two other items. We start by acting on $c_1$ (i.e. we will fix the range of $T_0$). We would like to remove all the selfintersections of the curve $c_1$ that correspond to points with multiplicity $>2$. We note that any such point $q_0$ of the curve $c_1$ is isolated in the sense that one can find some $r>0$ such that $B(q_0,r)$ contains no other selfintersection point of $c_1$. 
By picking $T_0>0$ small enough and by applying the flow $\varphi^t$ to the curves $\Sigma(c_1)$ and $\Sigma(c_2)$, we can argue by contradiction to show that any point $q_0$ of multiplicity $>2$ can be transformed in a family of double points. By compactness, this allows to find some $\tilde{T}_0>0$ such that, for any $0<T_0,T_0'\leqslant\tilde{T}_0$, the second property holds for $\tilde{c}_1$ and $\tilde{c}_2$.

We now fix some $T_0>0$ so that the curve $\tilde{c}_1(t)$ has only simple selfintersection points. Taking $T_0'>0$ small enough, we saw that the curve $\tilde{c}_2$ has also only simple selfintersection points that correspond to the perturbation of selfintersection points of the initial curve $c_2$. Then, one can verify that, by eventually taking $T_0'>0$ slightly smaller in a way that depends on $\tilde{c}_1$ (thus on $T_0$), none of these new self intersection points belong to the curve $\tilde{c}_1$ and $\tilde{c}_2$ does not intersect the selfintersection points of $\tilde{c}_1$. We note that, when $\mathbf{c}_2$ is a trivial homotopy class, one can in fact take $T_0'=0$ (but not $T_0=0$ in general) as we can choose $T_0>0$ such that $c_2\notin\tilde{c}_1$.
\end{proof}

\subsection{Decomposing the curve $\tilde{c}_i$ into elementary pieces}\label{sss:decomposition}

Our goal in this paragraph is to provide some algorithm which allows to decompose
each curve $\tilde{c}_1,\tilde{c}_2$ as a union of simple, closed, piecewise smooth curves in view of applying the results of Section~\ref{s:morse} to these elementary curves.
We shall verify afterwards that this decomposition leads to a decomposition of the Legendrian curves $\Sigma_1^{T_0},\Sigma_2^{-T_0^\prime}$ (lifting $\tilde{c}_1,\tilde{c}_2$) to $S^*X$ into a sum of conormals.

Let $i\in\{1,2\}$. Our algorithm is based on the construction of a nice function $f_i:X\mapsto \mathbb{Z}$ which takes constant value in each connected component of $X\setminus\tilde{c}_i$. Even if the next definition may slightly differ from what can be found in the literature~\cite[p.~5]{CGR}, 
such a function is often referred to as a \emph{constructible function}:

\begin{def1}\label{d:fconstruct} Let $i\in\{1,2\}$ and suppose that $\mathbf{c}_i$ is homologically trivial. Write
$$X\setminus\tilde{c}_i=\bigsqcup_{j\in J} \Omega_j,$$
where each $\Omega_j$ is an open connected subset of $X$ and where $\tilde{c}_i$ is the curve from Proposition~\ref{p:findingrightcurves}. We say that $f_i:X\setminus\tilde{c}_i\rightarrow \mathbb{Z}_+$ is a constructible function associated with $\tilde{c}_i$ if
\begin{itemize}
 \item $f_i^{-1}(0)=\Omega_{j_0}$ for some $j_0\in J$;
 \item there exists $x_0\in \Omega_{j_0}$ such that, for every $j\in J$, for every $y\in \Omega_j$, 
 $$f_i(y)=\int_X [\tilde{c}_i]\wedge[\gamma],$$
where $\gamma$ is any smooth path going from $x_0$ to $y$ which is transverse to $\tilde{c}_i$.
\end{itemize}
\end{def1}

From the definition, the existence of such a constructible function is almost immediate when $\mathbf{c}_i$ is homologically trivial.
 Just start by fixing some open connected component $\Omega_{j_1}$ of $X\setminus\tilde{c}_i$ and some point $x_1$ in $\Omega_{j_1}$. Then, given some $j\in J$ and some $y\in\Omega_j$, we define
$\tilde{f}_i(y)=\int_X  [\tilde{c}_i]\wedge[\gamma],$
where $\gamma$ is any smooth path going from $x_1$ to $y$ which is transverse to $\tilde{c}_i$ and the result does not depend on the choice of $\gamma$ by the triviality of $\tilde{c}_i$ in homology. The procedure we follow is illustrated in figure~\ref{f:knot} and, up to adding some constant, we can ensure that the function is nonnegative as expected.
\begin{figure}[ht]
\includegraphics[scale=0.3]{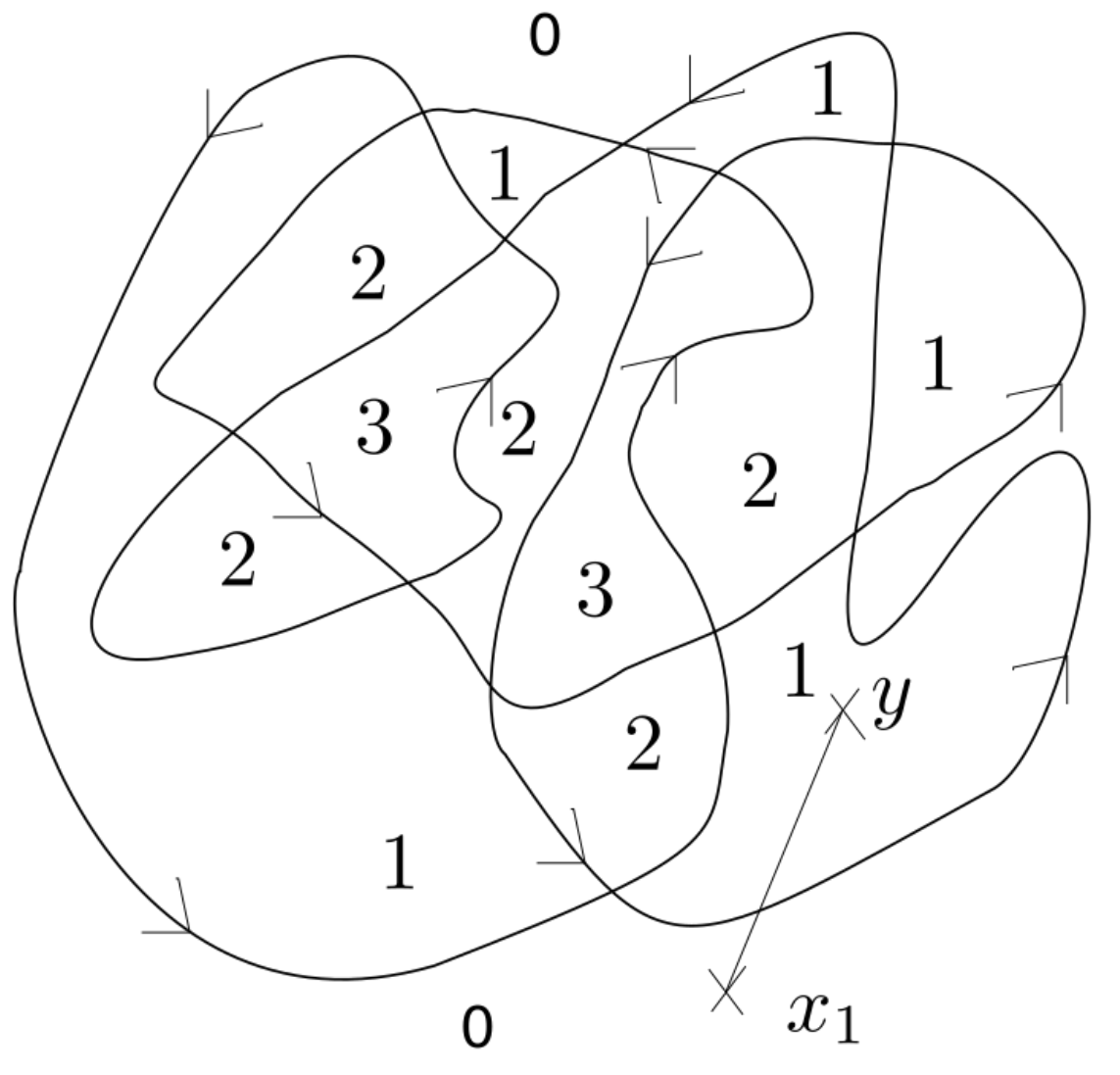}
\centering
\caption{\label{f:knot}Values of a constructible function.}
\end{figure}


Now let us define an algorithm which extracts surfaces from the constructible function $f_i$. These surfaces are going to bound the decomposition of the curve $\tilde{c}_i$ we are looking for. We also note that $f_i$ is defined on $X\setminus \tilde{c}_i$ for the moment. In particular, the sets $U_{i,j}:=\{f_i\geqslant j\}$ are open in $X$ and they have piecewise smooth boundaries. The following construction comes from Euler integration and motion sensing as in~\cite{BG,CGR}. 
Observe first that on $X\setminus \tilde{c}_i$, we have the identity:
$$f_i=\sum_{j=0}^\infty j \mathbf{1}_{\{f_i=j \}}=\sum_{j=1}^\infty \mathbf{1}_{\{ f_i\geqslant j \}}  $$
where both sums are finite since $f_i$ takes finitely many values.
If we set $X_{i,j}=\overline{U_{i,j}}$, we may extend $f_i$ to the whole manifold $X$ by the formula 
$$ f_i= \sum_{j=1}^\infty \mathbf{1}_{X_{i,j}}.$$
Each $X_{i,j}$ is a smooth manifold with piecewise smooth boundary $\tilde{c}_{i,j}:=\partial X_{i,j}$. Note that the singularities of the boundary only occur at the selfintersection points of the curve $\tilde{c}_i$. We have the following 
 chains of inclusions
$$X_{i,\sup(f_i)}\subset\dots\subset X_{i,0}=X.$$
Note that each $\tilde{c}_{i,j}$ is not necessarily connected since our surfaces $X_{i,j}$ may have several boundary components. We shall need the following important observation: 
\begin{lemm}\label{l:keylemmasurfaces} Let $i\in\{1,2\}$. Let $q$ be some element in $\tilde{c}_i$.  
If $q$ is not a selfintersection point of $\tilde{c}_i$, then $q\in \tilde{c}_{i,j}$ for exactly one index $j$. Moreover, in a neighborhood of such a point, one has $d[X_{i,j}]=-[\tilde{c}_{i,j}]$ in the sense of De Rham currents.

Otherwise, there exists $j\geqslant 0$ such that $q\in \tilde{c}_{i,j}\cap \tilde{c}_{i,j+1}$ and $q\notin \tilde{c}_{i,j'}$ if $j'\notin\{j,j+1\}$.
\end{lemm}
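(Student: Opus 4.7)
The plan is to argue locally around $q$ using coordinates adapted to the curve $\tilde{c}_i$ and exploiting the definition of $f_i$ as a signed count of transverse crossings. By Proposition~\ref{p:findingrightcurves}, selfintersections of $\tilde{c}_i$ are simple (only double points), and since $\tilde{c}_i$ is a $\mathcal{C}^\infty$-small perturbation of a geodesic, such a double point is automatically a transverse crossing of two distinct smooth branches. This reduces the statement to two model cases.

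For a smooth point $q$ of $\tilde{c}_i$, choose a small disk $U\ni q$ so that $\tilde{c}_i\cap U$ is a single embedded arc separating $U$ into $U^+$ and $U^-$. From the definition of $f_i$, the value of $f_i$ across $\tilde{c}_i$ jumps by exactly $\pm 1$, the sign being fixed by the orientation of $\tilde{c}_i$ (the left side carries the higher value). Hence $f_i|_{U^+}=j$ and $f_i|_{U^-}=j-1$ for some $j\geqslant 1$, which forces $q\in \tilde{c}_{i,k}$ if and only if $k=j$. The current identity follows from Stokes as in Example~1 of \S\ref{r:ex-orientation}: in a local chart where $\tilde{c}_i\cap U=\{y=0\}$ oriented by $dx$ and $X_{i,j}\cap U=\{y\geqslant 0\}$ oriented by $dx\wedge dy$, one computes $d[X_{i,j}]=d(\mathbf{1}_{y\geqslant 0})=\delta_0(y)\,dy=-[\tilde{c}_{i,j}]$ near $q$.

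For a double point $q$, pick local coordinates in which the two branches are $\{x=0\}$ and $\{y=0\}$. Applying the smooth-point analysis to each branch, the value of $f_i$ in each of the four open quadrants is determined by the signed count of branch-crossings relative to a base quadrant. The key constraint is that a small loop around $q$ is null-homologous in $X$ and $[\tilde{c}_i]$ is exact, so the algebraic intersection of such a loop with $\tilde{c}_i$ vanishes; equivalently, going once around $q$ the total signed jumps of $f_i$ must cancel. This rules out configurations such as $(v_0,v_0+1,v_0-1,v_0)$ (going in opposite rotational senses) and forces the pattern $(v_0,v_0+1,v_0+1,v_0+2)$ for some $v_0\geqslant 0$. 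Consequently, $q\in \partial X_{i,k}$ exactly when $k\in\{v_0+1,v_0+2\}$, and setting $j=v_0+1$ proves the last claim.

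The main subtlety is the verification that the four values at a double point indeed form the arithmetic pattern $v_0,v_0+1,v_0+1,v_0+2$ rather than some exotic configuration; this is where one must use both the transversality of the two branches and the fact that $[\tilde{c}_i]$ is exact (equivalently, $\mathbf{c}_i$ is homologically trivial), so that $f_i$ is well-defined and loop-independent. The remaining work is bookkeeping of signs in the Stokes identity, which is handled by the model computation of \S\ref{r:ex-orientation}.
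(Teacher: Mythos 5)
Your proof is correct and follows essentially the same route as the paper's: a local two-case analysis (smooth point versus double point), with the current identity checked in the toy model of \S\ref{r:ex-orientation} and the double-point values pinned down by the $\pm1$ jump rule for the constructible function, using that $f_i$ is well defined because $[\tilde{c}_i]$ is exact. One small remark: the correct \emph{cyclic} arrangement of the four quadrant values is $v_0,v_0+1,v_0+2,v_0+1$ (adjacent quadrants must differ by exactly $1$, so the extreme values sit in opposite quadrants), but this slip is harmless since your conclusion about which $\tilde{c}_{i,j'}$ contain $q$ depends only on the multiset $\{v_0,v_0+1,v_0+1,v_0+2\}$.
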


\begin{proof} We begin with the case where $q$ is not a selfintersection point.
Consider some small open neighborhood $\Omega$ of $q$ (diffeomorphic to some open ball).
The intersection $\Omega\cap\tilde{c}_i$ is just some open connected interval containing $q$ and having no self--intersection point. Let us consider the restriction of $f_i$ on $\Omega$. The open subset $\Omega\setminus \tilde{c}_i$ is divided into two connected components $\Omega\setminus \tilde{c}_i=\tilde{\Omega}_{j-1}\cup\tilde{\Omega}_{j}$ where $f_i|_{\tilde{\Omega}_{j-1}}=j-1,\ f|_{\tilde{\Omega}_{j}}=j$ and $j\geqslant 1$. We note that $f_i$ takes different values since 
we can choose to cross $\tilde{c}_i$ exactly one time to go from one component $\tilde{\Omega}_{j-1}$ to the other component $\tilde{\Omega}_{j}$. By construction of the surfaces $X_{i,0},\ldots,X_{i,\sup(f)}$, one has 
$$\tilde{\Omega}_{j-1}\subset X_{i,j-1}\subset\ldots\subset X_{i,0},\quad \tilde{\Omega}_{j-1}\cap X_{i,j}=\emptyset.$$
On the other hand, $\tilde{\Omega}_{j}\subset X_{i,j}\subset\dots\subset X_{i,0}$. This implies that $\Omega\cap\tilde{c}_i$ is a subset of a smooth part of $\tilde{c}_{i,j}$. Moreover, $\Omega\cap X_{i,j}=\tilde{\Omega}_{j}$ and one has $d[X_{i,j}]=-[\tilde{c}_{i,j}]$ near this point where the boundary of $X_{i,j}$ is smooth. To see this, it is sufficient to check the formula in the following toy model (which is equivalent to ours in a local chart $(\tilde{q}_1,\tilde{q}_2)$):
$$\tilde{\Omega}_{j}:=\{(\tilde{q}_1,\tilde{q}_2):\tilde{q}_2>0\}\quad\text{and}\quad \tilde{\Omega}_{j-1}:=\{(\tilde{q}_1,\tilde{q}_2):\tilde{q}_2<0\},$$
where $\tilde{c}_{i,j}:=\{(\tilde{q}_1,0)\}$ is oriented by $d\tilde{q}_1$, i.e. $[\tilde{c}_{i,j}]=-\delta_0(\tilde{q}_2)d\tilde{q}_2$. In fact, taking $[\gamma]=\delta_{0}(\tilde{q}_1)d\tilde{q}_1$ (which is oriented by $d\tilde{q}_2$), one finds that the value in the upper half-plane is larger than the value in the lower half plane. Hence, by a direct calculation, one finds that $d[X_{i,j}]=d\mathbf{1}_{\IR_+}(\tilde{q}_2)=\delta_0(\tilde{q}_2)d\tilde{q}_2=-[\tilde{c}_{i,j}]$.

Suppose now that $q$ is a selfintersection point of the curve $\tilde{c}_i$. In that case, the function $f_i$ takes exactly three values on the four connected components of $\Omega\setminus \tilde{c}_i$. By construction of the function $f_i$, these three values are given by $j-1$ (one time), $j$ (two times) and $j+1$ (one time) for some $j\geqslant 1$:
 \begin{center}
  \includegraphics[scale=0.2]{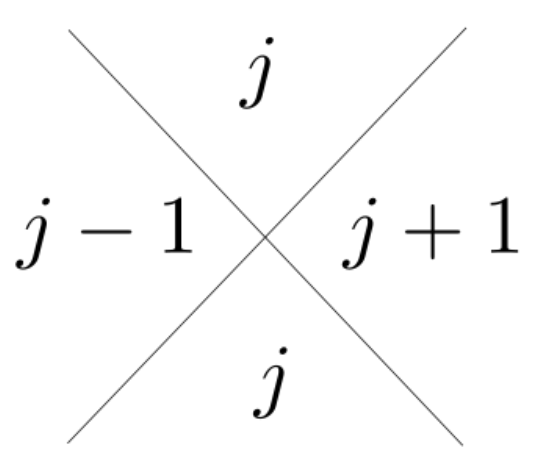}
\end{center}
and one can verify that $q\in \tilde{c}_{i,j}\cap \tilde{c}_{i,j+1}$.
\end{proof}

By construction, we obtain the expected decomposition of the curve $\tilde{c}_i$:
\begin{prop} Let $i\in\{1,2\}$.
We have the following decomposition of the current $[\tilde{c}_i]$:
$$[\tilde{c}_i]=\sum_{j=1}^\infty [\tilde{c}_{i,j}]$$
where each $\tilde{c}_{i,j}=\partial X_{i,j}$ is a finite union of closed, simple and piecewise smooth curves with $X_{i,j}:=\overline{\{ f_i\geqslant j\}}$. Moreover, for every $1\leqslant j\leqslant N_i:=\sup f_i$, one has
$$d\mathbf{1}_{X_{i,j}}=d[X_{i,j}]=-[\tilde{c}_{i,j}],$$
in the sense of De Rham currents.
\end{prop}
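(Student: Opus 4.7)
The plan is to reduce both displayed identities to local computations, using the local combinatorics already established in Lemma~\ref{l:keylemmasurfaces}. Since $f_i$ is a constructible function taking finitely many values, $N_i := \sup f_i$ is finite and the layer-cake representation $f_i = \sum_{j=1}^{N_i}\mathbf{1}_{X_{i,j}}$ holds pointwise on $X\setminus\tilde{c}_i$, hence almost everywhere on $X$. As an equality of locally integrable functions on $X$, it promotes to an equality of currents of degree $0$.

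Next, I would prove the boundary identity $d\mathbf{1}_{X_{i,j}} = -[\tilde{c}_{i,j}]$ for each $1\leqslant j\leqslant N_i$. Away from the selfintersection points of $\tilde{c}_i$, the boundary $\tilde{c}_{i,j}$ is smooth and the identity is exactly the toy-model computation carried out at the end of the proof of Lemma~\ref{l:keylemmasurfaces} (Stokes formula across a smooth arc, with orientation induced by the increase of $f_i$). It remains to check the identity at the finite set of selfintersection points. Using the local chart $(\tilde{q}_1,\tilde{q}_2)$ centered at such a point in which $\tilde{c}_i$ coincides with $\{\tilde{q}_1\tilde{q}_2=0\}$, the four quadrants carry the values $j-1,j,j,j+1$ by Lemma~\ref{l:keylemmasurfaces}, so $X_{i,j}$ locally coincides with the L-shape formed by the three quadrants with value $\geqslant j$ and $\mathbf{1}_{X_{i,j}}$ can be written explicitly as a sum of Heaviside products. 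Differentiating directly yields the integration current on the two half-axes bounding this L-shape, with the correct orientation; this matches $-[\tilde{c}_{i,j}]$ since $\tilde{c}_{i,j}$ has a corner at the origin but no selfcrossing. Alternatively, since $d\mathbf{1}_{X_{i,j}}+[\tilde{c}_{i,j}]$ is a degree-one current already known to vanish outside a finite set and has order at most one, one may invoke a regularity argument to conclude that it vanishes identically.

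Having established the boundary identity, the decomposition $[\tilde{c}_i]=\sum_{j=1}^{N_i}[\tilde{c}_{i,j}]$ follows by differentiating the layer-cake identity:
\[
-[\tilde{c}_i] \;=\; df_i \;=\; \sum_{j=1}^{N_i} d\mathbf{1}_{X_{i,j}} \;=\; -\sum_{j=1}^{N_i}[\tilde{c}_{i,j}],
\]
where the identity $df_i=-[\tilde{c}_i]$ itself comes from Definition~\ref{d:fconstruct}: at any smooth point of $\tilde{c}_i$, $f_i$ jumps by $\pm 1$ across the curve, and the same local Stokes computation applies.

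The fact that each $\tilde{c}_{i,j}$ is a finite union of closed simple piecewise smooth curves follows from Proposition~\ref{p:findingrightcurves}: the only singularities of $\partial X_{i,j}$ arise at the simple double points of $\tilde{c}_i$, where the local picture above shows that $\tilde{c}_{i,j}$ acquires a corner but does not cross itself. I expect the main obstacle to be precisely this bookkeeping at the selfintersection points: one must carefully track orientations in the local quadrant model and verify that the L-shape combinatorics is consistent with the ordering of the level sets $X_{i,j}$ -- a point where Proposition~\ref{p:findingrightcurves} is crucially used to rule out triple intersections or tangential selfcontacts.
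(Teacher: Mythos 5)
Your argument is correct and follows essentially the same route as the paper: the first identity is obtained by differentiating the layer-cake formula $f_i=\sum_j\mathbf{1}_{X_{i,j}}$, and the boundary identity is checked via the smooth-arc computation of Lemma~\ref{l:keylemmasurfaces} away from the double points and via the explicit quadrant/L-shape computation with Heaviside products at a double point, which is precisely the paper's use of the relation $d\mathbf{1}_{\mathbb{R}_+^2}=\mathbf{1}_{\mathbb{R}_+}(\tilde{q}_2)\delta_0(\tilde{q}_1)d\tilde{q}_1+\mathbf{1}_{\mathbb{R}_+}(\tilde{q}_1)\delta_0(\tilde{q}_2)d\tilde{q}_2$. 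One caveat: your alternative ``regularity argument'' does not work as stated, because a closed degree-one current of order one supported at a point need not vanish (e.g.\ $d\delta_0$); one would need the stronger observation that $d\mathbf{1}_{X_{i,j}}$ is of order zero (a measure, since $X_{i,j}$ has piecewise smooth boundary and hence finite perimeter), so the direct computation should be kept as the actual proof.
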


Note that using the dual operator $\partial T=-(-1)^{\text{deg}(T)} dT$ on $\mathcal{D}^{\prime}$, this would read equivalently $\partial [X_{i,j}]=[\tilde{c}_{i,j}]$. 
As a consequence , the orientation induced by $X_{i,j}$ on its boundary $\tilde{c}_{i,j}$ is the same as the orientation induced by $\tilde{c}_i$ and each $\tilde{c}_{i,j}$ is cohomologically trivial. Hence, $\tilde{c}_i$ is in this sense the (oriented) boundary of the system of surfaces $X_i=(X_{i,1},\ldots, X_{i,N_i})$. More precisely, in the sense of De Rham currents, one has
\begin{equation}\label{e:decomposition-curve}[\tilde{c}_i]=-\sum_{j=1}^{N_i}d[X_{i,j}]=-df_i.\end{equation}
\begin{proof}
 The first part is a direct consequence of our construction and of Lemma~\ref{l:keylemmasurfaces}. For the second part, it is a consequence of Stokes' Theorem for manifolds with corners~\cite[Th.~16.25]{Lee13}.
\end{proof}

\subsection{Euler characteristics}

\subsubsection{Classical definition of the Euler characteristic}
Let us recall the definition of the Euler characteristic of a CW-complex~\cite[App.~A]{Hat}. Let $\mathbf{X}$ be a space which can be written as a disjoint union of open cells, i.e. $\mathbf{X}=\bigsqcup_{j\in J}\mathbf{X}_j$, each cell being homeomorphic to some $\mathbb{R}^{\text{dim}\mathbf{X}_j}$. Then, the Euler characteristic of $\mathbf{X}$ is given by~\cite[p.~3]{CGR}
$$\chi(\mathbf{X})=\sum_{j\in J}(-1)^{\text{dim}\mathbf{X}_j},$$
which extends the classical formula for polyhedra. In particular, any continuous closed curve (without selfintersection points) on our closed surface $X$ has Euler characteristic equal to $0$. Similarly, any closed domain $X_1\subset X$ with piecewise smooth boundary $\partial X_1$ can be triangulated and it can be decomposed as above. As we are in dimension~$2$, one has
\begin{equation}\label{e:euler-closure}\chi(X_1)=\chi(X_1\setminus \partial X_1)+\chi(\partial X_1)=\chi(X_1\setminus \partial X_1)\quad\text{and}\quad \chi(X\setminus X_1)+\chi(X_1)=\chi(X).
\end{equation}

\subsubsection{Euler characteristics of surfaces and constructible functions}\label{sss:constructible}

As we have just seen, it is equivalent to think of the constructible functions $f_i$ associated with $\tilde{c}_i$ with $i\in\{1,2\}$ as the system of surfaces $X_i=\left(X_{i,1},\dots,X_{i,N_i}\right)$, $X_{i,j}=\{f_i\geqslant j\}$. Note that this system of surfaces with piecewise smooth boundary generates an abstract CW-complex that we denote by $X(\tilde{c}_i)$ and whose ``(oriented) boundary'' is given by $\tilde{c}_i$. This was already expressed more precisely in terms of De Rham currents by equality~\eqref{e:decomposition-curve}. In the case where the initial curve has no selfintersection points, one has $X(\tilde{c}_i)=X(c_i):=X_{i,1}$ which is a surface with smooth boundary. For more general geodesic curves $c_i$, our main formula on the value at $0$ of Poincar\'e series can be extended if we introduce these curves $\tilde{c}_i$ and if we define the appropriate notion of Euler characteristic for the system of surfaces $X_i=\left(X_{i,1},\dots,X_{i,N_i}\right)$ (or equivalently for the CW-complex~$X(\tilde{c}_i)$).

Thus we would like to assign a natural notion of Euler characteristic to the constructible function $f_i$ or equivalently to the system of surfaces $X_i=\left(X_{i,1},\dots,X_{i,N_i}\right)$. Our definition follows the presentation of Euler integration due to Viro~\cite{Viro} and Schapira~\cite{Scha1, Scha2, Scha3}:
\begin{def1}\label{d:Euler}[Euler characteristic of constructible functions]
We define the Euler characteristic of $f_i$ as
\begin{equation}
\chi(f_i):=\sum_{j=0}^{N_i}j \chi\left(\{f_i= j\} \right)=\sum_{j=1}^{N_i} \chi\left(\{f_i\geqslant j\} \right)=\sum_{j=1}^{N_i} \chi(X_{i,j}).
\end{equation}
\end{def1}
Note that the advantage of the second formulation for $\chi(f_i)$ is that the excursion sets $\{f_i\geqslant j\}$ are compact whereas $\{f_i=j\}$ is only relatively compact~\cite[Prop.~4.1]{BG}.  
\begin{rema}
 We can relate this definition with the classical one for CW-complex as follows: $\chi(f_i)=\chi(X(\tilde{c}_i))$. 
\end{rema}
We emphasize that the Euler integral is in fact defined for much more general bounded and constructible functions, $f:X\rightarrow\mathbb{Z}$ whose level sets are tame sets~\cite[\S 4]{CGR}. In that context, one can define 
$$\chi(f):=\int_Xfd\chi=\sum_{j=-\infty}^{+\infty}j\chi(f=j).$$
For instance, we can define the Euler characteristic $\chi(f_1f_2)$ of the product $f_1f_2$ as: 
\begin{multline}\label{e:Euler-product}
\chi(f_1f_2)=\int_Xf_1f_2d\chi=\sum_{1\leqslant j_1\leqslant N_1
}\sum_{1\leqslant j_2\leqslant N_2}\int_X\mathbf{1}_{X_{1,j_1}}\mathbf{1}_{X_{2,j_2}}d\chi\\
=\sum_{1\leqslant j_1\leqslant N_1
}\sum_{1\leqslant j_2\leqslant N_2} \chi(X_{1,j_1}\cap X_{2,j_2}),
\end{multline}
or the Euler characteristic of $\mathbf{1}_{\tilde{c}_1\cap\tilde{c}_2}$ as
$$\chi\left(\mathbf{1}_{\tilde{c}_1\cap\tilde{c}_2}\right)=\sum_{1\leqslant j_1\leqslant N_1
}\sum_{1\leqslant j_2\leqslant N_2} \chi(\partial X_{1,j_1}\cap \partial X_{2,j_2}).$$

\subsubsection{Statement of the main result}

Before going further, we are now ready to state a microlocal statement expressing $\mathbf{L}(c_1,c_2)$ in terms of Euler characteristics of constructible functions. Combined with Proposition~\ref{p:value-at-0}, this yields an extension of Theorem~\ref{t:zero} to any pair of homologically trivial geodesic curves:
\begin{theo}\label{t:linking-integer-microlocal}
Suppose that $c_1$ and $c_2$ are closed geodesics which are homologically trivial and let $\tilde{c}_1$ and $\tilde{c}_2$ be the two (small) homotopic deformations given by Proposition~\ref{p:findingrightcurves}.

Then there exists a pair $(f_1,f_2)$ of constructible functions associated with $(\tilde{c}_1,\tilde{c}_2)$ such that
\begin{equation}\label{e:representationlegendrian}
\sum_{j=1}^\infty [N_1^*\left(\{f_i\geqslant j\}\right)]=[\Sigma(\tilde{c}_i)],
\end{equation}
where the equality holds in the sense of De Rham currents.
Moreover, the linking of the Legendrians is given by the formula
\begin{equation}\label{e:main-linking-microlocal}
\mathbf{L}(c_1,c_2)=-\frac{\chi(f_1)\chi(f_2)}{\chi(X)}+\chi(f_1f_2)-\frac{1}{2}\chi\left(\mathbf{1}_{\tilde{c}_1\cap\tilde{c}_2}\right).
\end{equation}
\end{theo}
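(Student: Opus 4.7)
The identity~\eqref{e:representationlegendrian} is an immediate consequence of the construction of \S\ref{sss:decomposition} combined with Theorem~\ref{t:constructiblefun}, so the substantive content is the linking identity~\eqref{e:main-linking-microlocal}. The plan is to use~\eqref{e:splitting-current} to reduce the problem to a sum of elementary pairings
$$\mathbf{L}(c_1,c_2)=\sum_{j_1=1}^{N_1}\sum_{j_2=1}^{N_2}\mathbf{L}_{j_1,j_2},\qquad \mathbf{L}_{j_1,j_2}:=\int_{S^*X}[\Sigma_{1,j_1}]\wedge R_{2,j_2},$$
where $R_{2,j_2}$ is a primitive of $[\Sigma_{2,j_2}]$ with wavefront set in $N^*(\Sigma_{2,j_2})$. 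Existence of such a primitive follows from the argument of Proposition~\ref{p:value-at-0} applied to the piecewise smooth homologically trivial curve $\tilde c_{2,j_2}$ bounding the compact surface $X_{2,j_2}$; in passing this establishes that $[\Sigma(c_2)]$ itself is exact, which is the first assertion of Theorem~\ref{t:linking-integer}.

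For each pair $(j_1,j_2)$ I would then adapt the Morse-theoretic strategy of the warm-up examples~\S\ref{ss:warmup}. A preparatory smoothing step (carried out in \S\ref{sss:smoothpiece}--\S\ref{sss:reformulation}) rounds the corners of the curves $\tilde c_{i,j}$ above the self-intersection points of $\tilde c_i$; this is legitimate since Proposition~\ref{p:findingrightcurves} guarantees that these are only simple double points and are disjoint from $\tilde c_1\cap\tilde c_2$, so none of the relevant topological invariants is affected. One then fixes a smooth Morse function $f$ on $X$ which is constant on $\tilde c_{1,j_1}$, has no critical point on $\tilde c_{1,j_1}\cup(\tilde c_1\cap\tilde c_2)$, and whose outward gradient along $\tilde c_{1,j_1}$ matches the coorientation of $\Sigma_{1,j_1}$. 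Applying Lemma~\ref{l:boundary} to the section $S=\{(q,d_qf/\|d_qf\|):q\in X_{1,j_1}\setminus\mathrm{Crit}(f)\}$ and pairing with $R_{2,j_2}$ gives
$$\mathbf{L}_{j_1,j_2}=\int_{S^*X}[S]\wedge[\Sigma_{2,j_2}]-\sum_{a\in\mathrm{Crit}(f)\cap X_{1,j_1}}(-1)^{\mathrm{ind}(a)}\int_{S^*X}R_a\wedge[\Sigma_{2,j_2}],$$
where $R_a$ is any primitive of $[S_a^*X]$. The sum over critical points is evaluated using the already-established trivial-class warm-up of~\S\ref{sss:warmup} applied to each $a$ against $\tilde c_{2,j_2}$, and collapses via the Poincar\'e--Hopf identity with boundary~\cite{Mor29} to $-\chi(X_{1,j_1})\chi(X_{2,j_2})/\chi(X)+\chi(X_{1,j_1}\cap X_{2,j_2})$.

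The main obstacle is the remaining term $\int[S]\wedge[\Sigma_{2,j_2}]$, which carries the half-intersection contribution. Each point $q\in\tilde c_{1,j_1}\cap\tilde c_{2,j_2}$ lies simultaneously on the boundaries of $X_{1,j_1}$ and of $X_{2,j_2}$, and the Morse computation built from $\tilde c_1$ alone counts it asymmetrically. Symmetrising---either by averaging with the parallel calculation obtained by interchanging the roles of $\tilde c_1$ and $\tilde c_2$, or by choosing $f$ locally symmetric across $\tilde c_{1,j_1}$ at every such $q$---a careful local model computation at each intersection point produces the factor $\tfrac12$ and yields
$$\mathbf{L}_{j_1,j_2}=-\frac{\chi(X_{1,j_1})\chi(X_{2,j_2})}{\chi(X)}+\chi(X_{1,j_1}\cap X_{2,j_2})-\tfrac12\,\chi\bigl(\mathbf{1}_{\tilde c_{1,j_1}\cap\tilde c_{2,j_2}}\bigr).$$
Summing over $(j_1,j_2)$ and invoking Definition~\ref{d:Euler}, the multiplicative formula~\eqref{e:Euler-product}, and the obvious additivity $\chi(\mathbf{1}_{\tilde c_1\cap\tilde c_2})=\sum_{j_1,j_2}\chi(\mathbf{1}_{\tilde c_{1,j_1}\cap\tilde c_{2,j_2}})$, one recovers~\eqref{e:main-linking-microlocal}.
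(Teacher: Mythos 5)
Your overall architecture matches the paper's: decompose via the constructible functions into elementary pieces $\Sigma_{i,j}$, smooth the corners, realize each $[\Sigma_{1,j_1}]$ as $d[S]$ plus fibers over critical points of a gradient-type field via Lemma~\ref{l:boundary}, and evaluate with Poincar\'e--Hopf. However, there is a genuine gap precisely at the step you yourself flag as ``the main obstacle'': the origin of the term $-\tfrac12\chi(\mathbf{1}_{\tilde c_1\cap\tilde c_2})$ is asserted (``symmetrising \dots a careful local model computation \dots produces the factor $\tfrac12$'') rather than proved. Neither of your two proposed fixes works as stated: averaging with the computation in which $\tilde c_1$ and $\tilde c_2$ are interchanged presupposes that the two answers agree, which is what you are trying to prove; and ``choosing $f$ locally symmetric across $\tilde c_{1,j_1}$'' is vacuous since $f$ is constrained to be constant on $\tilde c_{1,j_1}$, not on $\tilde c_{2,j_2}$. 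The actual mechanism is the following. After perturbing $Y=\nabla f$ so that the section $S$ meets $\Sigma_{2,j_2}$ transversally at finitely many points, the integral $\int_{S^*X}[S]\wedge[\Sigma_{2,j_2}]$ counts with signs the zeros of the \emph{tangential} component $\tilde Y$ of $Y$ along the $1$-manifold $X_{1,j_1}\cap\partial X_{2,j_2}$ at which $Y$ points \emph{into} $X_{2,j_2}$. Poincar\'e--Hopf for $1$-manifolds with boundary, applied to these arcs (whose boundary is exactly $\partial X_{1,j_1}\cap\partial X_{2,j_2}$ and where $\tilde Y$ points inward because $Y$ is normal to $\partial X_{1,j_1}$), produces the factor $\tfrac12$: each arc has Euler characteristic $1$ and two endpoints. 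The complementary ``outward'' zeros are then traded against $\chi(X_{1,j_1}\cap X_{2,j_2})$ by a second application of Poincar\'e--Hopf, this time to the two-dimensional region $X_{1,j_1}\cap X_{2,j_2}$ (after smoothing its corners).

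This also corrects a related misstep in your bookkeeping: the sum $\sum_{a\in\mathrm{Crit}(f)\cap X_{1,j_1}}(-1)^{\mathrm{ind}(a)}\int R_a\wedge[\Sigma_{2,j_2}]$ does \emph{not} collapse by itself to $-\chi(X_{1,j_1})\chi(X_{2,j_2})/\chi(X)+\chi(X_{1,j_1}\cap X_{2,j_2})$. Using the trivial-class computation it equals $-\chi(X_{1,j_1})\chi(X_{2,j_2})/\chi(X)+\sum_{a\in\mathrm{Crit}(f)\cap X_{1,j_1}\cap X_{2,j_2}}(-1)^{\mathrm{ind}(a)}$, and the last sum is not $\chi(X_{1,j_1}\cap X_{2,j_2})$ because $Y$ is inward-pointing only along the $\partial X_{1,j_1}$-part of $\partial(X_{1,j_1}\cap X_{2,j_2})$; the defect is exactly the outward-zero count that must be combined with $\int[S]\wedge[\Sigma_{2,j_2}]$. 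A secondary, more minor issue: exactness of $[\Sigma_{2,j_2}]$ in $S^*X$ does not follow from ``the argument of Proposition~\ref{p:value-at-0}'' (which assumes exactness and only upgrades the regularity of a primitive), nor from $\tilde c_{2,j_2}$ bounding a surface in $X$ (since $H_1(S^*X,\IZ)$ has torsion); it is obtained from the identity $d[S]=[\Sigma_{2,j_2,m}]-\sum_a(-1)^{\mathrm{ind}(a)}[S_a^*X]$ together with the exactness of the fibers, followed by a limiting argument in $\mathcal{D}^{\prime}_{\Gamma_2}$.
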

We note that, in the case where one of the 
$c_i$ is a point then our perturbed curves are chosen so that $\tilde{c}_1\cap\tilde{c}_2=\emptyset$.
In the terminology of symplectic topology (see \S\ref{ss:microlocal-symplectic} below), we say that the constructible function $f_i$ is quantizing the Legendrian knot $\Sigma(\tilde{c}_i)$, where $\Sigma(\tilde{c}_1)$ (resp. $\Sigma(\tilde{c}_2)$) is the Legendrian knot $\Sigma_1^{T_0}$ (resp. $\Sigma_2^{-T_0'}$) with the conventions of paragraph~\ref{sss:tangent-space}. The rest of this Section is devoted to the proof of this Theorem

\subsection{Lifting everything to $S^*X$}\label{sss:lift}

We would now like to turn the decomposition of the curve $\tilde{c}_i$ into a proper decomposition of $\Sigma(\tilde{c}_i)$. It is convenient to introduce the (unit) conormal bundle of $X_{i,j}$ -- see~\cite[Def. 2.4.1 p.~442]{Al} for the case of more general polyhedra. Recall that, for every vector $v\in T_xX$, we defined in Section~\ref{a:geometry} the covector $v^\flat\in T_x^*X$ as the image of $v$ by the isomorphism induced by the metric $g$ on $X$. In order to define the unit conormal bundle above $X_{i,j}$, we have three kind of points to distinguish:
\begin{itemize}
 \item The points in the interior of $X_{i,j}$. Above such points, the (unit) conormal bundle is obviously empty.
 \item The regular points of $\tilde{c}_{i,j}$. Here, we take the same convention as for $\Sigma(\tilde{c}_i)$, i.e. the points in the unit conormal bundle above some regular point $\tilde{c}_{i,j}(t_0)$ are given by the point
 $$\left(\tilde{c}_{i,j}(t_0),(\tilde{c}_{i,j}'(t_0)^\flat)^\perp\right),$$
 where $\tilde{c}_{i,j}(t)$ is parametrized by arc length.
 \item The singular points of $\tilde{c}_{i,j}$. Again, we take an arc-length (away from the singularities) parametrization $t\mapsto\tilde{c}_{i,j}(t)$  of the curve $\tilde{c}_{i,j}$. Above such a point $\tilde{c}_{i,j}(t_0)$, the derivative $\tilde{c}_{i,j}'(t_0)$ is not well defined. Yet, we have the existence of the two following limits:
 $$\tilde{c}_{i,j}'(t_0+)=\lim_{\tau\rightarrow 0,\tau>0}\tilde{c}_{i,j}'(t_0+\tau)\quad\text{and}\quad\tilde{c}_{i,j}'(t_0-)=\lim_{\tau\rightarrow 0,\tau>0}\tilde{c}_{i,j}'(t_0-\tau).$$
 Then, the conormal bundle above such a point is defined as the \emph{connected} set of unit covectors lying in $S_{\tilde{c}_{i,j}(t_0)}^*X$ and in the cone of cotangent vectors between $(\tilde{c}_{i,j}'(t_0-)^\flat)^\perp$ and $(\tilde{c}_{i,j}'(t_0+)^\flat)^\perp$ intersecting the covectors pointing inward $X_{i,j}$. Here, a covector $p$ is pointing inward $X_{i,j}$ if, for any curve $\gamma$ passing through $\tilde{c}_{i,j}(t_0)$ and cotangent to $p$ at $t=0$, one has $\gamma(t)\in X_{i,j}$ for every $t\geqslant 0$ small enough. See figure~\ref{f:corners}.
\begin{figure}[ht]
\includegraphics[scale=0.34]{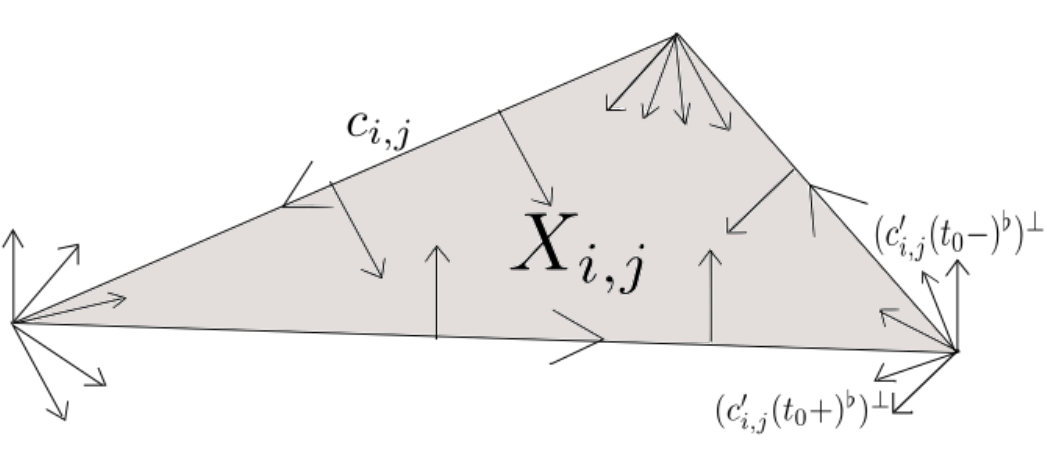}
\centering
\caption{\label{f:corners}Adding covectors at the singular points.}
\end{figure}
\end{itemize}

The union of all these covectors will be referred to as the (unit) conormal bundle to $X_{i,j}$ and we will denote it by $N_1^*(X_{i,j})$. This defines a closed, piecewise smooth and embedded curve in $S^*X$. Even if the curve $c_{i,j}$ is only piecewise $\mathcal{C}^1$, we emphasize that the resulting curve $N_1^*(X_{i,j})$ in $S^*X$ is $\mathcal{C}^1$ (but not $\ml{C}^2$ a priori). This can for instance be verified from the formulas in local coordinates given in Appendix~\ref{ss:proof-smoothing}. The orientation of the curve $N_1^*(X_{i,j})$ is naturally induced from the orientation of $\tilde{c}_i$. In particular, we can define the integration current $[N_1^*(X_{i,j})]$ along this curve and one has $d[N_1^*(X_{i,j})]=0$. We can also note that we still 
have a Legendrian curve, i.e. $[N_1^*(X_{i,j})]\wedge\alpha =0$, where $\alpha$ is the Liouville one-form. 

\begin{rema}
 We remark that, in this construction, we implicitely supposed that $\tilde{c}_i$ was not reduced to a point (i.e. $T_0,T_0'\neq 0$ if $\mathbf{c}_i$ is trivial in $\pi_1(X)$). In the case of a point, the conormal bundle of a point and its orientation were already defined in~\S\ref{ss:transversesubm}.
\end{rema}

Finally, we observe that as soon as one curve $\tilde{c}_{i,j}$ has singular points, the union $\cup_{j=1}^{N_i}N_1^*(X_{i,j})$ is larger than the set $\Sigma(\tilde{c}_i)$ (as it contains more cotangent vectors above each selfintersection point of $\tilde{c}_i$). Yet, in terms of currents, we can verify that the following holds:

\begin{lemm}\label{t:constructiblefun} With the above conventions, one has, in the sense of De Rham currents,
\begin{equation}
[\Sigma_1^{T_0}]=\sum_{j=1}^{N_1} [\Sigma_{1,j}]\quad\text{and}\quad[\Sigma_2^{-T_0'}]=\sum_{j=1}^{N_2} [\Sigma_{2,j}]
\end{equation}
where $[\Sigma_{i,j}]=[N_1^*X_{i,j}]$. 
\end{lemm}

Note that the currents $\Sigma_{i,j}$ depend implicitely on $T_0$ and $T_0'$ but we dropped this dependence to simplify notations. As a corollary of this result, we will be able to compute the linking between $\Sigma_1$ and $\Sigma_2$ in terms of the linking numbers between each elementary piece $\Sigma_{1,j}$ and $\Sigma_{2,j'}$ which are simple closed curves which is a case already treated. See next paragraph for more details.

\begin{proof} We only show $[\Sigma_1^{T_0}]=\sum_{j=1}^{N_1} [\Sigma_{1,j}]$. The other case is similar. Recall first that $[\Sigma_1^{T_0}]$ and $([N^*_1(X_{1,j})])_{1\leqslant j\leqslant N_1}$ are currents of integration over piecewise smooth, simple, closed curves in $S^*X$. For every $1\leqslant j\leqslant N_1$, the oriented curve $N^*_1(X_{1,j})$ coincides with $\Sigma_1^{T_0}$ away from the singularities of $\tilde{c}_{1,j}$. In particular, thanks to Lemma~\ref{l:keylemmasurfaces}, the expected equality holds away from these singularities. Hence, we only need to understand what happens in a neighborhood of such a singularity $q$. Thanks to Lemma~\ref{l:keylemmasurfaces}, the point $q$ belongs to exactly two curves $\tilde{c}_{1,j}$ and $\tilde{c}_{1,j+1}$ for some $j\geqslant 1$. Then, we explicitely see in Figure~\ref{f:conormal} that above the singular point $q$, the
contributions of $N^*_1(X_{1,j})$ and $N^*_1(X_{1,j+1})$ compensate each other, which concludes.
\begin{figure}[ht]
\includegraphics[scale=0.34]{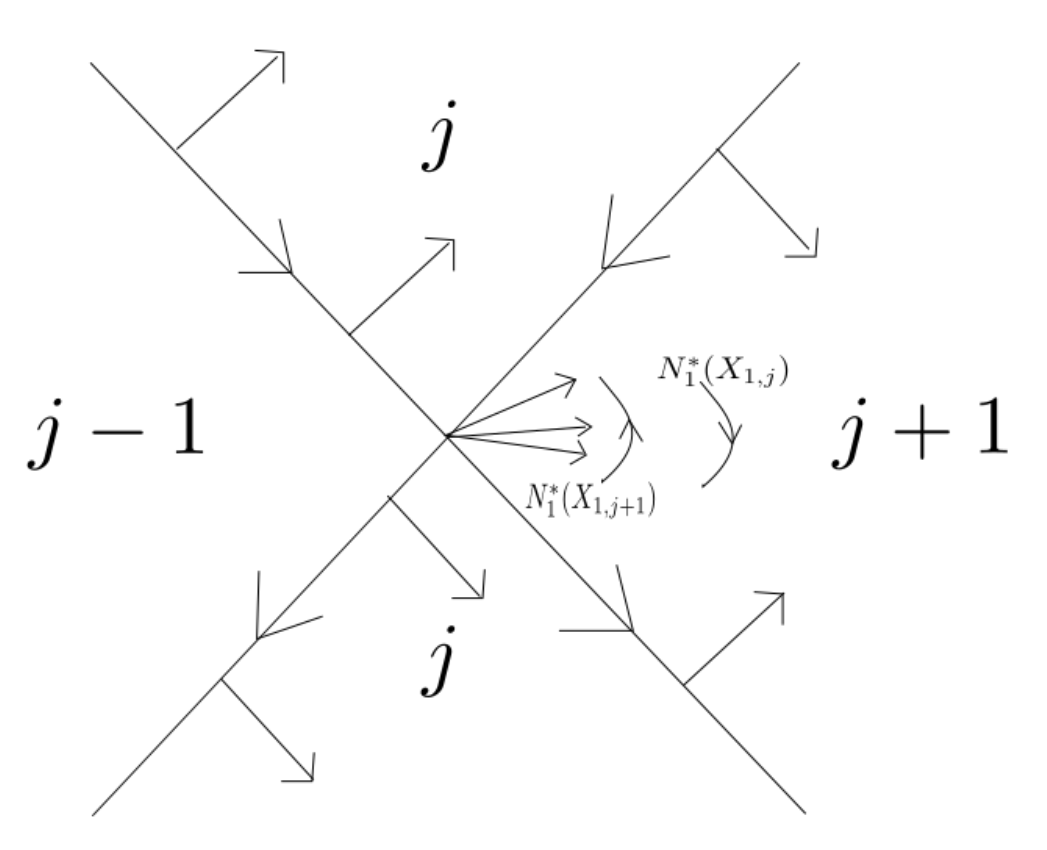}
\centering
\caption{\label{f:conormal}Contributions of $[N^*_1(X_{1,j})]$ and $[ N^*_1(X_{1,j+1})]$ at the singular points.}
\end{figure} 
\end{proof}

\subsection{Consequence of the decomposition for the linking numbers}\label{sss:decomposition-link}

Let us summarize the situation so far and fix some notations for the sequel. We started from our two geodesics $c_1$ and $c_2$ and we applied the geodesic flow to their Legendrian lifts $\Sigma(c_1)$ and $\Sigma(c_2)$. This gives rise to two curves $\tilde{c}_1$ and $\tilde{c}_2$ that are homotopic to $c_1$ and $c_2$ and to a new pair of Legendrian knots $\Sigma_1^{T_0}$ and $\Sigma_2^{-T_0'}$.
Then, we decomposed the curves $\tilde{c}_1$ and $\tilde{c}_2$ as a union of embedded, closed curves which are only piecewise smooth. In terms of De Rham currents, it reads
$$\forall i\in\{1,2\},\quad[\tilde{c}_i]=\sum_{j=1}^{N_i} [\tilde{c}_{i,j}],$$
where each $\tilde{c}_{i,j}$ is the oriented boundary of some surface $X_{i,j}$ with piecewise smooth boundary. In terms of currents, we have $[\tilde{c}_{i,j}]=-d[X_{i,j}]=\partial[X_{i,j}]$. Then, we defined the (unit) conormal bundle $N_1^*(X_{i,j})$ to each surface $X_{i,j}$. This conormal bundle is in fact a Legendrian knot in $S^*X$ (again piecewise smooth) and we denote it by $\Sigma_{i,j}$. This yields the following decompositions of our initial Legendrian knots:
\begin{equation}\label{e:splitting-current}
 [\Sigma_1^{ T_0}]=\sum_{j=1}^{N_1}[\Sigma_{1,j}]\quad \text{and}\quad [\Sigma_2^{- T_0'}]=\sum_{j=1}^{N_2}[\Sigma_{2,j}].
\end{equation}
We can rewrite the quantity we are interested in as
$$\mathbf{L}(c_1,c_2)=\sum_{j=1}^{N_1}\int_{S^*X} [\Sigma_{1,j}]\wedge R_{2}^{-T_0'}.$$
Hence, it remains to evaluate the ``linking number'' associated with every elementary piece $\Sigma_{1,j}$ to conclude the proof of Theorem~\ref{t:linking-integer-microlocal}.

\subsection{Smoothing each elementary piece}
\label{sss:smoothpiece}
In view of proving Theorem~\ref{t:linking-integer-microlocal}, we will use the analysis of \S\ref{s:morse} to reduce our problem to simple curves using the bilinearity of the linking number.
However, when we removed the selfintersections of our initial (smooth) curves $\tilde{c}_1$ and $\tilde{c}_2$, we introduced some families of embedded curves that are only piecewise smooth. We would now like to regularize these new curves without affecting the linking number we want to compute. 

First, by construction of $\Sigma_{i,j}=N_1^*(X_{i,j})$ (see \S~\ref{sss:lift}), we have
$$
\bigcup_{j=1}^{N_1}\text{supp}([\Sigma_{1,j}])\subset \Sigma_1^{T_0}\cup \left(\bigcup_{a\in \text{Cross}(\tilde{c}_1)} S_a^*X\right)$$
and
$$\bigcup_{j=1}^{N_2}\text{supp}([\Sigma_{2,j}])\subset \Sigma_2^{-T_0'}\cup \left(\bigcup_{a\in \text{Cross}(\tilde{c}_2)} S_a^*X\right)
$$
since we added some subset of the cotangent fibers over the selfintersection points $\text{Cross}(\tilde{c}_i)$ of $\tilde{c}_i$. 
Still, by construction of $\Sigma_{i,j}=N_1^*(X_{i,j})$ (see \S~\ref{sss:lift}), the following holds: 
\begin{lemm}
If $\tilde{c}_{i,j}$ has $k$ singular points, then $\Sigma_{i,j}$ is itself a piecewise smooth curve with exactly $2k$ singular points which are isolated. Over each singular point $a$ of $\tilde{c}_{i,j}$, there are exactly two singular points of $\Sigma_{i,j}$.
\end{lemm}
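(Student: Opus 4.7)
My plan is to analyze the local structure of $\Sigma_{i,j} = N_1^*(X_{i,j})$ near each singular point of $\tilde{c}_{i,j}$ and verify the piecewise smoothness by a direct inspection, exploiting the fact that every self-intersection of $\tilde{c}_i$ is a transverse double point (Proposition~\ref{p:findingrightcurves}) and that, by Lemma~\ref{l:keylemmasurfaces}, each selfintersection point $a$ of $\tilde{c}_i$ belongs to at most two successive boundary curves $\tilde{c}_{i,j}$ and $\tilde{c}_{i,j+1}$. Away from singular points, one only needs to check that the conormal construction is locally smooth, which is immediate since $\tilde{c}_{i,j}$ is locally a smooth embedded arc and the assignment $q\mapsto (\tilde{c}_{i,j}'(t)^\flat)^\perp$ defines a smooth lift into $S^*X$.

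Fix now a singular point $a$ of $\tilde{c}_{i,j}$; by the above, $a$ is a transverse double point where two smooth branches of $\tilde{c}_i$ meet, and the boundary $\tilde{c}_{i,j}$ is obtained by concatenating one incoming branch with one outgoing branch (the pairing being prescribed by the orientation induced by $X_{i,j}$). Let $v_-,v_+\in T_aX$ be the corresponding left/right unit tangent vectors, which are linearly independent by transversality. Then the portion of $\Sigma_{i,j}$ sitting above $a$ is, by the definition in \S\ref{sss:lift}, the connected arc $\mathcal{A}_a\subset S_a^*X$ running between $(v_-^\flat)^\perp$ and $(v_+^\flat)^\perp$ through the inward-pointing cone of $X_{i,j}$. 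Since $v_-$ and $v_+$ are non-collinear, these two endpoints are \emph{distinct} points of $S_a^*X$, and $\mathcal{A}_a$ is a nondegenerate smooth embedded arc (a piece of the great circle $S_a^*X$).

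The piece $\Sigma_{i,j}$ is therefore obtained by concatenating the smooth lifts over the smooth branches of $\tilde{c}_{i,j}$ with the smooth fiber arcs $\mathcal{A}_a$, one for each singular point $a$. It remains to verify that the two endpoints of each $\mathcal{A}_a$ are genuine corners. At such an endpoint, say $(a,(v_+^\flat)^\perp)$, the tangent direction of the smooth lift coming from the branch of $\tilde{c}_{i,j}$ with tangent $v_+$ projects to $v_+\neq 0$ in $T_aX$ via $d\Pi$, whereas the tangent direction of $\mathcal{A}_a$ is vertical (i.e.\ in $\ml{V}_x^{(1)}$, see~\S\ref{ss:horizontal-vertical}) and projects to $0$ under $d\Pi$. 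Hence the two tangent lines at the junction are distinct, so the junction is a non-smooth corner. This shows that each singular point $a$ of $\tilde{c}_{i,j}$ gives rise to exactly two singular points of $\Sigma_{i,j}$, namely the two endpoints of $\mathcal{A}_a$.

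Isolation is then automatic: the singular points of $\tilde{c}_{i,j}$ are finitely many (they form a subset of the selfintersections of $\tilde{c}_i$), each contributes two isolated corners of $\Sigma_{i,j}$, and between any two such corners $\Sigma_{i,j}$ is a smooth embedded curve by the two local descriptions above. Summing over the $k$ singular points of $\tilde{c}_{i,j}$ yields $2k$ isolated corner points of $\Sigma_{i,j}$. The one mild subtlety worth double-checking is that the pairing of the four half-branches of $\tilde{c}_i$ at $a$ into two branches of $\tilde{c}_{i,j}$ is consistent with the orientation convention used to define $X_{i,j}$; this can be read off from the local normal form at a transverse crossing already used in the proof of Lemma~\ref{l:keylemmasurfaces}, and confirms that the arc $\mathcal{A}_a$ is indeed traversed once and that the two corners are genuinely distinct.
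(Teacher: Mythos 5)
Your proof is correct and is exactly the ``by construction'' argument the paper has in mind: the paper states this lemma without proof as an immediate consequence of the definition of $N_1^*(X_{i,j})$ in \S\ref{sss:lift}, and your verification (two distinct endpoints of the fiber arc $\mathcal{A}_a$ by transversality of the double point, genuine corners because the fiber arc is vertical while the lifted branches have nonzero horizontal projection under $d\Pi$) fills in precisely the details left implicit. The only point worth keeping in mind is that near a corner the local model for $X_{i,j}$ may also be a union of three quadrants rather than one (as in the proof of Lemma~\ref{l:smoothcurvelemma}), but your argument applies verbatim in that case as well.
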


We denote by $\text{Sing}(\Sigma_{i,j})$ this finite subset of singular points. In terms of wavefront sets, this allows to give the simple upper bound:
\begin{multline*}
\forall j, \text{WF}\left([\Sigma_{1,j}] \right)\subset 
\\ 
\subset N^*\Sigma_1^{T_0}\cup \left(\bigcup_{a\in \text{Cross}(\tilde{c}_1)} N^*(S_a^*X)\right)\cup \bigcup_{j',b\in \text{Sing}(\Sigma_{1, j'})} T^*_b(S^*X)\setminus 0,
\end{multline*}
and the same for the part concerning $\Sigma_2=\Sigma_2^{-T_0'}$.
In order to smooth the curves $\tilde{c}_{i,j}$ near their singularities, we fix some conic neighborhood $\Gamma_i$ of 
$$N^*\Sigma_i\cup \left(\bigcup_{a\in \text{Cross}(\tilde{c}_i)} N^*(S_a^*X)\right)\cup \bigcup_{j, b\in \text{Sing}(\Sigma_{i,j})} T^*_b(S^*X)\setminus 0.$$
We begin by observing that
\begin{lemm}\label{r:transverse-WF}
There exists $\Gamma_1,\Gamma_2$ some closed conic subsets of $T^*(S^*X)\setminus\underline{0}$ s.t. $\Gamma_i$ is a conic neighborhood of 
$$N^*\Sigma_i\cup \left(\bigcup_{a\in \operatorname{Cross}(\tilde{c}_i)} N^*(S_a^*X)\right)\cup \bigcup_{j, b\in \operatorname{Sing}(\Sigma_{i,j})} T^*_b(S^*X)\setminus 0$$
and 
\begin{equation}
\Gamma_1\cap\Gamma_2=\emptyset.
\end{equation}
\end{lemm}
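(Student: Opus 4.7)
The plan is to reduce the statement to the purely geometric claim that the two closed conic sets
$$W_i := N^*\Sigma_i \cup \Bigl(\bigcup_{a\in \operatorname{Cross}(\tilde{c}_i)} N^*(S_a^*X)\Bigr) \cup \bigcup_{b\in \operatorname{Sing}(\Sigma_{i,j})} T^*_b(S^*X)\setminus 0, \qquad i=1,2,$$
have empty intersection inside $T^*(S^*X)\setminus\underline{0}$. Once $W_1\cap W_2=\emptyset$ is established, the standard trick of projecting to the compact cosphere bundle $(T^*(S^*X)\setminus\underline{0})/\IR_+$, separating the two closed disjoint images by disjoint open neighborhoods (by compactness and normality) and pulling back via the $\IR_+$-action, produces the required closed conic neighborhoods $\Gamma_1,\Gamma_2$.

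To prove $W_1\cap W_2=\emptyset$, I would first note that any element of $W_i$ has its basepoint in the union
$$B_i := \Sigma_i \cup \bigcup_{a\in \operatorname{Cross}(\tilde{c}_i)} S_a^*X \cup \operatorname{Sing}(\Sigma_{i,j}) \subset S^*X,$$
so it suffices to show $B_1\cap B_2=\emptyset$. Projecting down to $X$ via $\Pi:S^*X\to X$, one has $\Pi(B_i)\subset \tilde{c}_i$, since $\operatorname{Cross}(\tilde{c}_i)\subset \tilde{c}_i$ and each singular point of $\Sigma_{i,j}=N_1^*(X_{i,j})$ projects onto a crossing of $\tilde{c}_i$ by the construction recalled in~\S\ref{sss:lift}. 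Hence the only potential overlap of $B_1$ and $B_2$ lives in fibers $S_q^*X$ with $q\in \tilde{c}_1\cap \tilde{c}_2$.

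The main obstacle is treating these overlap fibers, and this is where the geometric input of Proposition~\ref{p:findingrightcurves} is essential. First, any such $q$ is neither a double point of $\tilde{c}_1$ nor of $\tilde{c}_2$, so above $q$ the set $B_i\cap S_q^*X$ reduces to the single covector $(\tilde{c}_i'(t)^\flat)^\perp$. The key extra ingredient is that, by choosing the deformation parameters $T_0,T_0'$ generically (as invoked in the discussion preceding the lemma), one can assume that $\tilde{c}_1$ and $\tilde{c}_2$ intersect transversally in $X$: this forces $\tilde{c}_1'(t)\neq \pm \tilde{c}_2'(s)$ whenever $\tilde{c}_1(t)=\tilde{c}_2(s)$, so the two unit conormal covectors are distinct in $S_q^*X$ and $\Sigma_1\cap\Sigma_2=\emptyset$ above $q$.

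All the other potential overlaps between the three pieces defining $B_1$ and $B_2$ are ruled out directly by Proposition~\ref{p:findingrightcurves}: a crossing of $\tilde{c}_1$ cannot lie on $\tilde{c}_2$, and vice versa, so the fibers $S_a^*X$ with $a\in\operatorname{Cross}(\tilde{c}_1)$ are disjoint both from $\Sigma_2$ and from all fibers $S_b^*X$ with $b\in\operatorname{Cross}(\tilde{c}_2)$ (a common crossing would be a double point of both curves); the remaining pairs involving $\operatorname{Sing}(\Sigma_{i,j})$ are handled identically, since these singular sets project onto $\operatorname{Cross}(\tilde{c}_i)$. This finishes the reduction $B_1\cap B_2=\emptyset$, hence $W_1\cap W_2=\emptyset$, and the lemma follows from the separation argument described in the first paragraph.
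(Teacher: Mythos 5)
Your proposal is correct and follows essentially the same route as the paper: reduce to disjointness of the base sets (the supports $\Sigma_i\cup\bigcup_{a\in\operatorname{Cross}(\tilde{c}_i)}S_a^*X$), deduce that disjointness from the properties of $\tilde{c}_1,\tilde{c}_2$ guaranteed by Proposition~\ref{p:findingrightcurves} (crossings of one curve avoid the other, transverse intersections so the conormal covectors over a common point differ), and then separate the two disjoint closed conic sets by conic neighborhoods. You merely spell out the fiberwise analysis and the cosphere-bundle separation step more explicitly than the paper does.
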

\begin{proof}
Thanks to the hypothesis following~\eqref{e:selfintersection},
the selfintersection points of $\tilde{c}_1$ do not meet $\tilde{c}_2$ 
and conversely the selfintersection points of $\tilde{c}_2$ do not meet $\tilde{c}_1$.
Moreover the intersection of both curves are transverse. 
It means that the following intersection is disjoint 
$$\left(\Sigma_1\cup \left(\bigcup_{a\in \text{Cross}(\tilde{c}_1)} S_a^*X\right) \right)\cap \left(\Sigma_2\cup \left(\bigcup_{a\in \text{Cross}(\tilde{c}_2)} S_a^*X\right) \right)=\emptyset.$$
Thus the union of supports $\cup_{j=1}^{N_1}\text{supp}([\Sigma_{1,j}]) $, $\cup_{j=1}^{N_2}\text{supp}([\Sigma_{2,j}]) $
are disjoint. Since the projection on $S^*X$ of the wave front set of a current in $\mathcal{D}^\prime(S^*X)$ is contained in the support of the current, this implies that one can choose $\Gamma_1,\Gamma_2$ with the expected properties.
\end{proof}

We now turn to the smoothing of our curves:

\begin{lemm}\label{l:smoothcurvelemma}
One can construct a family of smooth curves $(\tilde{c}_{i,j}^m)_{m\geqslant 1, i\in \{1,2\},1\leqslant j\leqslant N_i}$ on $X$ with the following properties:
\begin{itemize}
 \item for every $t$, $\|(\tilde{c}_{i,j}^m)'(t)\|=1$,
 \item $[\tilde{c}_{i,j}^m]$ converges weakly to $[\tilde{c}_{i,j}]$ in $\mathcal{D}^{\prime 1}(X)$,
 \item $[\tilde{c}_{i,j}^m]=[\tilde{c}_{i,j}]$ outside of some neighborhood (depending on $m$) of the singularities of $\tilde{c}_{i,j}$;
  \item one can attach above each point $\tilde{c}_{i,j}^m(t)$ of the curve, its normalized conormal vector $((\tilde{c}_{i,j}^m)'(t)^\flat)^\perp$ so that the closed curve $$t\mapsto (\tilde{c}_{i,j}^m(t),((\tilde{c}_{i,j}^m)'(t)^\flat)^\perp)$$ is smooth and if we denote by $\Sigma_{i,j,m}$ the image of this curve in $S^*X$, then one has $[\Sigma_{i,j,m}]=[\Sigma_{i,j}]$ away from the singularities and, as $m\rightarrow+\infty$,
 $$[\Sigma_{i,j,m}]\rightarrow[\Sigma_{i,j}],$$
 in $\mathcal{D}^{\prime 2}_{\Gamma_i}(S^*X)$.
\end{itemize}
\end{lemm}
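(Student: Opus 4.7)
The plan is to perform an independent smoothing at each singular point of $\tilde c_{i,j}$. First I would fix $i\in\{1,2\}$ and $1\leqslant j\leqslant N_i$ and enumerate the finitely many singular points $q_1,\ldots,q_k$ of $\tilde c_{i,j}$; at each $q_\ell$ two smooth arcs meet with well-defined but distinct one-sided unit tangents $(\tilde c_{i,j})'(t_\ell^\pm)$. Choose pairwise disjoint coordinate charts $U_\ell\ni q_\ell$. Inside each $U_\ell$ I would use a bump function of width $1/m$ (in arc length) to interpolate, inside a strip of length $2/m$ around $q_\ell$, between the two incoming tangent vectors, and then reparametrize by arc length. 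This produces a smooth, unit-speed closed curve $\tilde c_{i,j}^m$ that equals $\tilde c_{i,j}$ outside the union of these strips, yielding the first and third bullets directly. Weak convergence $[\tilde c_{i,j}^m]\to[\tilde c_{i,j}]$ in $\mathcal D^{\prime 1}(X)$ follows because the difference is supported on arcs of length $O(1/m)$ and is uniformly bounded in mass.

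The Legendrian lift is then canonical: $\Sigma_{i,j,m}$ is the image of the smooth, unit-speed map $t\mapsto(\tilde c_{i,j}^m(t),((\tilde c_{i,j}^m)'(t)^\flat)^\perp)$. Outside of $\Pi^{-1}(\bigcup_\ell U_\ell)$ the curve $\Sigma_{i,j,m}$ coincides with $\Sigma_{i,j}$, so the third bullet at the level of $S^*X$ is immediate, and weak convergence $[\Sigma_{i,j,m}]\to[\Sigma_{i,j}]$ in $\mathcal D^{\prime 2}(S^*X)$ follows by the same mass argument applied upstairs.

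The substantive point is upgrading this weak convergence to convergence in the Hörmander topology on $\mathcal D^{\prime 2}_{\Gamma_i}(S^*X)$. This requires two things: that $\operatorname{WF}([\Sigma_{i,j,m}])\subset\Gamma_i$ uniformly for $m$ large, and that for every properly supported pseudodifferential operator $A$ whose microsupport is disjoint from $\Gamma_i$, one has $A[\Sigma_{i,j,m}]\to A[\Sigma_{i,j}]$ in $\mathcal C^\infty(S^*X)$. For the first point, on the part of $\Sigma_{i,j,m}$ identical to $\Sigma_{i,j}$ there is nothing to do; on the smoothed strip above $U_\ell$, $\Sigma_{i,j,m}$ is a smooth arc, so its wavefront set is contained in its conormal bundle, whose base points lie in $\Pi^{-1}(U_\ell)$ and are therefore arbitrarily close to some singular point $b\in\operatorname{Sing}(\Sigma_{i,j})$. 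Since by Lemma~\ref{r:transverse-WF} the cone $\Gamma_i$ contains the full fiber $T_b^*(S^*X)\setminus\underline{0}$ in its interior, by continuity of the conormal map and compactness, the conormal covectors over the smoothed piece land in $\Gamma_i$ as soon as $m$ is large enough. The second point then follows at once: $A[\Sigma_{i,j,m}]$ is a smooth function since $\operatorname{WF}'(A)\cap\operatorname{WF}([\Sigma_{i,j,m}])=\emptyset$ uniformly, and $\mathcal C^\infty$-convergence to $A[\Sigma_{i,j}]$ can be verified locally by integrating the smooth kernel of $A$ against the (weakly convergent) currents, using the uniform microlocal confinement to control derivatives.

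The main obstacle will be making the uniform confinement $\operatorname{WF}([\Sigma_{i,j,m}])\subset\Gamma_i$ fully quantitative: one must calibrate the width of the smoothing strip so that both base points (remaining in $U_\ell$) and tangent directions of the smoothed curve (interpolating between the two one-sided tangents at $q_\ell$) stay inside a prescribed neighborhood of the singular fiber used to define $\Gamma_i$. Once this calibration is performed uniformly over the finitely many singular points, the rest of the proof reduces to standard arguments on the continuity of pseudodifferential actions on currents with controlled wavefront.
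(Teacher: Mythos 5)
Your overall strategy (smooth each corner locally, lift by the unit conormal, then upgrade weak convergence to convergence in $\mathcal{D}^{\prime 2}_{\Gamma_i}(S^*X)$ via uniform wavefront confinement) matches the paper's, but there is a genuine gap in your treatment of the lifted currents. The limit object $\Sigma_{i,j}=N_1^*(X_{i,j})$ is \emph{not} just the conormal lift of the smooth part of $\tilde{c}_{i,j}$: by the construction of \S\ref{sss:lift}, above each corner $q$ of $\tilde{c}_{i,j}$ it contains a whole arc of the fiber $S_q^*X$ (the inward-pointing covectors between the two one-sided conormals). Consequently the difference $[\Sigma_{i,j,m}]-[\Sigma_{i,j}]$ over $\Pi^{-1}(U_\ell)$ does \emph{not} have mass tending to $0$: the lifted smoothed arc has length bounded below, since its fiber component sweeps a fixed angle, and the limit current contains the vertical arc. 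So ``the same mass argument applied upstairs'' fails. What must actually be proved is that the conormal lift of your smoothed arc converges, as a current, to the two short horizontal pieces \emph{plus} the vertical arc over $q$, with the correct orientation and multiplicity. This is exactly the computation the paper performs with the explicit quarter-circle rounding $\tilde{\gamma}_m$ and its mollification, checking in the local model that the lift converges to $N^*(\{\tilde{q}_1\leqslant 0,\ \tilde{q}_2\geqslant 0\})$ \emph{including} the piece $\{(0,0;\cos\theta,\sin\theta):\theta\in[\pi/2,\pi]\}$. Your construction (interpolating the tangent direction over a strip of length $2/m$) can be made to work, but only after verifying that the interpolation sweeps precisely the prescribed arc of inward-pointing covectors; you should also treat the second local model $\{\tilde{q}_1\leqslant 0\}\cup\{\tilde{q}_2\geqslant 0\}$ arising in Lemma~\ref{l:keylemmasurfaces}.

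The same oversight affects your wavefront argument. You assert that the base points of $\operatorname{WF}([\Sigma_{i,j,m}])$ over the smoothed strip are arbitrarily close to some singular point $b\in\operatorname{Sing}(\Sigma_{i,j})$, so that the full fiber $T_b^*(S^*X)\setminus\underline{0}\subset\Gamma_i$ absorbs them. But the lifted smoothed arc does not collapse to a point: it converges in Hausdorff distance to the whole vertical arc inside $S_q^*X$, whose interior points are not in $\operatorname{Sing}(\Sigma_{i,j})$. The confinement still holds, but for a different reason: over the interior of that arc the conormal directions of $\Sigma_{i,j,m}$ converge to $N^*(S_q^*X)$, which is also contained in $\Gamma_i$ by Lemma~\ref{r:transverse-WF} (a conic neighborhood of $N^*\Sigma_i\cup\bigcup_a N^*(S_a^*X)\cup\bigcup_b T_b^*(S^*X)\setminus 0$). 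The estimate should therefore be split: near the two endpoints use the full fibers over $\operatorname{Sing}(\Sigma_{i,j})$, and over the interior of the arc use the neighborhood of $N^*(S_q^*X)$.
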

We refer to the Appendix~\ref{aa:topology} for a reminder on the topology of $\mathcal{D}^{\prime}_{\Gamma_i}(S^*X)$. For the sake of exposition, we postpone the proof of this technical statement to Appendix~\ref{ss:proof-smoothing}. We underline that this Lemma shows that our piecewise Legendrian knots can be approximated by $\ml{C}^\infty$ Legendrian knots while keeping the same wavefront properties on the knots. Among other things, it will ensure that the product is well defined and that we can pass to the limit by sequential continuity of the product of currents with disjoint wavefront sets. See Appendix~\ref{a:WF}.

 By construction, we also remark that for every $m$ large enough, the curve $\tilde{c}_{i,j}^m$ bounds a compact surface $X_{i,j}^m$ with smooth boundary which has the same topology as $X_{i,j}$. In particular, for $m$ large enough, for $i\in\{1,2\}$ and for $1\leqslant j\leqslant N_i$
 \begin{equation}\label{e:euler-smooth}
  \chi(X_{i,j})=\chi(X_{i,j}^m).
 \end{equation}
Similarly, recall that the singularities of $\partial X_{1,j}$ and $\partial X_{2,j'}$ are away from each other by construction of $\tilde{c}_1$ and $\tilde{c}_2$. Thus, one finds that for $m,m'$ large enough, for $1\leqslant j\leqslant N_1$ and for $1\leqslant j'\leqslant N_2$,
 \begin{equation}\label{e:euler-smooth2}
  \chi(X_{1,j}\cap X_{2,j'})=\chi(X_{1,j}^m\cap X_{2,j'}^{m'})\quad\text{and}\quad \chi(\partial X_{1,j}\cap \partial  X_{2,j'})=\chi(\partial X_{1,j}^m\cap \partial X_{2,j'}^{m'}).
 \end{equation}

\subsection{Proof of Theorem~\ref{t:linking-integer-microlocal}}\label{sss:reformulation} We now come back to our computation of the linking number $\mathbf{L}(c_1,c_2)$.

Using the above regularization together with the continuity property of the wedge product on $\mathcal{D}^\prime_{\Gamma}(M)$ -- see Appendix~\ref{aa:product}, 
we obtain 
\begin{equation}\label{e:decomposition-linking}\mathbf{L}(c_1,c_2)=\sum_{j=1}^{N_1}\sum_{j'=1}^{N_2}\lim_{m\rightarrow+\infty}\lim_{m'\rightarrow+\infty}\int_{S^*X} [\Sigma_{1,j,m}]\wedge R_{2,j',m'},\end{equation}
where $[\Sigma_{2,j',m'}]=dR_{2,j',m'}$ with $R_{2,j',m'}\in\mathcal{D}^{\prime 1}_{\Gamma_2}(M)$. Hence, we are left with the computation of
$$\mathbf{L}\left(\tilde{c}_{1,j}^m,\tilde{c}_{2,j'}^{m'}\right):=\int_{S^*X} [\Sigma_{1,j,m}]\wedge R_{2,j',m'},$$
for every $(j,j')$ and for every $m,m'\geqslant 1$ large enough. Now, the curves $\tilde{c}_{1,j}^m$ and $\tilde{c}_{2,j'}^{m'}$ are simple, smooth and homologically trivial. Hence, we can apply the results of \S\ref{s:morse} together with~\eqref{e:euler-smooth} and~\eqref{e:euler-smooth2} to derive that
$$\mathbf{L}\left(\tilde{c}_{1,j}^m,\tilde{c}_{2,j'}^{m'}\right)=-\frac{\chi\left(X_{1,j}\right)\chi\left( X_{2,j'}\right)}{\chi(X)}\chi\left(X_{1,j}\cap X_{2,j'}\right)-\frac{1}{2}\chi\left(\partial X_{1,j}\cap\partial X_{2,j'}\right),$$
Summing over $j$ and $j'$, one finds by definition that
$$\mathbf{L}(c_1,c_2)=-\frac{\chi(f_1)\chi(f_2)}{\chi(X)}+\chi(f_1f_2)-\frac{1}{2}\chi\left(\mathbf{1}_{\tilde{c}_1\cap\tilde{c}_2}\right).$$

\subsection{Relation with microlocal index formulas}\label{ss:microlocal-symplectic}

Our derivation of the topological content of $\mathcal{N}_\infty(c_1,c_2,0)$ relied crucially on the \emph{Poincar\'e-Hopf index formula} as it was derived by Morse in~\cite{Mor29}. In the present section, we used this formula from a point of view which is inspired by microlocal geometry. In fact, the microlocal index theorems of Brylinski--Dubson--Kashiwara~\cite{BDK} and Kashiwara~\cite{K85}, later revisited by Kashiwara--Schapira~\cite[p.~384]{KS} and Grinberg--McPherson~\cite{GrMcP},
can be understood as generalizations of the Poincar\'e--Hopf index formula. As the comparison is relevant here, we briefly explain their content following the presentation of~\cite{GrMcP} to which we refer for more details. 

First, given a real algebraic manifold $X$ and a stratification $\mathcal{S}$ of $X$, one says that a function $f:X\mapsto \mathbb{Z}$ is constructible if it is constant on each stratum and the notion of Euler characteristic generalizes to constructible functions~\cite{Viro, Scha1, Scha2, Scha3}, $f\mapsto \chi(f):=\int_Xfd\chi,$ 
Now, given any stratum $S$ of $\mathcal{S}$, one can define its conormal bundle which is a Lagrangian submanifold $\Lambda_S\subset T^*X$. Then, the Lagrangian cycle $\text{Ch}(f)$ of $f$ is defined by assigning to each Lagrangian submanifold $\Lambda_S$ its multiplicity which roughly speaking is the value of $f$ on $S$ -- see~\cite[p.~277]{GrMcP} for details. 
Then, for every pair $f_1,f_2$ of constructible functions on $X$ which satisfy some appropriate transversality conditions, the microlocal index formula reads~\cite[p.~269]{GrMcP}:
\begin{equation}
\boxed{ \underset{\text{Euler integral}}{\underbrace{ \chi(f_1f_2)}}= \underset{\text{Lagrangian intersection}}{\underbrace{ [\text{Ch}(f_1)]\cap [\text{Ch}(f_2)]}}}
\end{equation}
where $[\text{Ch}(f_1)]\cap [\text{Ch}(f_2)]$ is the intersection of the two corresponding Lagrangian cycles. Hence, the microlocal index formula gives an interpretation of Lagrangian intersections as the Euler characteristic of some product of constructible functions.

As we saw when proving Theorem~\ref{t:linking-integer-microlocal}, we derived a formula in the spirit of the above microlocal index formula. Instead of computing the intersection of Lagrangian cycles, we rather considered the linking of Legendrian cycles but we also expressed it in terms of constructible functions. More precisely, for every pair of Legendrian cycles $\Sigma_1,\Sigma_2$ which are small deformations by Hamiltonian isotopies of the unit conormal bundle of our homologically trivial geodesics $c_1$ and $c_2$,  we associated a pair $(f_1,f_2)$ of constructible functions quantizing the two knots $\Sigma_1,\Sigma_2$. In that respect, Theorem~\ref{t:linking-integer-microlocal} can be viewed as a microlocal index formula:
\begin{eqnarray*}
\underset{\text{Euler integral}}{\underbrace{ \frac{\chi(f_1)\chi(f_2)}{\chi(X)} - \chi(f_1f_2)+\frac{1}{2}\chi(\mathbf{1}_{c_1\cap c_2})}}&=& \underset{\text{Legendrian linking}}{\underbrace{\pm \mathbf{Lk}\left( \Sigma_1,\Sigma_2 \right) }}\\
&=&\underset{\text{Poincar\'e series at zero}}{ \underbrace{ \lim_{s\rightarrow 0}\sum_{\gamma\in\mathcal{P}_{c_1,c_2}:\ell(\gamma)>0} e^{-\ell(\gamma)s} }}  .
\end{eqnarray*}
In the framework of symplectic topology, the Poincar\'e series is understood as a sum over the Reeb chords of the geodesic flow joining the two Legendrian curves $\Sigma_1$ and $\Sigma_2$. Hence, this index formula, which seems to be new\footnote{However see~\cite[Th.4]{Tu} and~\cite[Eq.~(10)]{Poly} for related results of Turaev regarding the first equality on $S^*\IS^2$.}, gives an interpretation of some linking of two Legendrian curves in terms of Euler integrals but also as a zeta regularized sum over the Reeb chords from $\Sigma_1$ to $\Sigma_2$. While the first equality is obtained by purely topological means, the second one is a consequence of our spectral approach to the problem. In fact, we conjecture that the first equality in this index-type formula should generalize to more general Legendrian knots and also to higher dimensional Legendrian boundaries for the appropriate notion of linking between higher dimensional objects. The generalization of the second equality is more subtle and it is related to the structure of Pollicott-Ruelle resonant states at $0$ as we already discussed in the introduction.

\appendix

\section{A brief reminder on the wavefront set of a distribution}\label{a:WF}

In this appendix, we briefly recall the notion of the wavefront set of a distribution and collect some classical properties that were used all along this article. The presentation is close to~\cite{DyZw13, BrDaHe16, DaRi17d} to which we refer for more informations and references.

The space $\ml{D}^{\prime k}_{\Gamma}(M)$ denotes the currents of degree $0\leqslant k\leqslant n=\text{dim}(M)$ whose wavefront set is contained in a fixed closed conic set $\Gamma \subset T^*M\setminus \underline{0}$, with $\underline{0}$ denoting the zero section. Recall first that an element in $\ml{D}^{\prime k}_{\Gamma}(M)$ is a current $u$ of degree $k$ such that, for every $N\geqslant 1$, for every open set $U$, for every closed cone $C$ such that
$\left(\text{supp }\chi\times C\right)\cap \Gamma=\emptyset$, one has
\begin{eqnarray}
\Vert u\Vert_{N,C,\chi,\alpha,U}:= \Vert (1+\|\xi\|)^{N} \ml{F}(u_\alpha\chi)(\xi) \Vert_{L^\infty(C)}<+\infty,
\end{eqnarray}
where $\chi$ is supported on the chart $U$, where $u=\sum_{\vert\alpha\vert=k} u_\alpha dx^\alpha$ where $\alpha$ is a multi--index and where $\ml{F}$ is the Fourier transform computed in the local chart $U$. Given a smooth, closed, embedded, oriented submanifold $\Sigma$ of dimension $n-k$ inside $M$, one can verify that the current of integration $[\Sigma]$ over $\Sigma$, defined as
$$\forall\psi\in\Omega^{n-k}(M),\quad\langle [\Sigma],\psi\rangle=\int_\Sigma\psi,$$
is an element in $\mathcal{D}^{\prime k}_{N^*(\Sigma)}(M)$, where
$$N^*(\Sigma):=\left\{(x,\xi)\in T^*M\setminus\underline{0}: x\in\Sigma\ \text{and}\ \forall v\in T_x\Sigma,\ \xi(v)=0\right\}.$$

\begin{rema}
 For a current $u$ of degree $k$, the wavefront set of $u$, denoted by $\text{WF}(u)$, is the smallest conic cone $\Gamma$ such that $u\in\mathcal{D}^{\prime k}_{\Gamma}(M)$.
\end{rema}


\subsection{Topology on the space $\mathcal{D}_{\Gamma}^\prime(M)$}\label{aa:topology}

Let us first recall the notion of bounded subsets in $\mathcal{D}^{\prime k}(M)$ following~\cite[Ch.~3, p.~72]{Schwartz-66}:
\begin{def1}\label{d:bounded}
A subset $B$ of currents is bounded if, for every test form $\varphi\in \Omega^{n-k}(M)$, 
$\sup_{t\in B}\vert \langle t,\varphi \rangle  \vert<+\infty$. 
\end{def1}
This definition is often referred as weak boundedness and it is equivalent to the notion of boundedness induced by the strong topology on $\mathcal{D}^{\prime k}(M)$~\cite[Ch.~3]{Schwartz-66}. We note that this is equivalent to $B$ being bounded in some 
Sobolev space $H^s(M,\Lambda^k(T^*M))$ of currents by suitable application of the uniform boundedness principle~\cite[\S~5, Lemma~23]{DabBr14}. We can now define the normal topology in the space of currents essentially following~\cite[Sect.~3]{BrDaHe16}:
\begin{def1}[Normal topology on the space of currents] For every closed conic subset $\Gamma\subset T^*M\setminus \underline{0}$,
the topology of $\mathcal{D}^{\prime k}_{\Gamma}(M)$ is 
defined as the weakest topology which makes continuous the seminorms of the strong topology of $\mathcal{D}^{\prime k}(M)$ and
the seminorms:
\begin{eqnarray}
\Vert u\Vert_{N,C,\chi,\alpha,U}= \Vert (1+\|\xi\|)^{N} \ml{F}(u_\alpha\chi)(\xi) \Vert_{L^\infty(C)}
\end{eqnarray}
where $\chi$ is supported on some chart $U$, where $u=\sum_{\vert\alpha\vert=k} u_\alpha dx^\alpha$ where $\alpha$ is a multi--index, where $\ml{F}$ is the Fourier transform computed in the local chart
and $C$ is a closed cone such that
$\left(\text{supp }\chi\times C\right)\cap \Gamma=\emptyset$. A subset $B\subset \mathcal{D}^{\prime k}_\Gamma$ is called bounded
in $\mathcal{D}^{\prime k}_{\Gamma}$ if it is bounded in $\mathcal{D}^{\prime k}$ and if all seminorms 
$\Vert .\Vert_{N,C,\chi,\alpha,U}$ are bounded on $B$.
\end{def1}

We emphasize that this definition is given purely 
in terms of local charts without loss of generality. The above topology 
is in fact \emph{intrinsic as a consequence of the continuity of the
pull--back}~\cite[Prop 5.1 p.~211]{BrDaHe16} as emphasized by H\"ormander~\cite[p.~265]{Ho90} (see below for a brief reminder). 
Note that it is the same to consider currents or distributions
when we define the relevant topologies since currents are just elements of the form
$\sum u_{i_1,\dots,i_k} dx^{i_1}\wedge \dots \wedge dx^{i_k}$ in local coordinates
$(x^1,\dots,x^n)$ where the coefficients $u_{i_1,\dots,i_k}$ are distributions. We note that the above seminorms involve the $L^{\infty}$ norm while the anisotropic spaces we deal with in this article are built from $L^2$ norms. This problem is handled by~\cite[App.~B]{DaRi17d}.
Let us now discuss some of the properties of the space $\mathcal{D}^{\prime k}_{\Gamma}(M)$ under standard operations: product, pullback, pushforward.

\subsection{Product of currents}\label{aa:product}

Given two closed conic sets $(\Gamma_1,\Gamma_2)$ which have empty intersection, the usual wedge product of smooth forms
$$\wedge:(\varphi_1,\varphi_2)\in \Omega^k(M) \times \Omega^l(M) \longmapsto \varphi_1\wedge\varphi_2\in \Omega^{k+l}(M)$$
extends uniquely as a hypocontinuous map for the normal topology~\cite[Th.~6.1]{BrDaHe16} 
$$\wedge:(\varphi_1,\varphi_2)\in \ml{D}^{\prime k}_{\Gamma_1}(M) \times \ml{D}^{\prime l}_{\Gamma_2}(M) \longmapsto \varphi_1\wedge
\varphi_2 \in\mathcal{D}^{\prime k+l}_{s(\Gamma_1,\Gamma_2)}(M),$$
with $s(\Gamma_1,\Gamma_2)=\Gamma_1\cup \Gamma_2\cup (\Gamma_1+\Gamma_2)$.
The notion of hypocontinuity is a strong notion of 
continuity adapted to bilinear maps from $E\times F\mapsto G$ where $E,F,G$ are locally convex spaces~\cite[p.~204-205]{BrDaHe16}.
It is weaker than joint continuity but implies that the bilinear map is separately continuous in each factor uniformly in the other factor
in a bounded subset\footnote{The tensor product of distributions for the strong topology is hypocontinuous but not continuous~\cite[p.~205]{BrDaHe16}}.

\subsection{Pullback of currents}\label{aa:pullback}

Let $\Gamma$ be a closed conic set and let $f$ be a smooth \emph{diffeomorphism} on $M$. The usual pullback operation on smooth forms,
$$f^*:\Omega^k(M)\rightarrow \Omega^k(M)$$
extends uniquely as a continuous map~\cite[Prop.~5.1]{BrDaHe16} from $\mathcal{D}^{\prime k}_\Gamma(M)$ to $\mathcal{D}^{\prime k}_{f^*\Gamma}(M)$ for the normal topology, with $f^*\Gamma$ defined as
$$f^*\Gamma:=\left\{\left(f^{-1}(x),(df^{-1}(x)^T)^{-1}\xi\right)\in T^*M\setminus\underline{0}:\ (x,\xi)\in\Gamma\right\}.$$

\subsection{Pushforward of currents}\label{aa:pushforward}
Let $\Gamma$ be a closed conic set and let $f:M\rightarrow N$ be a smooth map between the smooth, compact, boundaryless manifolds $M$ and $N$.
The usual pushforward operation on smooth forms,
$$f_*:\Omega^k(M)\rightarrow \Omega^k(M)$$
extends uniquely as a continuous map~\cite[Th.~6.3]{BrDaHe16} from $\mathcal{D}^{\prime k}_\Gamma(M)$ to $\mathcal{D}^{\prime k}_{f_*\Gamma}(M)$ for the normal topology, with $f_*\Gamma$ defined as
$$f_*\Gamma:=\left\{\left(y,\eta\right)\in T^*M\setminus\underline{0}:\ \exists(x,\xi)\in\Gamma\cup\underline{0}\ \text{s.t.}\ f(x)=y\ \text{and}\ \xi=df(x)^T\eta\right\}.$$

\section{The case of Anosov flows}\label{a:anosov}
The proof that we gave of Theorem~\ref{t:meromorphic} can in fact be easily adapted in the more general context of Anosov flows and of weighted Poincar\'e series. In order to state this more general result, we fix $\tilde{M}$ to be a smooth, closed, Riemannian, oriented manifold of dimension $\geq 3$. Recall that an Anosov flow is a flow $\varphi^t$ satisfying property~\eqref{e:Anosov} or more precisely its extension in higher dimension, i.e. $\text{dim}\ E_u$ and $\text{dim}\ E_s$ may be $\geq 1$ and of different dimensions. Note also that the Margulis transversality assumptions~\eqref{e:transversality-unstable} and~\eqref{e:transversality-stable} can also be extended to that set-up. Hence, we fix $\Sigma_1$ and $\Sigma_2$ to be two submanifolds verifying these assumptions and some large $T_0>0$ to ensure that $\sharp \varphi^{-T_0}(\Sigma_1)\cap\varphi^{t+T_0}(\Sigma_2)$ is finite for every $t\geqslant 0$. See Lemma~\ref{l:transversality} for the proof in the case of geodesic flows which in fact remains valid in this generalized framework. Given $W\in\mathcal{C}^\infty(\tilde{M},\IC)$, we can then define the following zeta function
\begin{multline*}\tilde{\zeta}_{\Sigma_1,\Sigma_2}(z)\\:=\sum_{t\geqslant 0: \varphi^{-T_0}(\Sigma_1)\cap\varphi^{t+T_0}(\Sigma_2)\neq\emptyset}e^{-zt}\left(\sum_{x\in \varphi^{-T_0}(\Sigma_1)\cap\varphi^{t+T_0}(\Sigma_2)}\epsilon_t(x)e^{-\int_{-t}^0 W\circ\varphi^{s}(x)|ds|}\right),
\end{multline*}
where $\epsilon_t(x)=1$ if 
$$d_{\varphi^{T_0}(x)}\varphi^{-T_0}\left(T_{\varphi^{T_0}(x)}\Sigma_1\right)\oplus\IR V(x)\oplus  d_{\varphi^{-T_0-t}(x)}\varphi^{T_0+t}\left(T_{\varphi^{-T_0-t}(x)}\Sigma_2\right)$$
has the same orientation as $T_x\tilde{M},$ and to $-1$ otherwise. Thanks to Lemma~\ref{l:exp-growth}, the function $\zeta_{\Sigma_1,\Sigma_2}$ is well defined and holomorphic for $\text{Re}(z)\gg 1$. The extension of Theorem~\ref{t:meromorphic} to Anosov flows and to weighted Poincar\'e series reads as follows:
\begin{theo} Let $W\in\ml{C}^\infty(\tilde{M},\IC)$ and let $\Sigma_1$ and $\Sigma_2$ verifying~\eqref{e:transversality-unstable} and~\eqref{e:transversality-stable}. Then, there exists $T_0,C_0>0$ such that 
$$\tilde{\zeta}_{\Sigma_1,\Sigma_2}:\{\operatorname{Re}(s)\geq C_0\}\rightarrow\IC$$
is well defined and holomorphic. Moreover, it extends meromorphically to $\IC$. 
\end{theo}
The proof of Theorem~\ref{t:meromorphic} applies directly to treat the case of this more general Theorem provided that we replace $\mathcal{L}_V$ by $\mathcal{L}_V+W$ to handle the exponential weights in the sum.

\section{Linking of closed geodesics}\label{ss:ghys}

When $\mathbf{c}_1$ and $\mathbf{c}_2$ are both nontrivial in $\pi_1(X)$, there are other natural curves in $S^*X$ that one may associate to $c_i:\IR/\ell_i\IZ\rightarrow X$:
$$\Sigma_{\text{geod}}(c_i):=\left\{(c_i(t),c_i'(t)^\flat):\ t\in\IR/\ell_i\IZ\right\}.$$
This is just the closed geodesic lifting $c_i$ in $S^*X$. It is then natural to ask whether the linking number $\mathbf{L}(c_1,c_2)$ is related to the linking number of $\Sigma_{\text{geod}}(c_1)$ and $\Sigma_{\text{geod}}(c_2)$ and this is indeed the case as we will establish. Here for simplicity, we always suppose that $c_1\neq c_2$ so that $\Sigma(c_1)\cap\Sigma(c_2)=\emptyset$. First of all, we can define using the conventions of Section~\ref{a:geometry} the following diffeomorphism of $S^*X$:
$$\mathcal{R}:x=(q,p)\in S^*X\mapsto (q,p^{\perp})\in S^*X.$$
This map is orientation-preserving as it is isotopic to the identity and, for $i=1,2$, one has
$$[\Sigma(c_i)]=\mathcal{R}^{-1*}[\Sigma_{\text{geod}}(c_i)].$$
In particular, $[\Sigma_{\text{geod}}(c_i)]$ is de Rham exact when $\mathbf{c}_i$ is homologically trivial as $\Sigma(c_i)$ is. Using the conventions of Proposition~\ref{p:value-at-0}, one has 
$$\mathbf{L}(c_1,c_2)=\int_{S^*X}[\Sigma(c_1)]\wedge R_2,$$
where $[\Sigma(c_2)]=dR_2$. Hence, using that $\mathcal{R}$ is orientation preserving and the continuity of the wedge product of currents whose wave front sets are transverse (see appendix~\ref{a:WF}), we can deduce that
$$\mathbf{L}(c_1,c_2)=\int_{S^*X}\mathcal{R}^{*}[\Sigma(c_1)]\wedge \mathcal{R}^{*}R_2=\int_{S^*X}[\Sigma_{\text{geod}}(c_1)]\wedge \mathcal{R}^{*}R_2,$$
where $[\Sigma_{\text{geod}}(c_2)]=d\left(\mathcal{R}^{*}R_2\right)$ (as $d$ commutes with $\mathcal{R}^*$). In other words, for nontrivial homotopy classes, the linking number $\mathbf{L}(c_1,c_2)$ we have been computing in this section is equal to the linking number of the geodesic curves lifting $c_1$ and $c_2$. Hence, we can reformulate Proposition~\ref{p:value-at-0} as follows:
\begin{theo}\label{t:ghyslike} Suppose that $\mathbf{c}_i$ are both nontrivial homotopy classes which are homologically trivial and distinct. 

Then $\mathcal{N}_\infty(c_1,c_2,0)=\mathbf{L}(c_1,c_2)$ is the linking number of the two closed geodesics $\Sigma_{\text{geod}}(c_1)$ and $\Sigma_{\text{geod}}(c_2)$ which lift $c_1$ and $c_2$ to $S^*X$. 


\end{theo}
In particular, this establishes a direct connection with the works of Duke--Imamo\={g}lu--T\'oth who expressed the linking number of two closed geodesics on the modular surface as the special value of a certain Dirichlet series~\cite{DIT}. Similarly, for suspension of toral automorphisms, the linking of closed orbits was identified with the special value of certain $L$-functions by Bergeron--Charollois--Garcia--Venkatesh~\cite{Ber18}.

\section{Proof of Lemma~\ref{l:smoothcurvelemma}}\label{ss:proof-smoothing} We finally prove the technical Lemma that was needed to smooth our piecewise smooth curves in Section~\ref{s:intersection}.
Up to the fact that we may have to reparametrize the curve (and up to using a partition of unity), we note that we only need to modify the curve $\tilde{c}_{i,j}$ in a small neighborhood of its singularities.
The point is that we will round the ``corners'' of the bounding surface $X_{i,j}$ to make the curve $\tilde{c}_{i,j}^m$ smooth.

 Thanks to assumption~\eqref{e:selfintersection}, we note that, in a small chart $(\tilde{q}_1,\tilde{q}_2)$ near a point $q_0$ at some corner of $X_{i,j}$, $X_{i,j}$ is given by $\{\tilde{q}_1\leqslant 0,\tilde{q}_2\geqslant 0 \}$ or $\{\tilde{q}_1\leqslant 0\}\cup\{\tilde{q}_2\geqslant 0\}$ (with the usual orientations on $\mathbb{R}^2$). We only discuss the first case and the second case is 
treated in a similar way.
In this local chart, the boundary of $X_{i,j}$ near the singular point has a local piecewise smooth parametrization which reads 
$$t\in [-1,1]\mapsto \gamma(t)= \left(-t\mathbf{1}_{[-1,0]}(t),0\right)+\left(0,t\mathbf{1}_{[0,1]}(t)\right)\in \mathbb{R}^2.$$ 
Observe that 
\begin{eqnarray*}\tilde{\gamma}_m(t)&:=& \left((t\mathbf{1}_{[-1,\frac{-1}{m}]}(t),0\right)+\left(0,t\mathbf{1}_{[\frac{1}{m},1]}(t)\right)\\
&+&\left(\frac{1}{m}\cos\left(\frac{m\pi t}{4}-\frac{\pi}{4}\right)-\frac{1}{m} ,\frac{1}{m}\sin\left(\frac{m\pi t}{4}-\frac{\pi}{4}\right)+\frac{1}{m} \right)\mathbf{1}_{[-\frac{1}{m},\frac{1}{m}]}(t)            
\end{eqnarray*}
is a $\mathcal{C}^1$--path, which is only piecewise $\mathcal{C}^\infty$, and lies in  
some $\frac{1}{m}$-neighborhood of $\gamma$. Hence
$\tilde{\gamma}_m$ bounds the domain 
$$\left\{\tilde{q}_1\leqslant -\frac{1}{m},\tilde{q}_2\geqslant 0 \right\}\cup\left\{\tilde{q}_2\geqslant \frac{1}{m}, \tilde{q}_1\leqslant 0\right\}\cup\left\{ \left(\tilde{q}_1+\frac{1}{m}\right)^2+\left(\tilde{q}_2-\frac{1}{m}\right)^2\leqslant \frac{1}{m^2} \right\}$$
which has $\mathcal{C}^1$ boundary. Hence, instead of the corner point $\{(0,0)\}$, we obtained a quarter circle. 
Now we fix $\chi\in C^\infty(\mathbb{R})$ such that $ \int_\mathbb{R}\chi=1, \chi\geqslant 0, \text{supp}(\chi)\subset [-\frac{1}{2},\frac{1}{2}]$ and we define $\chi_m(\tilde{q}_1,\tilde{q}_2)=\frac{1}{m^2}\chi(m\tilde{q}_1,m\tilde{q}_2)$. 
We can define the new parametrization $\gamma_m=\tilde{\gamma}_m*\chi_m\in \mathbb{R}^2 $
obtained by convolution. This new curve $\gamma_m$ converges to $\tilde{\gamma}_m$ in the $\mathcal{C}^1$-topology and the image of both curves coincide outside some $\frac{4}{m}$--neighborhood of the corner point $(0,0)$.

Define $X_{i,j}^m$ to be the new surface obtained from the above smoothing procedure at every corner point, this is a manifold with smooth boundary which is homotopic to $\partial X_{i,j}$ by construction.
Proceeding like this, one can verify that the first three properties are satisfied locally near the singular point (and thus globally via a partition of unity).

Regarding now the last property, we are in fact taking the (oriented and normalized) conormal to the curve $t\mapsto \tilde{c}_{i,j}^m(t)$. By construction, it has the expected properties away from the singularities of the initial curve. 
Near the singularity, one can write the above expression in local coordinates in $\mathbb{R}^2$ as above and verify that the current of integration along the curve
$$t\in [-1,1]\longmapsto 
\left(\gamma_m(t),\frac{(\gamma_m'(t)^\flat)^\perp}{\Vert((\gamma_m'(t)^\flat)^\perp\Vert } \right)$$ converges to the current of integration along
\begin{eqnarray*}N^*_1(\{\tilde{q}_1\leqslant 0,\tilde{q}_1\geqslant 0  \}) &=& \left\{(t,0;0,1); t\in (-1,0]  \right\}\cup \left\{(0,t;-1,0); t\in [0,1) \right\}\\
&\cup& \left\{(0,0;\cos(\theta),\sin(\theta)); \theta\in [\frac{\pi}{2},\pi] \right\}.
 \end{eqnarray*}
The only discussion is near the corner point. By construction, we see that the conormal lift of the $\mathcal{C}^1$-curve $\tilde{\gamma}_m$ which is the map $$ t\in [-1,1]\longmapsto \left(\tilde{\gamma}_m(t),\frac{(\tilde{\gamma}_m'(t)^\flat)^\perp}{\Vert((\tilde{\gamma}_m'(t)^\flat)^\perp\Vert } \right) $$
converges in the sense of currents to the 
conormal $N^*_{ 1}(\{\tilde{q}_1\leqslant 0,\tilde{q}_2\geqslant 0  \})$. Since $\gamma_m$ is $\mathcal{C}^1$ close to $\tilde{\gamma}_m$ for $m$ large enough, we are done.

\end{document}